\newtheorem{Definition}{Definition}[section]
\newtheorem{Proposition}{Proposition}[section]
\newtheorem{Lemma}{Lemma}[section]
\newtheorem{Remark}{Remark}[section]
\numberwithin{equation}{section}
\def\Lc{{\cal L}}
\def\Cc{{\cal C}}
\def\Fc{{\cal F}}
\def\Pc{{\cal P}}
\def\Uc{{\cal U}}
\def \E{\mathbb{E}}
\def \L{\mathbb{L}}
\def \P{\mathbb{P}}
\def \R{\mathbb{R}}
\def\1{{\bf 1}}
\def \trans{^{\scriptscriptstyle{\intercal}}}
\def\*{\times}
\newtheorem{theorem}{Theorem}[section]
\newtheorem{lemma}[theorem]{Lemma}
\theoremstyle{definition}
   \theoremstyle{remark}
\newtheorem{remark}{Remark}
\numberwithin{remark}{section}
\DeclareMathAlphabet{\mathpzc}{OT1}{pzc}{m}{it}
\title{On the convergence of the Euler-Maruyama scheme for McKean-Vlasov SDEs}
\author{
Noufel Frikha\thanks{
Universit\'e Paris 1 Panth\'eon-Sorbonne, Centre d'Economie de la Sorbonne (CES), 106 Boulevard de l’H\^opital, 75642 Paris Cedex 13
({\tt noufel.frikha@univ-paris1.fr}).
},
Xuanye Song\thanks{Universit\'e Paris Cit\'e, CNRS, Laboratoire de Probabilit\'es, Statistique et Mod\'elisation, 75013 Paris ({\tt xsong@lpsm.paris}).},
}
\date{\today}
\def \d{\textnormal{d}}
\def\P{\mathbb{P}}
\renewcommand\@makefnmark{\hbox{\@textsuperscript{\normalfont\color{blue}\@thefnmark}}}
\renewcommand\@makefntext[1]{%
 \parindent 1em\noindent
      \hb@xt@1.8em{%
        \hss\@textsuperscript{\normalfont\@thefnmark}}#1}
\begin{document}

\maketitle

\begin{abstract}
Building on the well-posedness of the backward Kolmogorov partial differential equation in the Wasserstein space, we analyze the strong and weak convergence rates for approximating the unique solution of a class of McKean–Vlasov stochastic differential equations via the Euler–Maruyama time discretization scheme applied to the associated system of interacting particles. We consider two distinct settings. In the first, the coefficients and test function are irregular, but the diffusion coefficient remains non-degenerate. Leveraging the smoothing properties of the underlying heat kernel, we establish the strong and weak convergence rates of the scheme in terms of the number of particles $N$ and the mesh size $h$. In the second setting, where both the coefficients and the test function are smooth, we demonstrate that the weak error rate at the level of the semigroup is optimal, achieving an error of order $N^{-1} + h$.  

\bigskip\noindent
\textbf{\small Keywords.}
McKean-Vlasov SDEs, strong convergence, weak convergence,
Euler-Maruyama scheme, system of particles.

\bigskip\noindent
\textbf{\small MSC.}
65C05, 62L20, 62G32, 91Gxx.
 \end{abstract}

\section{Introduction}
Given the probability space $(\Omega,\Fc,\Pc)$, in the current paper, we are interested in the numerical approximation of some non-linear stochastic differential equations (SDEs for short) in the sense of McKean-Vlasov with dynamics
\begin{equation}
\label{SDE:MCKEAN}
X^{\xi}_t = \xi + \int_0^t b(s, X^{\xi}_s, \mathcal{L}(X^{\xi}_s)) \, \d s + \int_0^t \sigma(s, X^{\xi}_s, \mathcal{L}(X^{\xi}_s)) \,  \d W_s, \quad \mathcal{L}(\xi)=\textnormal{Law}(\xi)= \mu \in \mathcal{P}_2(\mathbb{R}^d),
\end{equation}

\noindent where $\xi$ is an $\mathbb{R}^d$-valued random variable which is independent of the $q$-dimensional Brownian motion $W=(W^1,\cdots, W^q)$ and with drift coefficient $b: \mathbb{R}_+ \times \mathbb{R}^d \times \mathcal{P}(\mathbb{R}^d)\rightarrow \mathbb{R}^d$ and diffusion coefficient $\sigma: \mathbb{R}_+ \times \mathbb{R}^d \times \mathcal{P}(\mathbb{R}^d) \rightarrow \mathbb{R}^{d\times q}$, which are both measurable mappings. Here and throughout the paper, $\mathcal{L}(\theta)$ stands for the law of the random variable $\theta$. The well-posedness in the weak or strong sense of the non-linear SDE \eqref{SDE:MCKEAN} has been investigated in numerous works such as G\"artner \cite{gartner}, Sznitman \cite{Sznitman}, Jourdain \cite{jourdain:1997}, Li and Min \cite{Li:min:2},  Mishura and Veretenikov \cite{mishura:veretenikov}, Hammersley et al. \cite{HSSzpruch:18}, Lacker \cite{la:18} and Chaudru de Raynal and Frikha \cite{CHAUDRUDERAYNAL20221} for a short and incomplete sample. 

The standard approximation of the above mean-field SDE is given by the system of $N$ particles $\left\{(X^{i}_t)_{t\in [0,T]}, 1\leq i \leq N\right\}$ interacting through its empirical measure
\begin{equation}
\label{SDE:particle:system}
X^{i}_t = \xi^{i} + \int_0^t b(s, X^{i}_s, \mu^{N}_s) \, \d s + \int_0^t \sigma(s, X^{i}_s, \mu^{N}_s) \,  \d W^{i}_s, \quad \mu^{N}_t := \frac{1}{N} \sum_{i=1}^{N}\delta_{X^{i}_t}, \quad i=1, \cdots, N,
\end{equation}

\noindent where $\left\{(\xi^{i}, (W^{i}_t)_{t\in [0,T]}), 1\leq i \leq N\right\}$ are $N$ i.i.d. copies of the couple $(\xi, W)$. The connection between the two above systems of SDEs comes from fact that the dynamics \eqref{SDE:MCKEAN} describes the limiting behaviour of an individual particle in \eqref{SDE:particle:system} when the size of the population $N$ grows to infinity as stated by the so-called propagation of chaos phenomenon, originally studied by McKean \cite{mckean1967propagation} and then investigated by Sznitman \cite{Sznitman}. 

However, in practical applications, one often wants to approximate the SDE \eqref{SDE:MCKEAN} numerically. Since the mean-field SDE \eqref{SDE:MCKEAN} as well as its associated  system of particles \eqref{SDE:particle:system} are in general not explicitly solvable, one often relies on the so-called Euler-Maruyama time-discretization scheme. For a positive integer $n$, the continuous version of the Euler-Maruyama scheme associated to the system of particles \eqref{SDE:particle:system}, denoted by $\left\{ \mathbf{X}^{N, n}_t = (X^{1, n}_t, \cdots, X^{N, n}_t), 0\leq t \leq T \right\}$, on the time interval $[0,T]$ with fixed mesh size $h=T/n$ is defined as follows for $t \in [0,T]$ and for $i =1, \cdots, N$
\begin{equation}\label{euler:scheme:dynamics}
\begin{cases}
X^{i, n}_t & = X_0^{i} + \int_0^t b(k_n(s), X^{i, n}_{k_n(s)}, \mu^{N, n}_{k_n(s)}) \, \d s + \int_0^t \sigma(k_n(s), X^{i, n}_{\eta_n(s)}, \mu^{N, n}_{k_n(s)}) \, \d W^{i}_s, \\
\mu^{N, n}_t & := \frac{1}{N} \sum_{i=1}^{N}\delta_{X^{i, n}_t},
\end{cases}
\end{equation}
\noindent where for $j=0, \cdots, n$, $k_n(t) = t_j$ if $t_j < t\leq t_{j+1}$ and $t_j = j h$, for any integer $j$.\\

The quantitative error analysis of the mean-field \eqref{SDE:MCKEAN} by the above Euler-Maruyama scheme can be carried out in several ways. The most natural idea is to compare the probability measures $\mathcal{L}(X^{1, n}_t)$ and $\mathcal{L}(X_t^{\xi})$ (or more generally $\mathcal{L}(X^{1, n}_t, \cdots, X^{k, n}_t)$ and $(\mathcal{L}(X_t^{\xi}))^{\otimes k}$ for a fixed $k$) for a given metric on the space of probability measures and for a fixed time $t\in [0,T]$. Another way is to compare the empirical measure $\mu^{N, n}_{t}$ with $\mathcal{L}(X_t^{\xi})$. Many quantitative propagation of chaos results have been established at the level of the particle system \eqref{SDE:particle:system}. For instance, if the diffusion coefficient is constant and the dependence of the drift in the measure variable is of linear form, that is, $b(x, \mu)=\int_{\mathbb{R}^d} B(x,y) \, \mu(\d y)$, for a Lipschitz continuous function $B$, then, it has been proved in \cite{Sznitman} that $\sup_{0\leq t \leq T}\mathcal{W}_2(\mathcal{L}(X_t^{1}),\mathcal{L}(X_t^{\xi}))$ is of order $N^{-\frac12}$. This result has been recently improved in \cite{la:23} in which it is shown that $\mathcal{W}_2(\mathcal{L}(X^{1}_t, \cdots, X^{k}_t),(\mathcal{L}(X_t^{\xi}))^{\otimes k})$ is of order $k/N$. When $b$ and $\sigma$ have a more general structure of dependence with respect to the measure variable and are merely Lipschitz continuous in space and measure uniformly in time then the convergence rate (with respect to the above Wasserstein metric) deteriorates with the dimension $d$ as it is needed to estimate distance between the empirical measure associated to i.i.d. samples and the limiting law for which standard results such as \cite{dereich:13} or \cite{fournier2013rate} can be used. However, remarkably, according to Lemma 5.10 \cite{delarue:lacker:ramanan} or Lemma 3.2 \cite{szpruch:tse}, one may derive a dimension-free rate assuming that the drift and diffusion coefficients satisfy some strong regularity assumption with respect to the measure argument. We also refer to \cite{FRIKHA202176} for a convergence rate (with respect to the Wasserstein metric $\mathcal{W}_1$) for the numerical approximation by the Euler-Maruyama time discretization scheme of some one-dimensional McKean–Vlasov SDE, driven by a spectrally-positive Lévy process and a Brownian motion. \\

Another way to assess the convergence rate is to evaluate the statistical distribution of the empirical measure against functions defined on $\mathcal{P}(\mathbb{R}^d)$, that is, by studying the two quantities
\begin{equation}\label{introduction:strong:weak:semigroup:prop:chaos}
\mathbb{E}[|\Phi(\mu_{t}^{N, n}) - \Phi(\mathcal{L}(X_t^{\xi}))|] \quad \mbox{ and }\quad |\mathbb{E}[\Phi(\mu_{t}^{N, n}) - \Phi(\mathcal{L}(X_t^{\xi}))]|,
\end{equation}

\noindent where $\Phi: \mathcal{P}(\mathbb{R}^d)\rightarrow \mathbb{R}$ satisfies suitable regularity assumptions. The first quantity in \eqref{introduction:strong:weak:semigroup:prop:chaos} should be regarded as a strong error of approximation for $\mathcal{L}(X_t^{\xi})$ by $\mu^{N,n}_{t}$ while the second quantity should be understood as a weak error. This type of convergence has advanced significantly in recent years. In \cite{Oben2019}, the authors prove that the above weak error for $\Phi(\mu)=\int_{\mathbb{R}^d} F(x) \, \mu(\d x)$ is of order $N^{-1}+h$, when both the drift and diffusion coefficients depend on the measure variable through the quantity $\mathbb{E}[\alpha(X^{\xi}_s)]$, for some twice differentiable with bounded derivatives functions $F$ and $\alpha$. In \cite{szpruch:tse}, the authors establish a bound for the weak approximation error of order $N^{-1}+h$ for a general smooth $\Phi$ in the case of constant diffusion coefficient and time-homogeneous smooth drift. One may also refer to \cite{mischler2015new, mischler2013kac}, \cite{kolokoltsov2010nonlinear} for other quantitative weak convergence results for the approximation of $\mu_t$ by the empirical measure of the system of interacting particles \eqref{SDE:particle:system}. As exposed in \cite{cardel19}, the central idea is to leverage the semigroup generated by \eqref{SDE:MCKEAN} that acts on real-valued functions defined on the space of probability measures. This idea has been intensively used in recent works such as \cite{chassagneux:szpruch:tse}, \cite{CHAUDRUDERAYNAL20211} to derive establish weak and strong convergence rate for the empirical measure $\mu_t^{N}$ and \cite{cardaliaguet2019master} to study the convergence problem for mean-field games. We also refer to the recent work \cite{delarue:tse} for a uniform in time weak propagation of chaos result for McKean-Vlasov SDEs with constant diffusion coefficient evolving on the $d$-dimensional torus under some mild-regularity assumption on the drift and test function.   

Our main objective here consists in establishing some explicit strong and weak quantitative convergence rates for the numerical approximation of \eqref{SDE:MCKEAN} by the corresponding Euler-Maruyama scheme \eqref{euler:scheme:dynamics} in two distinct settings.

In the first setting, the coefficients $b$ and $a=\sigma \sigma^{\trans}$ are assumed to be irregular but the diffusion coefficient is non-degenerate. In particular, if $b$ satisfies some mild $\eta$-H\"older regularity assumption in the space and measure variables, for some $\eta \in (0,1]$, (see Section \ref{AssumpSDE} for the precise assumption) and $\sigma$ is Lipschitz-continuous in space and measure variables uniformly in time, we establish a convergence rate for the strong error $\sup_{0\leq t \leq T}\mathcal{W}_2(\mathcal{L}(X_t^{1,n}), \mathcal{L}(X_t^{\xi}))^2$ and $\sup_{0\leq t \leq T}\mathbb{E}[\mathcal{W}_2(\mu_t^{N, n}, \mathcal{L}(X_t^{\xi}))^2]$ which is of the same order (in terms of the number $N$ of particles) as in the standard Cauchy Lipschitz setting. Regarding the convergence rate at the level of the semigroup \eqref{introduction:strong:weak:semigroup:prop:chaos}, we establish an explicit strong convergence rate of order $\mathbb{E}[\mathcal{W}_2(\mu_0^N, \mu)^2]^{1/2} \wedge N^{-1/2} + h^{\eta/2}$ and a weak convergence rate of order $N^{-1} + h^{\eta/2}$ for a suitable class of irregular maps satisfying some mild local H\"older regularity type assumptions. The key idea to establish the aforementioned convergence rates is to leverage the smoothing properties of the mean-field SDE established in \cite{CHAUDRUDERAYNAL20221,CHAUDRUDERAYNAL20211} in the current uniformly elliptic setting.

In the second setting, where both the coefficients and the test function are smooth enough, we prove that the weak rate of convergence for the approximation of $\Phi(\mathcal{L}(X_t^{\xi}))$ by $\Phi(\mu^{N, n}_t)$ is optimal with an error of order $N^{-1}+h$. As already mentioned, this optimal rate of convergence has been established in \cite{Oben2019} for linear functions $\Phi$ when the coefficients have a specific dependence with respect to the measure variable. Also, compared to \cite{szpruch:tse}, we handle the case of a general diffusion coefficient and both coefficients are allowed to be time-dependent.

\subsection*{Outline} The paper is structured as follows. We begin by recalling the central notions of differentiation for maps defined on the space of probability measures, along with the assumptions underpinning our analysis, in Section \ref{section:preliminaries:assumptions:main:results}. In the same section, we present our three main results, which establish various weak and strong convergence rates for approximating the McKean–Vlasov SDE \eqref{SDE:MCKEAN} using its Euler–Maruyama discretization scheme \eqref{euler:scheme:dynamics}. The proofs of these results are then provided in Section \ref{section:proof:mani:results}.

\subsection*{Useful notations}
In the following, we use $C$ and $K$ to denote generic positive constants that may depend on the coefficients $b$ and $\sigma$. The notation $c$ is reserved for constants that depend on $|\sigma|_\infty$ and $\lambda$ (see assumption (\textbf{HE})\, in Section \ref{AssumpSDE}) but not on the time horizon $T$. Moreover, the value of $C$, $K$, and $c$ may vary from line to line.  

We denote by $\mathcal{P}(\mathbb{R}^d)$ the space of probability measures on $\mathbb{R}^d$ and by $\mathcal{P}_q(\mathbb{R}^d) \subset \mathcal{P}(\mathbb{R}^d)$, $q\geq1$, the subset of probability measures with a finite moment of order $q$. We equip $\mathcal{P}(\mathbb{R}^d)$ with the total variation metric. We also let $M_q(\mu)^q := \int_{\mathbb{R}^d} |x|^q \, \mu(\d x)$. For $q\geq1$, we equip $\mathcal{P}_q(\mathbb{R}^d)$ with the $q$-Wasserstein metric defined by
$$
\mathcal{W}_q(\mu, \nu) = \Big( \inf_{\pi \in \Pi(\mu, \nu)}\int_{(\mathbb{R}^d)^2} |x-y|^q \, \pi(\d x, \d y) \Big)^{\frac{1}{q}},
$$

\noindent where $\Pi(\mu, \nu)$ is the set of couplings between $\mu$ and $\nu$, i.e. the set of measures on $(\R^d)^2$ having $\mu,\nu$ as marginal laws.

For a positive variance-covariance matrix $\Sigma$, the function $y\mapsto g(\Sigma , y)$ represents the $d$-dimensional Gaussian kernel with covariance matrix $\Sigma$, given by
$$
g(\Sigma, x) = (2\pi)^{-\frac{d}{2}} (\text{det} \, \Sigma)^{-\frac12} \exp(-\frac12 \langle \Sigma^{-1} x, x \rangle).
$$

We also define the first and second order Hermite polynomials 
$$
H^{i}_1(\Sigma, x) := -(\Sigma^{-1} x)_{i}, \quad H^{i, j}_2(\Sigma, x) := (\Sigma^{-1} x)_i (\Sigma^{-1} x)_j - (\Sigma^{-1} )_{i, j}, \quad 1\leq i, j \leq d.
$$

These polynomials are related to the Gaussian density via the following identities: 
$$
\partial_{x_i} g(\Sigma, x) = H^{i}_1(\Sigma, x) g(\Sigma, x), \quad \partial^2_{x_i, x_j} g(\Sigma, x) = H^{i, j}_2(\Sigma, x) g(\Sigma, x).
$$

 Moreover, when $\Sigma= c I_d$, for some positive constant $c$, we use the simplified notation 
 $$
 g(c, x) := \frac{1}{(2\pi c)^{\frac{d}{2}}} \exp\Big(- \frac{|x|^2}{2c}\Big).
 $$

A key inequality that will be frequently used in this work states that for any for any $p, q>0$ and any $x\in \mathbb{R}$, 
$$
|x|^p e^{-q x^2} \leq \big(\frac{p}{2qe}\big)^{\frac{p}{2}}.
$$

As a direct consequence, we obtain the {\it space-time inequality}: for any $p, \, c>0$ and any $c'>c$ there exists $C:=C(c, c', p)$ such that for any $x \in \mathbb{R}$
\begin{gather}
 |x|^p g(c t, x)\leq C t^{\frac{p}{2}} g(c' t, x). \label{space:time:inequality}
\end{gather}

 This, in turn, yields the following standard Gaussian estimates for the first and second order derivatives of the Gaussian density. Specifically, for any $c>0$ and any $c'>c$, there exists a positive constant $C$ such that
\begin{align}
 |H^{i}_1(c t, x)| g(c t, x) \leq \frac{C}{t^{\frac12}} g(c' t,x), \quad |H^{i, j}_2(c t, x)| g(c t, x) \leq \frac{C}{t}g(c' t, x), \quad 1\leq i,j \leq d. \label{standard}
\end{align}

\section{Preliminaries, assumptions and main results}\label{section:preliminaries:assumptions:main:results}

\subsection{Preliminaries}\label{subsection:preliminaries}
We recall that the space of probability measures $\mathcal{P}(\mathbb{R}^d)$ is endowed with the total variation metric while $\mathcal{P}_q(\mathbb{R}^d)$ is equipped with the $q$-Wasserstein metric.

We here briefly present the regularity notions that we will use when working with mappings defined on $\mathcal{P}_q(\mathbb{R}^d)$. We refer the reader to Lions' seminal lectures \cite{lecture:lions:college}, to Cardaliaguet's lectures notes \cite{cardaliaguet} or to Chapter 5 of Carmona and Delarue's monograph \cite{cardel19} for a more complete and detailed exposition. 

Following the recent works \cite{CHAUDRUDERAYNAL20211, CHAUDRUDERAYNAL20221}, we will employ two notions of differentiation of a continuous map $U$ defined on $\mathcal{P}_q(\mathbb{R}^d)$. The first one, called the {linear functional derivative} and denoted by $\frac{\delta U}{\delta m} $, plays an important role when one investigates the well-posedness of the non-linear martingale problem associated to \eqref{SDE:MCKEAN} as well as the regularity properties of the associated transition density.

\begin{Definition}
\label{continuous:L:derivative} A continuous map $U: \mathcal{P}_q(\mathbb{R}^d) \rightarrow \mathbb{R}$ is said to admit a linear functional derivative if there exists a measurable map $\frac{\delta U}{\delta m}: \mathcal{P}_q(\mathbb{R}^d) \times \mathbb{R}^d \rightarrow \mathbb{R}$ such that for any bounded subset $\mathcal{K} \subset \mathcal{P}_q(\mathbb{R}^d)$
$$
\sup_{(y, m) \in \mathbb{R}^d \times \mathcal{K}} \left\{(1+|y|^q)^{-1}\Big|\frac{\delta U}{\delta m}(m)(y) \Big| \right\}< \infty
$$

\noindent and such that for any $m, m' \in \mathcal{P}_q(\mathbb{R}^d)$,
$$
\lim_{\varepsilon \downarrow 0} \frac{U((1-\varepsilon) m + \varepsilon m') - U(m)}{\varepsilon} = \int_{\mathbb{R}^d} \frac{\delta U}{\delta m}(m)(y) \, \d (m'-m)(y).
$$ 

The map $y\mapsto \frac{\delta U}{\delta m}(m)(y)$ being defined up to an additive constant, we will follow the usual normalization convention $\int_{\mathbb{R}^d} \frac{\delta U}{\delta m}(m)(y) \, \d m(y) = 0$.
\end{Definition}

From the above definition, we observe that for all $m, m' \in \mathcal{P}(\mathbb{R}^d)$
$$
 U(m')-U(m) = \int_0^1 \int_{\mathbb{R}^d} \frac{\delta U}{\delta m}((1-\lambda) m + \lambda m')(y) \,  \d(m'-m)(y)\, \d \lambda.
$$ 
It is readily seen that if $y\mapsto \frac{\delta U}{\delta m}(m)(y)$ is Lipschitz continuous, with a Lipschitz modulus bounded uniformly on a ball $\mathcal{B} \subset \mathcal{P}_q(\mathbb{R}^d)$, then, from the Monge-Kantorovich duality principle for any $m, m' \in \mathcal{B}$
$$
 |U(m) - U(m')| \leq \sup_{m'' \in \mathcal{B}}  \textnormal{Lip}\Big(\frac{\delta U}{\delta m}(m'')(.)\Big) \mathcal{W}_1(m, m').
$$

We will also work with higher order derivatives. This is naturally defined by induction as follows.

\begin{Definition}\label{continuous:L:derivative:higher:order}Let $p\geq2$. The continuous map $U: \mathcal{P}_q(\mathbb{R}^d) \rightarrow \mathbb{R}$ is said to have a linear functional derivative at order $p$ if there exists a measurable map $\frac{\delta^{p} U}{\delta m^{p}}: \mathcal{P}_q(\mathbb{R}^d) \times (\mathbb{R}^d)^{p-1}\times \mathbb{R}^d \rightarrow \mathbb{R}$ such that for any bounded subset $\mathcal{K} \subset \mathcal{P}_q(\mathbb{R}^d) \times (\mathbb{R}^d)^{p-1}$
$$
\sup_{y_p\in \mathbb{R}^d, (m, \mathbf{y}_{p-1}) \in \mathcal{K}}\left\{ (1+|y_p|^q)^{-1} \Big|\frac{\delta^{p} U}{\delta m^{p}}(m)(\mathbf{y}_{p-1}, y_p)\Big| \right\} < \infty
$$ 
\noindent and such that for any $m, m' \in \mathcal{P}_q(\mathbb{R}^d)$ and any $\mathbf{y}_{p-1}\in (\mathbb{R}^d)^{p-1}$
$$
\lim_{\varepsilon \downarrow 0} \varepsilon^{-1} \Big(\frac{\delta^{p-1}U}{\delta m^{p-1}}((1-\varepsilon) m + \varepsilon m')(\mathbf{y}_{p-1}) - \frac{\delta^{p-1}U}{\delta m^{p-1}}(m)(\mathbf{y}_{p-1})\Big) = \int_{\mathbb{R}^d} \frac{\delta^{p} U}{\delta m^{p}}(m)(\mathbf{y}_{p}) \, \d (m'-m)(y_p)
$$
\noindent with the notation $\mathbf{y}_p := (\mathbf{y}_{p-1}, y_p)$. We again follow the convention $\int_{\mathbb{R}^d} \frac{\delta^{p} U}{\delta m^{p}}(m)(\mathbf{y}_p) \, \d m(y_p) = 0$.
\end{Definition}

In the special case where $q=0$, we let $\mathcal{P}_0(\mathbb{R}^q)=\mathcal{P}(\mathbb{R}^q)$ and the linear functional derivative $y_p\mapsto \frac{\delta^p U}{\delta m^p}(m)(\mathbf{y}_{p-1}, y_p)$ is assumed to be bounded locally uniformly in $(m, \mathbf{y}_{p-1})$.\\

The second notion that we will use is the so-called Wasserstein derivative $\partial_\mu U$ (see e.g. \cite{buckdahn2017, cardel19}) of a real-valued map $U$ defined on $\mathcal{P}_2(\mathbb{R}^d)$. Without entering into technical details, according to Proposition 5.48 \cite{cardel19}, it is the gradient field of the linear functional derivative $\partial_\mu U(m)(y) = \partial_y[\delta U/\delta m](m)(y)$. Also, as for the second order Wasserstein derivative, we have $\partial^2_\mu U(m)(y) = \partial_{y'}\partial_y[\delta^2 U/\delta m^2](m)(y, y')$ and we will often use the notation $\partial_\mu^{i} U(m)(y) = \partial_{y_i}[\delta U/\delta m](m) (y)$ and $\partial^{(i,j)}_\mu U(m)(y) = \partial_{(y')_j}\partial_{y_i}[\delta^2 U/\delta m^2](m)(y, y')$, $i, j = 1, \cdots, d$.\\

For our analysis, we will deal with maps defined on $[0,T]\times \mathbb{R}^d\times\mathcal{P}_2(\mathbb{R}^d)$. Hence, we define the space of functions with continuous derivatives not only with respect to the time and space variables but also with respect to the measure variable.

\begin{Definition}\label{DefCp22}
Let $T>0$, $m\in\{0,1\}$. The continuous function $U:[0,T]\times \mathbb{R}^d\times\mathcal{P}_2(\mathbb{R}^d)$ is in $\Cc^{m,2,2}([0,T]\times\mathbb{R}^d\times\mathcal{P}_2(\mathbb{R}^d))$ if the following conditions hold:
\begin{itemize}
    \item [(i)]For any $\mu\in\mathcal{P}_2(\mathbb{R}^d)$, the mapping $[0,T]\times\mathbb{R}^d \ni (t,x)\mapsto U(t,x,\mu)$ belongs to $\Cc^{m,2}([0,T]\times\mathbb{R}^d)$ and the following functions $ [0,T]\times\mathbb{R}^d\times\Pc_2(\mathbb{R}^d) \ni (t,x,\mu)\mapsto \partial^m_t U(t,x,\mu), \, \partial_x U(t,x,\mu)$, $\partial^2_x U(t,x,\mu)$ are continuous.
    \item [(ii)] For any $ (t,x)\in [0,T]\times\mathbb{R}^d$, the mapping $\mathcal{P}_2(\mathbb{R}^d) \ni \mu \mapsto U(t,x,\mu)$ is differentiable in the Wasserstein sense and for any $ \mu\in\mathcal{P}_2(\mathbb{R}^d)$, we can find a version of the mapping $ \mathbb{R}^d \ni \upsilon \mapsto \partial_\mu U(t,x,\mu)(\upsilon)$ such that the mapping $[0,T]\times \mathbb{R}^d\times \mathcal{P}_2(\mathbb{R}^d)\times\mathbb{R}^d \ni (t,x,\mu, \upsilon)\mapsto \partial_\mu U(t,x,\mu)(\upsilon)$ is locally bounded and is continuous at any $(t,x,\mu, \upsilon)$ such that $ \upsilon \in\text{Supp}(\mu)$.
    \item[(iii)] For the above version of $\partial_\mu U$, for any $ (t,x,\mu)\in [0,T]\times\mathbb{R}^d\times\mathcal{P}_2(\mathbb{R}^d)$, the mapping $\mathbb{R}^d\ni \upsilon \mapsto\partial_\mu U(t,x,\mu)(\upsilon)$ is continuously differentiable and its derivative $\partial_\upsilon[\partial_\mu U(t,x,\mu)](\upsilon)\in\mathbb{R}^{d\times d}$ is jointly continuous in $(t,x,\mu, \upsilon)$ at any $(t,x,\mu, \upsilon)$ such that $\upsilon\in\text{Supp}(\mu)$.
\end{itemize}
  The continuous function $U:[0,T]\*\R^d\*\Pc_2(\R^d)$ is in $\Cc_f^{p,2,2}([0,T]\*\R^d\*\Pc_2(\R^d))$ if  $U\in \Cc^{p,2,2}([0,T]\*\R^d\*\Pc_2(\R^d)) $ and the following conditions hold:
  \begin{itemize}
      \item [(iv)] For all $ \upsilon\in\mathbb{R}^d$, the version $\mu\in\mathcal{P}_2(\mathbb{R}^d)\mapsto\partial_\mu U(t,x,\mu)(\upsilon)$ discussed in (ii) is differentiable component by component with a derivative given by 
    $\mathcal{P}_2(\mathbb{R}^d)\times(\mathbb{R}^d)^2 \ni (\mu, \upsilon , \upsilon')\mapsto \partial^2_\mu U(t,x,\mu)(\upsilon, \upsilon')\in\mathbb{R}^{d\times d}$ such that for all $ \mu\in\mathcal{P}_2(\mathbb{R}^d)$ and $X\in\L_2$ with $\Lc(X)=\mu$, the $\mathbb{R}^{d\times d}$-valued random variable $\partial^2_\mu U(t,x,\mu)(\upsilon)(X)$ gives the Fréchet derivative of the map $X'\in\L_2\mapsto\partial^2_\mu U(t,x,\Lc(X'))(\upsilon)$ for all $\upsilon\in\mathbb{R}^d$. The map $(t,x,\mu, \upsilon, \upsilon')\in [0,T]\times \mathbb{R}^d\times\mathcal{P}_2(\mathbb{R}^d)\times(\mathbb{R}^d)^2\mapsto \partial^2_\mu U(t,x,\mu)(\upsilon, \upsilon')$ is also continuous for the product topology.
  \end{itemize}
\end{Definition}
\subsection{Main assumptions}\label{AssumpSDE}

As previously mentioned, we will distinguish two different settings in our analysis. The first one corresponds to the case where the coefficients $b$ and the diffusion matrix $\mathbb{R}^{d \times d}\owns a(t,x,m)=\sigma\sigma\trans(t,x,m),(t,x,m)\in [0,T]\times\mathbb{R}^d\times\mathcal{P}(\mathbb{R}^d)$ are irregular but $a$ is non-degenerate. In this setting, we introduce the following two assumptions. \\

\noindent(\textbf{HR})
\begin{itemize}
    \item [(i)]The maps $\mathbb{R}_+\times\mathbb{R}^d\times\mathcal{P}(\mathbb{R}^d)\owns (t,x,m)\mapsto b(t,x,m)\in\mathbb{R}^d,a(t,x,m)\in\mathbb{R}^{d\times d}$ are bounded. Moreover, for any $m \in  \mathcal{P}(\mathbb{R}^d)$ and any $(i,j)\in \left\{1, \cdots, d\right\}^2$, $(t, x) \mapsto b_i(t, x, m), \, a_{i, j}(t, x, m)$ are $(\eta/2,\eta)$-H\"older continuous, for some $\eta\in (0,1]$, namely
    $$
    \sup_{m, \, (t,x)\neq (s,y)}\frac{|b_i(t, x, m)-b_i(s, y, m)| + |a_{i, j}(t, x, m)-a_{i, j}(s, y, m)|}{(t-s)^{\eta/2} + |x-y|^\eta}< \infty.
    $$
    \item[(ii)]For any $(i,j)\in\{1,\dots,d\}^2$ and any $(t,x)\in\mathbb{R}_+\times \mathbb{R}^d$, the map $m\mapsto a_{i, j}(t,x,m)$ admits two bounded and  continuous linear functional derivatives, such that $(x,\upsilon)\mapsto\frac{\delta a_{i, j}}{\delta m}(t,x,m)(\upsilon)$ and $(x, \upsilon')\mapsto \frac{\delta^2 a_{i, j}}{\delta m^2}(t,x,m)(\upsilon, \upsilon')$ are $\eta$-H\"older continuous function, for some $\eta\in(0,1]$ uniformly with respect to the other variables. 
    \item[(iii)]For any $i\in\{1,\dots,d\}$ and any $(t,x)\in\mathbb{R}_+\times\mathbb{R}^d$, the map $m\mapsto b_i(t,x,m)$ admits two bounded and continuous linear functional derivatives, such that $\upsilon \mapsto\frac{\delta b_{i}}{\delta m}(t,x,m)(\upsilon)$ and $\upsilon'\mapsto \frac{\delta^2 b_i}{\delta m^2}(t,x,m)(\upsilon, \upsilon')$ are $\eta$-H\"older continuous function, for some $\eta\in(0,1]$, uniformly with respect to the other variables. 
\end{itemize}
(\textbf{HE}) The diffusion matrix $a$ is uniformly non-degenerate, that is, there exists $\lambda\geq 1$ such that for any $m\in\mathcal{P}(\mathbb{R}^d)$ and any $(t,x, z)\in\mathbb{R}_+\times (\mathbb{R}^d)^2$ 
$$
\lambda^{-1}\lvert z \rvert^2\leq \langle a(t,x,m)z,z\rangle\leq\lambda\lvert z \rvert^2.
$$

According to Theorem 3.4 in \cite{CHAUDRUDERAYNAL20221}, under the assumptions (\textbf{HR}) and (\textbf{HE}), the SDE \eqref{SDE:MCKEAN} is well-posed in the weak sense for any initial distribution $\mu\in\mathcal{P}(\mathbb{R}^d)$. Moreover, weak well-posedness also holds when the SDE \eqref{SDE:MCKEAN} starts from the random vector $\xi$ with law $\mu$ at any given time $s\geq0$.

Introducing the decoupled stochastic flow associated to the SDE \eqref{SDE:MCKEAN} given by  the unique solution of the following SDE with dynamics
\begin{equation}\label{SDEEMdecop}
    X^{s,x,\mu}_t=x+\int_s^t b(r,X^{s,x,\mu}_r,\mathcal{L}(X^{s,\xi}_r)) \, \d r+\int_s^t \sigma (r,X^{s,x,\mu}_r,\mathcal{L}(X^{s,\xi}_r)) \, \d W_r,
\end{equation}

\noindent recalling that $\mathcal{L}(X^{s,\xi}_t)$ stands for the law of the solution of SDE \eqref{SDE:MCKEAN} (starting at time $s$ from $\xi$ with distribution $\mu$) at time $t$, it follows that, under (\textbf{HR}) and (\textbf{HE}), both random variables $X^{s,x,\mu}_t$ and $X^{s,\xi}_t$ admit a density, for any $t>0$, denoted by $z\mapsto p(\mu,s,t,x,z)$ and $p(\mu,s,t,z)$ respectively which satisfy
$$
p(\mu,s,t,z)=\int_{\mathbb{R}^d}p(\mu,s,t,x,z) \, \mu(\d x).
$$

Moreover, Theorem 3.6 \cite{CHAUDRUDERAYNAL20221} guarantees that for any fixed $t>0$ and $z\in \mathbb{R}^d$, $(s, \mu) \mapsto p(\mu, s, t, z) \in \mathcal{C}^{1, 2}([0,t)\times \mathcal{P}_2(\mathbb{R}^d))$ and $(s, x, \mu) \mapsto p(\mu, s, t, x, z) \in \mathcal{C}^{1, 2, 2}([0,t)\times \mathbb{R}^d \times \mathcal{P}_2(\mathbb{R}^d))$ and that their derivatives satisfies some Gaussian estimates.

Let us finally emphasize that assumptions (\textbf{HR})(ii) and (\textbf{HR})(iii) ensure that for any $(t, x) \in \mathbb{R}_+\times \mathbb{R}^d$, both mappings $m\mapsto b_i(t, x, m), \, a_{i, j}(t, x, m)$ are uniformly $\eta$-H\"older continuous with respect to the Wasserstein metric $\mathcal{W}_1$. Combined with (\textbf{HR})(i), this further implies that $b_i$ and $a_{i, j}$ satisfy the following H\"older regularity estimate: there exists a constant $C<\infty$ such that for any $t, s\in \mathbb{R}_+$, any $x, y \in \mathbb{R}^d$ and any $m, m' \in \mathcal{P}(\mathbb{R}^d)$, we have
\begin{equation}\label{holder:reg:coefficients}
\begin{aligned}
|b_i(t, x, m) - b_i(s, y, m')| & + |a_{i, j }(t, x, m) - a_{i, j}(s, y, m')| \\
& \leq C \Big( (t-s)^{\eta/2} + |x-y|^\eta + \mathcal{W}_1(m, m')^\eta \Big).
\end{aligned}
\end{equation}

The second setting corresponds to the case of smooth coefficients $b$ and $\sigma$ in which we establish the optimal weak convergence rate at the level of the semigroup. We introduce the following assumption.\\
\noindent(\textbf{HFR})
\begin{itemize}
    \item [(i)] For any $1\leq i, \, j\leq d$, the coefficients $b_i, \, a_{i, j} \in \mathcal{C}^{1, 2, 2}_f([0,T] \times \mathbb{R}^d \times \mathcal{P}_2(\mathbb{R}^d))$ with the corresponding derivatives being bounded. 
    \item[(ii)] There exist a positive integer $\tilde{q}$ and $\tilde{\sigma}: [0,T]\times \mathbb{R}^d \times \mathcal{P}_2(\mathbb{R}^d) \rightarrow \mathbb{R}^d \times \mathbb{R}^{\tilde{q}} $ such that $a=\tilde{\sigma}\tilde{\sigma}^{t}$. For any $f \in \left\{ b_i, \, \tilde{\sigma}_{i, j}, \, 1\leq i \leq d, \, 1\leq j\leq \tilde{q} \right\}$ and any $1\leq k, \ell \leq d$, the derivatives $\partial_{x_k} f$, $\partial^2_{x_k, x_\ell} f$, $\partial^{k}_\mu f$, $\partial^{(k, \ell)}_\mu f$, $\partial_{x_\ell} \partial^{k}_\mu f$ and $\partial^{\ell}_\mu \partial_{x_k} f$ exist, are continuous, bounded with the second order derivatives $\partial^2_{x_k, x_\ell} f(s,.)$, $\partial^{(k, \ell)}_\mu f(s,.)(.)$, $\partial_{x_\ell} \partial^{k}_\mu f(s,.)(.) $ and $\partial^{\ell}_\mu \partial_{x_k} f(s,.)(.)$ being Lipschitz continuous uniformly in $s\in [0,T]$.
\end{itemize}

\subsection{Main results}

Our first main result quantifies the strong convergence rate of the Euler-Maruyama scheme \eqref{euler:scheme:dynamics} at level of the trajectories. As in the standard Cauchy-Lipschitz framework, we leverage a coupling argument based on the auxiliary process $\left\{\bar{X}^i=(\bar{X}^i_t)_{0\leq t\leq T}, 1\leq i\leq N\right\}$ following the same dynamics as the McKean-Vlasov SDE \eqref{SDE:MCKEAN} but with the same input $(\xi^{(i)},W^{(i)})_{1\leq i\leq N}$ as the Euler-Maruyama scheme \eqref{euler:scheme:dynamics} instead of $(\xi,W)$:
\begin{equation}\label{SDEbarXi}
    \bar{X}^i_t  = \xi^{(i)}+\int_0^t b(s,\bar{X}^i_s, \mathcal{L}(\bar{X}^{i}_s)) \, \d s+\int_0^t\sigma(s,\bar{X}^i_s, \mathcal{L}(\bar{X}^{i}_s)) \, \d W^{(i)}_s, \quad 1\leq i \leq N.
\end{equation}

By weak uniqueness, the processes $\left\{(\bar{X}^i_t)_{0\leq t \leq T}, i=1,\cdots,N\right\}$ are i.i.d. copies of $(X^{\xi}_t)_{0\leq t \leq T}$. In particular, for all $t\in [0,T]$ and all $i \in \left\{1, \cdots, N\right\}$, $\mathcal{L}(\bar{X}^{i}_t) = \mathcal{L}({X}^{\xi}_t)$.

The proof of the following result is postponed to Section \ref{proof:Thm:Conv:EulerMaru}.
\begin{theorem}[Strong rate of convergence at the level of the trajectories]\label{Thm:Conv:EulerMaru}
    Assume that (\textbf{HE}) and (\textbf{HR}) are satisfied and that $M_q(\mu) = (\int |x|^q \, \mu(\d x))^{1/q} <+\infty$ for some $q>4$. Assume that $\mathbb{R}_+\times \mathbb{R}^d \times \mathcal{P}_2(\mathbb{R}^d) \ni (t, x, m)\mapsto \sigma(t, x, m)$ is $\eta/2$-H\"older continuous in time (uniformly in $x$, $m$) and Lipschitz continuous in $x$ and $m$ (uniformly in $t$). Then, there exists a constant $C=C(T, (\textbf{HR}), (\textbf{HE}), M_q(\mu))<\infty$, $T\mapsto C(T, (\textbf{HR}), (\textbf{HE}), M_q(\mu))$ being non-decreasing, such that 
    \begin{equation}\label{strong:error:sup:outisde}
        \sup_{0\leq t\leq T}\E[\mathcal{W}_2(\mathcal{L}(X^{\xi}_t),\mu^{N,n}_t)^2]+\max_{i=1,\dots,N}\sup_{0\leq t\leq T}\E[\lvert X^{i,n}_t-\bar{X}^i_t\rvert^2]\leq C(\varepsilon_N+h^\eta),
    \end{equation}
    and 
     \begin{equation}\label{strong:error:sup:inside}
       \E[ \sup_{0\leq t\leq T}\mathcal{W}_2(\mathcal{L}(X^{\xi}_t),\mu^{N,n}_t)^2]+\max_{i=1,\dots,N}\E[\sup_{0\leq t\leq T}\lvert X^{i,n}_t-\bar{X}^i_t\rvert^2]\leq C(\sqrt{\varepsilon_N}+h^\eta),
    \end{equation}
    where $\varepsilon_N$ is defined by 
 \begin{equation}\label{def:epsilonN}
     \varepsilon_N=\begin{cases}
         N^{-\frac{1}{2}}, \quad d<4\\
         N^{-\frac{1}{2}}\log(N+1), \quad d=4\\
         N^{-\frac{2}{d}}, \quad d>4.
     \end{cases}
 \end{equation}
\end{theorem}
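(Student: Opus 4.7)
The plan is to run the classical coupling between the interacting Euler--Maruyama particles $X^{i,n}$ and the i.i.d.\ copies $\bar X^{i}$ from \eqref{SDEbarXi}, complemented by the intermediate empirical measure $\bar\mu^N_t:=\tfrac{1}{N}\sum_{i=1}^{N}\delta_{\bar X^i_t}$. For the Wasserstein piece of \eqref{strong:error:sup:outisde}, the triangle inequality gives
$$
\mathcal W_2(\mathcal L(X^\xi_t),\mu^{N,n}_t)^2\le 2\,\mathcal W_2(\mathcal L(X^\xi_t),\bar\mu^N_t)^2+2\,\mathcal W_2(\bar\mu^N_t,\mu^{N,n}_t)^2;
$$
the first summand is controlled by the Fournier--Guillin empirical measure convergence estimate applied to the i.i.d.\ family $\bar X^i_t$ (it is exactly here that the shape of $\varepsilon_N$ and the moment hypothesis $q>4$ enter), and the second by the obvious coupling $\mathcal W_2(\bar\mu^N_t,\mu^{N,n}_t)^2\le\tfrac{1}{N}\sum_i|X^{i,n}_t-\bar X^i_t|^2$. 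By exchangeability everything thus reduces to controlling the coupling errors $\Psi_n(t):=\sup_{s\le t}\E[|X^{1,n}_s-\bar X^1_s|^2]$ and, for \eqref{strong:error:sup:inside}, its sup-inside analogue $\tilde\Psi_n(t):=\E[\sup_{s\le t}|X^{1,n}_s-\bar X^1_s|^2]$.

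For $\Psi_n$, I would apply It\^o to $|X^{1,n}_t-\bar X^1_t|^2$, then Cauchy--Schwarz on the drift part and the It\^o isometry on the martingale part, and split the drift difference through the intermediate arguments $(s,X^{1,n}_s,\mu^{N,n}_s)$, $(s,\bar X^1_s,\mu^{N,n}_s)$ and $(s,\bar X^1_s,\bar\mu^N_s)$. The time- and space-discretization pieces contribute $Ch^\eta$ via \eqref{holder:reg:coefficients} together with the standard estimate $\E[|X^{1,n}_s-X^{1,n}_{k_n(s)}|^2]\le Ch$ (from boundedness of $b,\sigma$); the coupling piece contributes $\E[|X^{1,n}_s-\bar X^1_s|^{2\eta}]\le\Psi_n(s)^\eta$; the empirical-vs-empirical piece contributes $\mathcal W_1(\mu^{N,n}_s,\bar\mu^N_s)^{2\eta}\le\bigl(\tfrac{1}{N}\sum_j|X^{j,n}_s-\bar X^j_s|^2\bigr)^\eta$ whose expectation is $\lesssim\Psi_n(s)^\eta$. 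The truly delicate piece is the empirical-vs-limit term $b(s,\bar X^1_s,\bar\mu^N_s)-b(s,\bar X^1_s,\mathcal L(\bar X^1_s))$: the naive $\mathcal W_1^{2\eta}$ Jensen bound would only yield $\varepsilon_N^\eta$, which is strictly worse than $\varepsilon_N$. To recover the sharp $\varepsilon_N$ rate one expands via the linear functional derivative using (\textbf{HR})(iii), freezes its measure argument at $\mathcal L(\bar X^1_s)$, and dominates the remainder using the boundedness of $\tfrac{\delta^2 b}{\delta m^2}$; the leading term is then a centered average of i.i.d.\ quantities of $L^2$ norm $O(N^{-1/2})$, and the quadratic remainder is a centered $U$-statistic of order two of $L^2$ norm $O(N^{-1})$. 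The same linear-functional-derivative trick, combined with the Lipschitz/$\eta/2$-H\"older hypothesis on $\sigma$, gives $\E[|\Delta\sigma_s|^2]\lesssim h^\eta+\Psi_n(s)+\varepsilon_N$.

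Collecting everything produces
$$
\Psi_n(t)\le C(h^\eta+\varepsilon_N)+C\int_0^t\bigl(\Psi_n(s)+\Psi_n(s)^\eta\bigr)\,\d s.
$$
The a priori $L^2$-boundedness of $X^{1,n}$ and $\bar X^1$ (which follows from the boundedness of $b,\sigma$ and the moment assumption on $\xi$) gives $\Psi_n\le K$, so $\Psi_n(s)^\eta\le K^{1-\eta}\Psi_n(s)$ linearizes the Bihari-type term, and a standard linear Gronwall closes \eqref{strong:error:sup:outisde}. Estimate \eqref{strong:error:sup:inside} is obtained by putting the sup inside and applying Doob/BDG before taking expectations; the only term whose rate actually degrades is the sup-in-time Fournier--Guillin bound $\E[\sup_t\mathcal W_2(\bar\mu^N_t,\mathcal L(\bar X^1_t))^2]$, which only delivers $\sqrt{\varepsilon_N}$, thereby explaining the weaker rate. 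The main technical obstacle is precisely the empirical-vs-limit piece of the drift: the linear-functional-derivative expansion is what prevents the $\eta$-H\"older-in-measure regularity of $b$ from degrading the $N$-dependence from $\varepsilon_N$ to $\varepsilon_N^\eta$, which would otherwise nullify the improvement claimed by the statement.
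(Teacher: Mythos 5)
Your outline deviates from the paper at the very step that carries the whole weight of the argument, and the deviation opens a gap that cannot be repaired as stated. The claimed linearization $\Psi_n(s)^\eta\le K^{1-\eta}\Psi_n(s)$ is backwards: for $0<\eta<1$ and $\Psi_n\le K$ one has $\Psi_n^{1-\eta}\le K^{1-\eta}$, hence $\Psi_n^\eta=\Psi_n/\Psi_n^{1-\eta}\ge K^{\eta-1}\Psi_n$, a lower bound, not an upper bound. Consequently the Gronwall-type integral inequality $\Psi_n(t)\le \delta + C\int_0^t \Psi_n(s)^\eta\,\d s$ (with $\delta\asymp h^\eta+\varepsilon_N$) does not close: Bihari gives at best $\Psi_n(t)\le (\delta^{1-\eta}+C(1-\eta)t)^{1/(1-\eta)}$, which does \emph{not} vanish as $\delta\to0$ but converges to $(C(1-\eta)t)^{1/(1-\eta)}$. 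This is exactly the obstruction created by coefficients that are merely $\eta$-H\"older in $x$ and $m$ rather than Lipschitz, and it is the reason a direct It\^o computation on $|X^{1,n}_t-\bar X^1_t|^2$ cannot work here. Your linear functional derivative trick for the empirical-vs-limit piece is a useful idea and would indeed recover the sharp $N$-dependence in a Lipschitz setting, but it does not touch the H\"older-in-space term $\E[|X^{1,n}_s-\bar X^1_s|^{2\eta}]$, which is the real culprit.

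What the paper does instead is a Zvonkin-type (It\^o--Tanaka) transformation on the Wasserstein space: it introduces the solution $\mathscr V$ of the backward Kolmogorov PDE $(\partial_t+\mathcal L_t+\mathscr L_t)\mathscr V=b$ with $\mathscr V(T,\cdot,\cdot)=0$, together with its smooth approximations $\mathscr V^{(m)}$, and rewrites $\bar X^i_t-X^{i,n}_t$ through $x\mapsto x-\mathscr V^{(m)}(t,x,\mu)$ by applying It\^o along both trajectories. In this reformulation the H\"older drift disappears from the dynamics (it is absorbed, up to a vanishing-in-$m$ remainder $(\partial_s+\mathcal L_s+\mathscr L_s)\mathscr V^{(m)}-b$), and what remains involves only the spatial and measure derivatives of $\mathscr V^{(m)}$, which by Lemma~\ref{EstimUm} are Lipschitz in $(x,\mu)$ with constants proportional to a positive power of $T-t$. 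This produces a genuinely \emph{linear} Gronwall inequality for $\E[|\bar X^i_t-X^{i,n}_t|^2]$ and closes the argument on a short horizon (then iterated). The Fournier--Guillin estimate and the $\mathcal W_2$ triangle inequality you describe are used in the paper exactly as you propose, so that part of your plan matches; the missing ingredient is the PDE transformation that removes the sub-linear $\Psi_n^\eta$ term rather than attempting to linearize it.
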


Inspired by Theorem 3.6 \cite{CHAUDRUDERAYNAL20211}, we also establish a weak and strong convergence rate for the difference between the marginal law associated to \eqref{SDE:MCKEAN} and its approximation by the empirical measure of the system of particles \eqref{euler:scheme:dynamics} when both act on a general class of irregular maps $\Phi$ defined on $\mathcal{P}_2(\mathbb{R}^d)$. The proof of the following result is deferred to Section \ref{proof:Thm:Conv:EulerMaru:semigroup}.

\begin{theorem}[Weak and strong rates of convergence at the level of the semigroup]\label{Thm:Conv:EulerMaru:semigroup}
Assume that (\textbf{HR}) and (\textbf{HE}) hold. Let $\mathscr C^{2,\alpha}(\mathcal{P}_2(\mathbb{R}^d),\mathbb{R})$ be the set of continuous functions $\Phi : \mathcal{P}_2(\mathbb{R}^d) \rightarrow \mathbb{R}$ such that 
$$
\mathcal{P}_2(\mathbb{R}^d) \times \mathbb{R}^d \ni (m, \upsilon) \mapsto \frac{\delta \Phi}{\delta m}(m)(\upsilon) \quad \mbox{ and } \quad \mathcal{P}_2(\mathbb{R}^d) \times (\mathbb{R}^d)^2 \ni (m, \upsilon, \upsilon')\mapsto \frac{\delta^2 \Phi}{\delta m^2}(m)(\upsilon, \upsilon')
$$
\noindent exist, are continuous and satisfies the following regularity and growth properties: there exists $C<\infty$ such that for any $(m, \upsilon, \upsilon') \in \mathcal{P}_2(\mathbb{R}^d) \times (\mathbb{R}^d)^2$ and any bounded set $D \subset \mathbb{R}^d$,
\begin{align}
\sup_{ \upsilon, \upsilon' \in D, \, \upsilon \neq \upsilon' } \frac{|[\delta \Phi/\delta m](m)(\upsilon) - [\delta \Phi/\delta m](m)(\upsilon')|}{|\upsilon-\upsilon'|^{\alpha}} & \leq C (1 + M_2(m)), \label{local:holder:reg:first:order:linear:functional:deriv:phi} 
\end{align}
\begin{align}
\sup_{ \upsilon', \upsilon'' \in D, \, \upsilon' \neq \upsilon'' } \frac{|[\delta^2 \Phi/\delta m^2](m)(\upsilon, \upsilon') - [\delta^2 \Phi/\delta m^2](m)(\upsilon, \upsilon'')|}{|\upsilon'-\upsilon''|^{\alpha}} & \leq C (1 + |\upsilon| + M_2(m)), \label{local:holder:reg:second:order:linear:functional:deriv:phi} 
\end{align}
\noindent and
\begin{align}
\Big| \frac{\delta \Phi}{\delta m}(m)(\upsilon) \Big| + \Big| \frac{\delta^2 \Phi}{ \delta m^2}(m)(\upsilon, \upsilon') \Big|  & \leq C (1 + |\upsilon| + |\upsilon'| + M_2(m)). \label{growth:condition:deriv:phi}
\end{align}

 Then, there exist two constant $ C:=C(T, M_4(\mu), (\textbf{HR}), (\textbf{HE}))$ (which is finite if $M_4(\mu)=(\int_{\mathbb{R}^d} |x|^4 \, \mu(\d x))^{1/4}< \infty$) and $K(T, M_2(\mu), )<\infty$, $T\mapsto C(T, M_4(\mu), (\textbf{HR}), (\textbf{HE}))$, $K(T, M_2(\mu), (\textbf{HR}), (\textbf{HE}))$ being non-decreasing, such that for all $\Phi \in \mathscr C^{2, \alpha}(\mathcal{P}_2(\mathbb{R}^d),\mathbb{R})$ and all $t\in[0,T]$:
\begin{align}
|\mathbb{E}[\Phi(\mu_t^{N, n})] -\Phi(\mathcal{L}(X^{\xi}_t))|& \leq C \left(t^{\frac{\alpha}{2}-1} \frac{1}{N}  +  t^{\frac \alpha 2} h^{\frac{\eta}{2}}\right), \label{esti:sem:chaos}\\
\mathbb{E}\left[|\Phi(\mu_t^{N, n}) -\Phi(\mathcal{L}(X^{\xi}_t))|\right] &\leq  K \left(t^{\frac{\alpha-1}{2}} \mathbb{E}\Big[\mathcal{W}_2(\mu_0^{N}, \mu)^2\Big]^{\frac12} +  t^{\frac{\alpha}{2}} \big(h^{\frac{\eta}{2}} + \frac{1}{\sqrt{N}} \big) \right).  \label{strong:conv:chaos}
\end{align}
\end{theorem}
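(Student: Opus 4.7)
The plan is to rewrite the two errors through the semigroup associated with \eqref{SDE:MCKEAN} acting on real-valued functionals on $\mathcal{P}_2(\mathbb{R}^d)$. For a fixed terminal time $t \in (0,T]$, I would introduce
\begin{equation*}
U(s,m) := \mathbb{E}\bigl[\Phi(\mathcal{L}(X_t^{s,\xi}))\bigr], \qquad \mathcal{L}(\xi)=m,\ s\in[0,t],
\end{equation*}
so that $U(t,m) = \Phi(m)$ and $U(0,\mu)=\Phi(\mathcal{L}(X_t^\xi))$. Under (\textbf{HR})--(\textbf{HE}), the regularity theory developed in \cite{CHAUDRUDERAYNAL20221,CHAUDRUDERAYNAL20211} provides a classical solution of the backward Kolmogorov equation on the Wasserstein space, $\partial_s U + \mathcal{L}_s U = 0$ with terminal condition $\Phi$, together with quantitative estimates for $[\delta U/\delta m](s,m)(\upsilon)$ and $[\delta^2 U/\delta m^2](s,m)(\upsilon,\upsilon')$, their space-derivatives, and the corresponding Wasserstein derivatives $\partial_\mu U$ and $\partial_\upsilon \partial_\mu U$. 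Crucially, the local H\"older assumptions \eqref{local:holder:reg:first:order:linear:functional:deriv:phi}--\eqref{local:holder:reg:second:order:linear:functional:deriv:phi} should translate, through the heat-kernel smoothing granted by (\textbf{HE}), into time-singular bounds with smoothing exponent $\alpha$: morally, the $k$-th derivative costs $(t-s)^{-k/2}$ but is multiplied by a gain $(t-s)^{\alpha/2}$ inherited from the H\"older modulus of $\Phi$. Producing these bounds in a form integrable near $s=t$ will be the backbone of the argument and constitutes the main technical obstacle.

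Equipped with $U$, I would apply It\^o's formula on $\mathcal{P}_2(\mathbb{R}^d)$ (see e.g. Chapter 5 of \cite{cardel19}) to $s\mapsto U(s,\mu_s^{N,n})$ along the Euler--Maruyama particle system \eqref{euler:scheme:dynamics}. Using $\partial_s U + \mathcal{L}_s U = 0$, this produces the decomposition
\begin{equation*}
\mathbb{E}[\Phi(\mu_t^{N,n})] - \Phi(\mathcal{L}(X_t^\xi)) = \underbrace{\mathbb{E}[U(0,\mu_0^N)] - U(0,\mu)}_{\mathcal{E}_{\mathrm{cha}}(t)} \; + \underbrace{\int_0^t \mathbb{E}\bigl[(\mathcal{L}^{n}_s - \mathcal{L}_s)U(s,\mu_s^{N,n})\bigr]\, \mathrm{d}s}_{\mathcal{E}_{\mathrm{dis}}(t)},
\end{equation*}
where $\mathcal{L}^{n}_s$ denotes the generator associated to the frozen coefficients of the Euler scheme at time $s$. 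For $\mathcal{E}_{\mathrm{cha}}(t)$, since $\mu_0^N$ is the empirical measure of $N$ i.i.d. $\mu$-samples, a second-order Taylor expansion in the sense of Definition \ref{continuous:L:derivative:higher:order} gives a first-order term whose expectation vanishes by the normalization of $\delta U/\delta m$ and the independence of the $\xi^i$, and a remainder of the form $\tfrac12 \mathbb{E}\iint [\delta^2 U/\delta m^2](0,m_\lambda)(y,y')\,\mathrm{d}(\mu_0^N-\mu)^{\otimes 2}(y,y')\,\mathrm{d}\lambda$; by exchangeability and independence only the diagonal (of size $1/N$) survives, and the time-singular bound on $\delta^2 U/\delta m^2$ produces the announced $t^{\alpha/2-1}/N$ contribution. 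For $\mathcal{E}_{\mathrm{dis}}(t)$, on each subinterval $[t_k,t_{k+1})$ I would combine the H\"older estimate \eqref{holder:reg:coefficients} with the standard particle-wise time regularity $\mathbb{E}[|X^{i,n}_s - X^{i,n}_{k_n(s)}|^\eta] + \mathbb{E}[\mathcal{W}_2(\mu_s^{N,n},\mu_{k_n(s)}^{N,n})^\eta] \lesssim h^{\eta/2}$ and the smoothing bounds on $\partial_\mu U$ and $\partial_\upsilon\partial_\mu U$; integration in $s$ then yields $t^{\alpha/2} h^{\eta/2}$, thus delivering \eqref{esti:sem:chaos}.

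For the strong estimate \eqref{strong:conv:chaos}, I would keep the absolute value inside the expectation and decompose pathwise
\begin{equation*}
\Phi(\mu_t^{N,n}) - \Phi(\mathcal{L}(X_t^\xi)) = \bigl[\Phi(\mu_t^{N,n}) - U(0,\mu_0^N)\bigr] + \bigl[U(0,\mu_0^N) - U(0,\mu)\bigr].
\end{equation*}
The second bracket is controlled in $L^1$ by integrating $[\delta U/\delta m](0,\cdot)$ along a $\mathcal{W}_2$-geodesic between $\mu$ and $\mu_0^N$ and invoking its time-singular $\alpha$-H\"older modulus in the measure variable, producing $t^{(\alpha-1)/2}\mathbb{E}[\mathcal{W}_2(\mu_0^N,\mu)^2]^{1/2}$. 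For the first bracket, It\^o's formula applied pathwise produces a deterministic component equal (in expectation) to the integrand of $\mathcal{E}_{\mathrm{dis}}(t)$ and thus bounded by $t^{\alpha/2}h^{\eta/2}$, plus a martingale component driven by the $W^{i}$; the intrinsic $1/N$ factor attached to $\partial_\mu U$ in the particle dynamics makes its bracket of order $1/N$, yielding a $t^{\alpha/2}/\sqrt{N}$ contribution once $L^2$ norms and Cauchy--Schwarz are applied. Summing the three pieces gives \eqref{strong:conv:chaos}. The hard part throughout will be the first paragraph, namely pinning down the smoothing estimates for $U$ with the sharp singular scaling in $(t-s)$ as a function of the H\"older exponent $\alpha$ of $\Phi$ and rigorously justifying the Wasserstein It\^o formula for $U(s,\mu_s^{N,n})$ in the present irregular setting; once these inputs are secured, the remainder reduces to i.i.d./exchangeability bookkeeping for $\mathcal{E}_{\mathrm{cha}}(t)$ and H\"older-type Euler estimates for $\mathcal{E}_{\mathrm{dis}}(t)$.
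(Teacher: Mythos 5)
Your decomposition and overall architecture match the paper's: set $\mathscr U(s,m)=\Phi(\mathcal L(X_t^{s,\xi}))$ (your $U$ coincides with this since the law is deterministic given $\mathcal L(\xi)=m$, so the outer expectation is superfluous), split the error into a statistical-initial-data piece $\mathscr U(0,\mu_0^N)-\mathscr U(0,\mu)$ and a dynamics piece $\mathscr U(t,\mu_t^{N,n})-\mathscr U(0,\mu_0^N)$, control the former by a second-order functional expansion and the latter by It\^o on the Wasserstein space plus H\"older regularity of the coefficients and the smoothing estimates on $\partial_\mu\mathscr U$, $\partial_\upsilon\partial_\mu\mathscr U$, $\partial^2_\mu\mathscr U$. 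This is precisely the paper's Section 3.4. Two minor inaccuracies worth noting: (i) the It\^o expansion of $s\mapsto \mathscr U(s,\mu_s^{N,n})$ does not simply produce $\int_0^t(\mathcal L_s^n-\mathcal L_s)\mathscr U\,\d s$; it also contributes a term of order $N^{-1}$ from $\frac{1}{2N^2}\sum_i \mathrm{trace}(a\,\partial^2_\mu\mathscr U)$, which is absorbed into the $t^{\alpha/2-1}N^{-1}$ bound only because $t^{\alpha/2}\le T\,t^{\alpha/2-1}$ on $[0,T]$; and (ii) the stochastic integral has to be retained for the strong estimate, as you correctly note later.

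The genuine gap is that under $(\textbf{HR})$--$(\textbf{HE})$ alone the second-order Wasserstein derivative $\partial^2_\mu\mathscr U$ (and hence $\mathcal C^{1,2}_f$-regularity) is \emph{not} guaranteed to exist, as the paper emphasizes at the start of Section 3.3.1; consequently, the Wasserstein It\^o formula cannot be applied to $\mathscr U(s,\mu_s^{N,n})$ directly, and the bound on $\delta^2\mathscr U/\delta m^2$ needed in the chaos term also cannot be invoked directly. You correctly flag this as ``the hard part'' but offer no resolution. The paper resolves it by replacing $\mathscr U$ with the regularized family $\mathscr U^{(m)}(s,\mu)=\Phi(\mathcal L(X_t^{s,\xi,(m)}))$ built from the Picard-type iterates \eqref{Xm1}--\eqref{Xm2} with frozen measure input, for which $\mathcal C^{1,2,2}_f$-regularity and \emph{uniform-in-$m$} Gaussian bounds (Lemmas \ref{lemma:gaussian:bounds:pm}--\ref{lemma:estimate:deriv:um}) are available; one then applies It\^o to $\mathscr U^{(m)}$, derives \eqref{bound:Um:before:limit} and \eqref{weak:error:bound:initial:condition} uniformly in $m$, and passes to the limit $m\uparrow\infty$ using a cut-off in $R$ and the locally-uniform convergence of $(\partial_t+\mathscr L_s)\mathscr U^{(m)}\to 0$. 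Without this regularization-and-limit mechanism your It\^o step is not justified, so the proposal as written is incomplete at its load-bearing point.
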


\begin{remark} For $\alpha=1$, 
$$
\big\{\Phi : \mathcal{P}(\mathbb{R}^d)\to \mathbb{R},\, \Phi(\mu) = \int_{\mathbb{R}^d} \varphi(x) \, \d\mu(x),\, \varphi \, {\rm is\ 1-Lipschitz}\big\} \subset \mathscr C^{2, 1}(\mathcal{P}_2(\mathbb{R}^d),\mathbb{R}^d)
$$ 

\noindent so that the inequality \eqref{esti:sem:chaos} implies the convergence in first order Wasserstein distance owing to the Kantorovitch-Rubinstein duality theorem:
$$
\mathcal{W}_1(\mathbb{E}[\mu^{N, n}_t], \mu_t) \leq C \left( t^{-\frac12} N^{-1}+ t^{\frac{1}{2}} h^{\frac{\eta}{2}} \right).
$$
\end{remark}

Our final result pertains to the weak error at the semigroup level when the test function $\Phi$ and the coefficients are sufficiently smooth. In this setting, we establish that the weak convergence rate is optimal, achieving an error of order $N^{-1}+h$. The proof of our last result is deferred to Section \ref{proof:thm:error:upper:bound:smooth:case}.

\begin{theorem}\label{thm:error:upper:bound:smooth:case}
Assume that (\textbf{HFR}) holds. Let $\Phi:\mathcal{P}_2(\mathbb{R}^d)\rightarrow \mathbb{R}$ be such that the derivatives
$$
\partial_\mu^{i}\Phi(\mu)(\upsilon_1), \quad \partial_{\upsilon_j} \partial^{i}_\mu \Phi(\mu)(\upsilon_1), \quad \partial^{(i, j)}_{\mu} \Phi(\mu)( \upsilon_1, \upsilon_2), \quad 1\leq i, j \leq d,
$$

\noindent exist, are continuous and bounded on $\mathcal{P}_2(\mathbb{R}^d)\times \mathbb{R}^d$ or $\mathcal{P}_2(\mathbb{R}^d)\times (\mathbb{R}^d)^2$. Assume additionally that the derivatives $ \mathcal{P}_2(\mathbb{R}^d)\times \mathbb{R}^d \ni (\mu, \upsilon) \mapsto \partial^{i}_\mu \Phi(\mu)(\upsilon), \, \partial_{\upsilon_j} \partial^{i}_\mu \Phi(\mu)(\upsilon)$ are Lipschitz continuous. 

Then, for any $T>0$, there exists a constant $C:=C(T, M_2(\mu), (\textbf{HFR}))$, where  $T\mapsto C(T, M_2(\mu), (\textbf{HFR}))$ is non-decreasing and depends also on the aforementioned derivatives of $\Phi$, such that
\begin{equation}\label{error:upper:bound:smooth:case}
\sup_{0\leq t \leq T} |\mathbb{E}[\Phi(\mu^{N,n}_t)-\Phi(\mathcal{L}(X^{\xi}_t))]| \leq C (N^{-1} +h).
\end{equation}
\end{theorem}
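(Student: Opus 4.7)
The plan rests on the classical master equation / flow semigroup approach. For a fixed terminal time $t\in(0,T]$, introduce
\[
U(s,\mu):=\Phi(\mathcal{L}(X^{s,\xi}_t)),\qquad (s,\mu)\in[0,t]\times\mathcal{P}_2(\mathbb{R}^d),\ \mathcal{L}(\xi)=\mu,
\]
so that $U(t,\mu)=\Phi(\mu)$ and $U(0,\mu)=\Phi(\mathcal{L}(X^\xi_t))$. Under (\textbf{HFR}) together with the stated smoothness of $\Phi$, flow-differentiability results for mean-field SDEs with smooth coefficients (see the references cited in the introduction, in particular \cite{cardel19,chassagneux:szpruch:tse,CHAUDRUDERAYNAL20221}) deliver the existence and continuity of $\partial_s U$, $\partial_\mu U(s,\mu)(\upsilon)$, $\partial_\upsilon\partial_\mu U(s,\mu)(\upsilon)$ and $\partial^2_\mu U(s,\mu)(\upsilon,\upsilon')$ with uniform boundedness and enough Lipschitz regularity to apply Itô on $\mathcal{P}_2(\mathbb{R}^d)$, together with the master equation
\[
\partial_s U(s,\mu)+\int_{\mathbb{R}^d}b(s,x,\mu)\cdot\partial_\mu U(s,\mu)(x)\,\mu(\d x)+\tfrac12\int_{\mathbb{R}^d}\mathrm{tr}\bigl(a(s,x,\mu)\,\partial_\upsilon\partial_\mu U(s,\mu)(x)\bigr)\,\mu(\d x)=0.
\]
Establishing this regularity of $U$ is the first preliminary step.

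Next I would split the error via the telescoping identity
\[
\mathbb{E}[\Phi(\mu^{N,n}_t)-\Phi(\mathcal{L}(X^\xi_t))]=\mathbb{E}[U(t,\mu^{N,n}_t)-U(0,\mu^N_0)]+\mathbb{E}[U(0,\mu^N_0)-U(0,\mu)].
\]
The initialisation term is $O(1/N)$: since the $\xi^i$ are i.i.d.\ of law $\mu$, a second-order linear functional derivative expansion of $\mu'\mapsto U(0,\mu')$ along $\lambda\mapsto(1-\lambda)\mu+\lambda\mu^N_0$ kills the centred linear-in-$(\mu^N_0-\mu)$ part in expectation, while the quadratic remainder is bounded by $N^{-1}\sup|\delta^2 U/\delta m^2|$ times a second moment of $\mu$. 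For the dynamic term, I view $V(s,\mathbf{x}):=U(s,\tfrac1N\sum_i\delta_{x_i})$ as a smooth function on $[0,t]\times(\mathbb{R}^d)^N$ and apply the classical Itô formula to $V(s,X^{1,n}_s,\dots,X^{N,n}_s)$ along the Euler system \eqref{euler:scheme:dynamics}. Using $\partial_{x_i}V=\tfrac1N\partial_\mu U(s,\mu^{N,n}_s)(x_i)$ and $\partial^2_{x_ix_i}V=\tfrac1N\partial_\upsilon\partial_\mu U(s,\mu^{N,n}_s)(x_i)+\tfrac1{N^2}\partial^2_\mu U(s,\mu^{N,n}_s)(x_i,x_i)$, taking expectation to cancel the stochastic integral, and subtracting the master equation evaluated at $\mu^{N,n}_s$ yields
\[
\mathbb{E}[U(t,\mu^{N,n}_t)-U(0,\mu^N_0)]=\mathbb{E}\!\int_0^t\bigl(\mathscr{E}^{\mathrm{part}}_s+\mathscr{E}^{\mathrm{time}}_s\bigr)\,\d s,
\]
where $\mathscr{E}^{\mathrm{part}}_s=\tfrac1{2N^2}\sum_i\mathrm{tr}\bigl(a(k_n(s),X^{i,n}_{k_n(s)},\mu^{N,n}_{k_n(s)})\,\partial^2_\mu U(s,\mu^{N,n}_s)(X^{i,n}_s,X^{i,n}_s)\bigr)$ is uniformly $O(1/N)$ thanks to bounded $a$ and bounded $\partial^2_\mu U$, while the time-discretisation residual reads
\begin{align*}
\mathscr{E}^{\mathrm{time}}_s
&=\tfrac{1}{N}\sum_i\bigl[b(k_n(s),X^{i,n}_{k_n(s)},\mu^{N,n}_{k_n(s)})-b(s,X^{i,n}_s,\mu^{N,n}_s)\bigr]\cdot\partial_\mu U(s,\mu^{N,n}_s)(X^{i,n}_s)\\
&\quad+\tfrac{1}{2N}\sum_i\mathrm{tr}\Bigl(\bigl[a(k_n(s),X^{i,n}_{k_n(s)},\mu^{N,n}_{k_n(s)})-a(s,X^{i,n}_s,\mu^{N,n}_s)\bigr]\partial_\upsilon\partial_\mu U(s,\mu^{N,n}_s)(X^{i,n}_s)\Bigr).
\end{align*}

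The final and principal obstacle is upgrading the naive $O(h^{1/2})$ bound on $\mathbb{E}\int_0^t\mathscr{E}^{\mathrm{time}}_s\,\d s$ -- obtained from Lipschitz regularity of $b,a$ combined with $\mathbb{E}[|X^{i,n}_s-X^{i,n}_{k_n(s)}|^2]^{1/2}=O(h^{1/2})$ -- to the optimal $O(h)$. This I would do via a Talay--Tubaro-type second Itô expansion: on every elementary interval $[k_n(s),s]$ I reapply Itô's formula in the Wasserstein sense to $r\mapsto b(r,X^{i,n}_r,\mu^{N,n}_r)$ and its $a$-counterpart, multiplied by the frozen test-function factor $\partial_\mu U(s,\mu^{N,n}_s)(X^{i,n}_s)$ whose dependence on $X^{i,n}_s$ is in turn expanded via $\partial_\upsilon\partial_\mu U$. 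The derivatives $\partial_t b,\partial_x b,\partial^2_x b,\partial_\mu b,\partial_\upsilon\partial_\mu b$ and their $a$-analogues furnished by (\textbf{HFR}) keep all integrands uniformly bounded, the martingale contributions vanish in expectation, and the resulting double time-integrals contribute $O(h)$ once summed with the $1/N$ prefactor and integrated over $s$. Assembling the three pieces -- $O(1/N)$ from the initialisation, $O(1/N)$ from $\mathscr{E}^{\mathrm{part}}$ and $O(h)$ from $\mathscr{E}^{\mathrm{time}}$ -- yields \eqref{error:upper:bound:smooth:case}. The technical heart of the proof lies in propagating uniform-in-time control of the mixed measure-space derivatives of $U$ along the empirical-measure flow, which is precisely what assumption (\textbf{HFR}) is tailored to supply together with the hypotheses on $\Phi$.
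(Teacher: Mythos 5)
Your proposal is correct and follows essentially the same route as the paper: the decomposition through the Kolmogorov solution $\mathscr{U}(s,\mu)=\Phi(\mathcal{L}(X^{s,\xi}_t))$ (regularity established via flow derivatives of the decoupled McKean--Vlasov flow, as in Lemma \ref{existence:and:reg:sol:kolmogorov:smooth:setting}), the $O(N^{-1})$ initialization bound via a second-order flat-derivative expansion, and the Talay--Tubaro-type second It\^o expansion to upgrade the naive $O(h^{1/2})$ time-discretization residual to $O(h)$. The one detail you gloss over --- that the test-function factor $\partial_\mu\mathscr{U}(s,\mu^{N,n}_s)(X^{i,n}_s)$ must first be replaced by its $\mathcal{F}_{k_n(s)}$-measurable frozen counterpart $\partial_\mu\mathscr{U}(s,\mu^{N,n}_{k_n(s)})(X^{i,n}_{k_n(s)})$ before the martingale cancellation in the second It\^o expansion can be invoked, the difference being a product of two $O(h^{1/2})$ increments and hence already $O(h)$ --- is exactly the ${\rm E}^{i,n,N}_1/{\rm E}^{i,n,N}_2$ splitting the paper carries out explicitly.
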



\section{Proof of the main results}\label{section:proof:mani:results}
\subsection{Auxiliary results for the proof of Theorem \ref{Thm:Conv:EulerMaru}}\label{AuxResMaruyama}
In the spirit of \cite{CHAUDRUDERAYNAL20221}, the central idea of the proof of Theorem \ref{Thm:Conv:EulerMaru} is to leverage the unique solution $\mathscr{V}:(t,x,\mu)\in[0,T]\times \mathbb{R}^d\times\mathcal{P}_2(\mathbb{R}^d)\mapsto \mathbb{R}^d$ of the following backward Kolmogorov PDE on the Wasserstein space:
 \begin{equation}\label{PDEU}
(\partial_t+\mathcal{L}_t + \mathscr{L}_t) \mathscr{V}(t,x,\mu)=b(t,x,\mu), \quad \mathscr{V}(T,\cdot,\cdot)=0_d
 \end{equation}
 
 \noindent where the operators $\mathcal{L}_t$ and $\mathscr{L}_t$ are defined  by
 \begin{equation}\label{Lt}
     \begin{aligned}
         \mathcal{L}_t g(t,x,\mu)&=\sum_{i=1}^d b_i(t,x,\mu)\partial_{x_i}g(t,x,\mu)+\frac{1}{2}\sum_{i,j=1}^da_{i,j}(t,x,\mu)\partial^2_{x_i x_j}g(t,x,\mu)\\
        \mathscr{L}_t g(t,x,\mu) & = \int_{\mathbb{R}^d}\Big\{ \sum_{i=1}^d b_i(t,\upsilon,\mu)\partial^{i}_\mu g(t,x,\mu)(\upsilon) \\
        & \quad +\frac{1}{2}\sum_{i,j=1}^da_{i,j}(t,z,\mu)\partial_{\upsilon_j}\partial^i_\mu g(t,x,\mu)(z)\Big\} \, \mu(\d \upsilon).
     \end{aligned}
 \end{equation}

It follows from Theorem 3.8 in \cite{CHAUDRUDERAYNAL20221} that there exists a unique solution $\mathscr{V}\in\mathcal{C}^{1,2,2}([0,T)\times\mathbb{R}^d\times \mathcal{P}_2(\mathbb{R}^d)) \cap \mathcal{C}([0,T]\times \mathbb{R}^d \times \mathcal{P}_2(\mathbb{R}^d))$ to the PDE \eqref{PDEU} which satisfies for any $ (t,x,\mu)\in [0,T)\times \mathbb{R}^d\times\mathcal{P}_2(\mathbb{R}^d)$, 
$$
\mathscr{V}(t,x,\mu)=\mathbb{E}\Big[\int_t^T b(s,X^{t,x,\mu}_s,\mathcal{L}(X^{t,\xi}_s)) \, \d s \Big]=\int_t^T \int_{\mathbb{R}^d} b(s, z, \mathcal{L}(X^{t, \xi}_s)) \, p(\mu, t, s, x, z) \, \d s\d z, 
$$

\noindent recalling that $z\mapsto p(\mu,s,t, x, z)$ stands for the density function of $X_s^{t, x, \mu}$.

Unfortunately, as for the existence of the second order derivative $\partial^2_\mu \mathscr{V}$ which is strongly required for our purpose, there is no guarantee that it exists under the sole assumptions \textbf{(HE)} and \textbf{(HR)}. As proposed in \cite{CHAUDRUDERAYNAL20211}, the key idea is to proceed using the following approximation argument that we now briefly present. For an integer $m$, we consider the following SDE with dynamics:
\begin{equation}\label{Xm1}
    X^{s,\xi, (m+1)}_t = \xi+\int_s^t b(r,X^{s,\xi, (m+1)}_r,\mathcal{L}(X^{s, \xi, (m)}_r)) \, \d r+\int_s^t \sigma(r,X^{s,\xi, (m+1)}_r, \mathcal{L}(X^{s, \xi, (m)}_r)) \, \d W_r,
\end{equation}

\noindent where we set $\mathcal{L}(X^{s,\xi,(0)}_r) = \nu$, for any $r\in [0,T]$, for some probabilty measure $\nu \neq \mu$.

Under \textbf{(HE)} and \textbf{(HR)}, there exists a unique weak solution to the SDE \eqref{Xm1} for any given initial condition $(s, \xi)$. We also introduce the associated decoupled stochastic flow or characteristics given by the unique weak solution to the SDE with dynamics
 \begin{equation}\label{Xm2}
     X^{s,x,\mu,(m+1)}_t=x+\int_s^t b(r,X^{s,x,\mu,(m+1)}_r,\mathcal{L}(X^{s, \xi, (m)}_r)) \, \d r + \int_s^t \sigma(r,X^{s,x,\mu,(m+1)}_r,\mathcal{L}(X^{s, \xi, (m)}_r)) \, \d W_r.
 \end{equation}

It is clear that, by weak uniqueness, $X^{s,x,\mu,(m)}_t$ makes sense since the law of the random vector $X^{\xi,(m)}_t$ depends only on the initial condition $\xi$ through its law $\mu$. We then define the map
\begin{equation}\label{def:vm}
    \mathscr{V}^{(m)}:(t,x,\mu)\in[0,T]\times \mathbb{R}^d\times \mathcal{P}_2(\mathbb{R}^d)\mapsto \mathscr{V}^{(m)}(t,x,\mu)=\mathbb{E}\Big[\int_t^T b(s,X^{t,x,\mu, (m)}_s,\mathcal{L}(X^{t, \xi, (m)}_s)) \, \d s \Big].
\end{equation}

The proof of the following technical result follows from Theorem 3.8 in \cite{CHAUDRUDERAYNAL20221} (see also the proof of Theorem 3.6 in \cite{CHAUDRUDERAYNAL20211}) and is thus omitted.
\begin{lemma}\label{EstimUm}
    (On the regularity properties of $\mathscr{V}^{(m)}$) For any positive integer $m$, the following properties holds:
    \begin{itemize}
        \item [(i)]$\mathscr{V}^{(m)}$ is in $\mathcal{C}^{1,2,2}_f([0,T)\times \mathbb{R}^d\times \mathcal{P}_2(\mathbb{R}^d))$.
        \item [(ii)]There exists a constant $K=K(T, (\textbf{HE}),(\textbf{HR}))< \infty$, $T\mapsto K(T, (\textbf{HE}),(\textbf{HR}))$ being non-decreasing, such that for all positive integer $m$ and all $ (t,x,\mu, \upsilon, \upsilon')\in[0,T)\times\mathbb{R}^d\times\mathcal{P}_2(\mathbb{R}^d)\times(\mathbb{R}^d)^2$ and all $n=0,1,2:$
        $$
        \lvert \partial_\mu \mathscr{V}^{(m)}(t,x,\mu)(\upsilon)\rvert\leq K(T-t)^{\frac{1+\eta}{2}}, \quad \lvert \partial_x \partial_\mu \mathscr{V}^{(m)}(t,x,\mu)(\upsilon) \lvert + \lvert \partial^2_\mu \mathscr{V}^{(m)}(t,x,\mu)(\upsilon,\upsilon')\rvert\leq K(T-t)^{\frac{\eta}{2}}, 
        $$
        $$
        \lvert \partial_\upsilon \partial_\mu \mathscr{V}^{(m)}(t,x,\mu)(\upsilon)\rvert\leq K(T-t)^{\frac{\eta}{2}}, \quad \lvert\partial_t \mathscr{V}^{(m)}(t,x,\mu)(\upsilon)\rvert \leq K, \quad \lvert\partial^n_x \mathscr{V}^{(m)}(t,x,\mu)\rvert \leq K(T-t)^{\frac{2-n+\eta}{2}}.
        $$
        \item [(iii)]
   For all $ (t,x,\mu)\in[0,T)\times\mathbb{R}^d\times\mathcal{P}_2(\mathbb{R}^d), \quad \lim_{m\uparrow \infty} \mathscr{V}^{(m)}(t,x,\mu) = \mathscr{V}(t,x,\mu)$.
   \item [(iv)] $\left\{ (\partial_t+\mathcal{L}_t + \mathscr{L}_t) \mathscr{V}^{(m)}, m\geq1 \right\}$ converges to $(\partial_t+\mathcal{L}_t + \mathscr{L}_t) \mathscr{V} = b$,  locally uniformly along a subsequence, i.e. the convergence holds for all $(t,x,\mu)\in\mathcal{K}\subseteq[0,T) \times \mathbb{R}^d \times \mathcal{P}_2(\mathbb{R}^d)$, where $\mathcal{K}$ is compact for the canonical topology.
     \end{itemize}
\end{lemma}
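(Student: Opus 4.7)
\subsection*{Proof proposal}

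The plan is to exploit the fact that at each Picard iterate, the coefficients of the decoupled SDE \eqref{Xm2} depend on the initial law $\mu$ only through the deterministic curve $\nu^{(m)}_\cdot := \mathcal{L}(X^{s,\xi,(m)}_\cdot)$ in $\mathcal{P}_2(\mathbb{R}^d)$. Consequently, for fixed $\mu$, the process $X^{s,x,\mu,(m+1)}$ is a standard (non-McKean--Vlasov) diffusion with drift $b(\cdot,\cdot,\nu^{(m)}_\cdot)$ and diffusion $\sigma(\cdot,\cdot,\nu^{(m)}_\cdot)$, whose coefficients are bounded, uniformly elliptic by (\textbf{HE}), and $(\eta/2,\eta)$-H\"older continuous in $(t,x)$ by (\textbf{HR})(i). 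The parametrix machinery of \cite{CHAUDRUDERAYNAL20221} then produces a transition density $p^{(m+1)}(\mu,t,s,x,z)$ satisfying two-sided Gaussian bounds and the usual Aronson-type estimates on its space derivatives; the $\mathcal{C}^{1,2,2}_f$ regularity in (i) is built into the joint continuity of the parametrix series.

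For (ii), I would start from the Feynman--Kac representation
\begin{equation*}
\mathscr{V}^{(m+1)}(t,x,\mu) = \int_t^T \int_{\mathbb{R}^d} b(s,z,\nu^{(m+1)}_s) \, p^{(m+1)}(\mu,t,s,x,z) \, \d z \, \d s,
\end{equation*}
and differentiate under the integral sign. Space derivatives of order $n=1,2$ on $p^{(m+1)}$ introduce factors $(s-t)^{-n/2}$; after a standard centering argument (subtract $b(s,x,\nu^{(m+1)}_s)$ inside the integral and use the $\eta$-H\"older regularity of $z\mapsto b(s,z,\nu^{(m+1)}_s)$), the singularity is tamed to $(s-t)^{(\eta-n)/2}$, which integrates over $[t,T]$ to give the bound $K(T-t)^{(2-n+\eta)/2}$. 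For derivatives in $\mu$, two mechanisms enter: the $\mu$-dependence through $\nu^{(m+1)}_s$ inside $b$ (handled by (\textbf{HR})(iii) together with the Lipschitz stability of $\mu \mapsto \nu^{(m)}$) and the $\mu$-dependence through $p^{(m+1)}$ itself via $\nu^{(m)}_\cdot$. The linear functional derivatives $\delta p^{(m+1)}/\delta m$ and $\delta^2 p^{(m+1)}/\delta m^2$ are obtained by differentiating the parametrix series term by term and satisfy Gaussian bounds with the appropriate loss of smoothing, which after the centering trick produces the claimed exponents $(1+\eta)/2$ for $\partial_\mu \mathscr{V}^{(m)}$ and $\eta/2$ for $\partial_x\partial_\mu \mathscr{V}^{(m)}$, $\partial_\upsilon \partial_\mu \mathscr{V}^{(m)}$ and $\partial^2_\mu \mathscr{V}^{(m)}$. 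The bound $|\partial_t \mathscr{V}^{(m)}| \leq K$ is then read off from the inhomogeneous linear backward PDE satisfied by $\mathscr{V}^{(m+1)}$, namely $\partial_t \mathscr{V}^{(m+1)} + L^{(m)}_{t,\mu} \mathscr{V}^{(m+1)} = -b(t,x,\nu^{(m+1)}_t)$, where $L^{(m)}_{t,\mu}$ is the diffusion generator with frozen coefficients $b(\cdot,\cdot,\nu^{(m)}_\cdot)$, $a(\cdot,\cdot,\nu^{(m)}_\cdot)$.

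For (iii), the Picard scheme is classically known to converge under (\textbf{HE}) and (\textbf{HR}): a contraction argument for curves in $\mathcal{P}_2(\mathbb{R}^d)$ gives $\sup_{r\in[t,T]} \mathcal{W}_1(\nu^{(m)}_r,\mathcal{L}(X^{t,\xi}_r)) \to 0$ as $m\to\infty$. Using \eqref{holder:reg:coefficients} and standard stability for linear SDEs, one deduces $\mathcal{L}(X^{t,x,\mu,(m)}_s) \to \mathcal{L}(X^{t,x,\mu}_s)$, and dominated convergence in the Feynman--Kac formula gives $\mathscr{V}^{(m)}(t,x,\mu) \to \mathscr{V}(t,x,\mu)$. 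For (iv), the uniform-in-$m$ bounds from (ii) make $\{\mathscr{V}^{(m)}\}$ together with its first two space derivatives and first two measure derivatives locally equicontinuous and uniformly bounded on compact subsets of $[0,T)\times \mathbb{R}^d \times \mathcal{P}_2(\mathbb{R}^d)$. A diagonal Arzel\`a--Ascoli extraction along an exhausting sequence of compacts furnishes a subsequence along which all these derivatives converge locally uniformly, and (iii) forces the limits to coincide with the corresponding derivatives of $\mathscr{V}$. Injecting these limits into the definitions \eqref{Lt} of $\mathcal{L}_t, \mathscr{L}_t$ and using the PDE \eqref{PDEU} satisfied by $\mathscr{V}$, we conclude that $(\partial_t + \mathcal{L}_t + \mathscr{L}_t) \mathscr{V}^{(m)} \to b$ locally uniformly along the extracted subsequence.

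The main obstacle is the uniform-in-$m$ bound on the second-order measure derivative $\partial^2_\mu \mathscr{V}^{(m)}$: differentiating the parametrix twice in $\mu$ generates terms whose time singularity would a priori exceed the integrability threshold, and these can only be absorbed by carefully exploiting the H\"older regularity in (\textbf{HR})(ii)--(iii) together with the non-degeneracy (\textbf{HE}). The technical constructions of \cite{CHAUDRUDERAYNAL20221, CHAUDRUDERAYNAL20211} are designed precisely to close this bootstrap in a manner that is independent of $m$, which is the crucial input we rely on.
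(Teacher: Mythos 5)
The paper does not give a proof of this lemma; it states that the result follows from Theorem~3.8 of \cite{CHAUDRUDERAYNAL20221} (together with the proof of Theorem~3.6 of \cite{CHAUDRUDERAYNAL20211}) and omits the details. Your sketch reconstructs the intended argument quite faithfully for parts (i)--(iii): the key structural observation that the $m$-th Picard iterate freezes the measure flow into a deterministic curve $\nu^{(m)}$, so that $X^{t,x,\mu,(m)}$ is a classical H\"older-coefficient diffusion amenable to the parametrix, is exactly the mechanism used in the references; the Feynman--Kac representation with the centering trick does reproduce the exponents for $n=1,2$ and for the $\mu$-derivatives once one uses the Gaussian bounds $|\partial^\ell_\mu p_m|\lesssim (s-t)^{-(\ell-\eta)/2}g(c(s-t),\cdot)$ and the space--time inequality \eqref{space:time:inequality}.

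Two points deserve attention. First, your justification of (iv) has a genuine gap: you invoke the Arzel\`a--Ascoli theorem on the derivative families using only the uniform $L^\infty$ bounds of (ii), but uniform boundedness of $\partial^2_x\mathscr{V}^{(m)}$, $\partial_\upsilon\partial_\mu\mathscr{V}^{(m)}$, $\partial^2_\mu\mathscr{V}^{(m)}$ and $\partial_t\mathscr{V}^{(m)}$ does not give equicontinuity of those highest-order derivatives, which is what the extraction argument requires. To close this, one must invoke the H\"older-in-$(x,\mu,\upsilon)$ estimates on the parametrix derivatives supplied by the references (or, equivalently, use their stability estimates to prove convergence of the derivatives directly, bypassing compactness). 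As written your sketch silently assumes the needed modulus of continuity. There is also a subtlety you do not address: $\mathcal{P}_2(\mathbb{R}^d)$ is not locally compact, so the ``compact subsets'' in the diagonal extraction have to be taken in the sense of Wasserstein-compact sets (tight and uniformly integrable), and the equicontinuity in the measure variable must be established relative to $\mathcal{W}_2$. Second, a small remark on (ii): the recipe you state (``the singularity is tamed to $(s-t)^{(\eta-n)/2}$, integrate'') is correct for $n=1,2$ where $\int \partial^n_x p_m \, \d z=0$ enables the centering, but for $n=0$ no centering is available (since $\int p_m\,\d z=1$), so the integrand is merely $O(1)$ as $s\downarrow t$ and one only gets $|\mathscr{V}^{(m)}|\lesssim (T-t)$, not $(T-t)^{1+\eta/2}$; the $n=0$ exponent in the lemma therefore cannot be obtained by your stated argument. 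This appears to be a minor inaccuracy in the lemma as stated (the $n=0$ case is never used downstream), but your sketch does not flag it.
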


\subsection{Proof of Theorem \ref{Thm:Conv:EulerMaru}}\label{proof:Thm:Conv:EulerMaru}

\noindent \emph{Step 1:} \\

Applying It\^o's rule to $\mathscr{V}^{(m)}(t,X^{i,n}_t,\mu^{N,n}_t)$, where $\mathscr{V}^{(m)}$ is defined by \eqref{def:vm}, we get
\begin{equation*}
\begin{aligned}
& \mathscr{V}^{(m)}(t,X^{i,n}_t,\mu^{N,n}_t) = \mathscr{V}^{(m)}(0,\xi^i,\mu^N_0)+\int_0^t(\partial_s+\mathcal{L}_s + \mathscr{L}_s)\mathscr{V}^{(m)}(s,X^{i,n}_s,\mu^{N,n}_s) \, \d s\\
&+\int_0^t \partial_x \mathscr{V}^{(m)}(s,X^{i,n}_s,\mu^{N,n}_s)\cdot \sigma(k_n(s),X^{i,n}_{k_n(s)},\mu^{N,n}_{k_n(s)}) \, \d W^{(i)}_s\\
&+\int_0^t(b(k_n(s),X^{i,n}_{k_n(s)},\mu^{N,n}_{k_n(s)})-b(s,X^{i,n}_s,\mu^{N,n}_s))\cdot \partial_x \mathscr{V}^{(m)}(s,X^{i,n}_s,\mu^{N,n}_s) \, \d s\\
&+\frac{1}{N}\sum_{l=1}^N\int_0^t (b(k_n(s),X^{i,n}_{k_n(s)},\mu^{N,n}_{k_n(s)})-b(s,X^{i,n}_s,\mu^{N,n}_s))\cdot \partial_\mu \mathscr{V}^{(m)}(s,X^{i,n}_s,\mu^{N,n}_s)(X^{l,n}_s) \, \d s\\
&+\frac{1}{2N}\sum_{l=1}^N\int_0^t \text{trace}\big([a(k_n(s),X^{i,n}_{k_n(s)},\mu^{N,n}_{k_n(s)})-a(s,X^{i,n}_s,\mu^{N,n}_s)]\partial_v\partial_\mu \mathscr{V}^{(m)}(s,X^{i,n}_s,\mu^{N,n}_s)(X^{l,n}_s)\big) \, \d s\\
&+\frac{1}{N}\sum_{l=1}^N\int_0^t \partial_\mu \mathscr{V}^{(m)}(s,X^{i,n}_s,\mu^{N,n}_s)(X^{l,n}_s) \cdot \sigma(k_n(s),X^{i,n}_{k_n(s)},\mu^{N,n}_{k_n(s)}) \, \d W^{(l)}_s\\
&+\frac{1}{2N^2}\sum_{l=1}^N\int_0^t \text{trace}(a(k_n(s),X^{i,n}_{k_n(s)},\mu^{N,n}_{k_n(s)})\partial^2_\mu \mathscr{V}^{(m)}(s,X^{i,n}_s,\mu^{N,n}_s)(X^{l,n}_s,X^{l,n}_s)) \, \d s
\end{aligned}
\end{equation*}
\normalsize

\noindent and similarly
\begin{equation*}
    \begin{aligned}
        \mathscr{V}^{(m)}(t,\bar{X}^i_t,\mathcal{L}(X_t^{\xi})) & = \mathscr{V}^{(m)}(0,\xi^i,\mu_0)+\int_0^t\partial_x \mathscr{V}^{(m)}(s,\bar{X}^i_s,\mathcal{L}(X_s^{\xi}))\cdot\sigma(s,\bar{X}^i_s,\mathcal{L}(X_s^{\xi})) \, \d W^{(i)}_s\\
        &+\int_0^t(\partial_s+\mathcal{L}_s + \mathscr{L}_s) \mathscr{V}^{(m)}(s,\bar{X}^i_s,\mathcal{L}(X_s^{\xi})) \, \d s.
    \end{aligned}
\end{equation*}

Hence, recalling the dynamics of $(\bar X^i_t)_{t\geq 0}$ and $(X^{i,n}_t)_{t\geq 0}$, and noticing that
\begin{align*}
    \bar X^i_t-X^{i,n}_t&=\big(\bar{X}^i_t- \mathscr{V}^{(m)}(t,\bar{X}^i_t,\mathcal{L}(X_t^{\xi}))\big)-\big(X^{i,n}_t-\mathscr{V}^{(m)}(t,X^{i,n}_t,\mu^{N,n}_t)\big)\\
 &+\mathscr{V}^{(m)}(t,\bar{X}^i_t,\mathcal{L}(X_t^{\xi}))- \mathscr{V}^{(m)}(t,X^{i,n}_t,\mu^{N,n}_t)
\end{align*}
we obtain, 
\small
\begin{equation}\label{DiffTwoProc}
    \begin{aligned}
        &\bar{X}^i_t-X^{i,n}_t = \mathscr{V}^{(m)}(0,\xi^i,\mu_0^N)-\mathscr{V}^{(m)}(0,\xi^i,\mu_0)+ \mathscr{V}^{(m)}(t,\bar{X}^i_t,\mathcal{L}(X_t^{\xi}))- \mathscr{V}^{(m)}(t,X^{i,n}_t,\mu^{N,n}_t)\\
    &+\int_0^t \Bigg[\big((\partial_x \mathscr{V}^{(m)}-\mathbbm{1})\sigma\big)(s,{X}^{i,n}_s,\mu^{N,n}_s)-\big((\partial_x \mathscr{V}^{(m)}-\mathbbm{1})\sigma\big)(s,\bar{X}^i_s, \mathcal{L}(X_s^{\xi}))\Bigg]\d W^{(i)}_s\\  &+\int_0^t\Bigg[((\partial_s+\mathcal{L}_s + \mathscr{L}_s)\mathscr{V}^{(m)} - b)(s,X^{i,n}_s,\mu^{N,n}_s)-((\partial_s+\mathcal{L}_s + \mathscr{L}_s)\mathscr{V}^{(m)}-b)(s,\bar{X}^i_s,\mathcal{L}(X_s^{\xi}))\Bigg]\d s\\
    &-\int_0^t (b(k_n(s),X^{i,n}_{k_n(s)},\mu^{N,n}_{k_n(s)})-b(s,X^{i,n}_s,\mu^{N,n}_s)) \, \d s-\int_0^t(\sigma(k_n(s),X^{i,n}_{k_n(s)},\mu^{N,n}_{k_n(s)})-\sigma(s,X^{i,n}_s,\mu^{N,n}_s)) \, \d W^{(i)}_s\\
&+\int_0^t (b(k_n(s),X^{i,n}_{k_n(s)},\mu^{N,n}_{k_n(s)})-b(s,X^{i,n}_s,\mu^{N,n}_s))\cdot \partial_x \mathscr{V}^{(m)}(s,X^{i,n}_s,\mu^{N,n}_s) \, \d s\\
&+\int_0^t \partial_x \mathscr{V}^{(m)}(s,X^{i,n}_s,\mu^{N,n}_s) (\sigma(k_n(s),X^{i,n}_{k_n(s)},\mu^{N,n}_{k_n(s)})-\sigma(s,X^{i,n}_s,\mu^{N,n}_s)) \, \d W^{(i)}_s\\
&+\frac{1}{N}\sum_{l=1}^N\int_0^t (b(k_n(s), X^{i,n}_{k_n(s)},\mu^{N,n}_{k_n(s)})-b(s, X^{i,n}_s,\mu^{N,n}_s))\cdot \partial_\mu \mathscr{V}^{(m)}(s,X^{i,n}_s,\mu^{N,n}_s)(X^{l,n}_s) \, \d s\\
&+\frac{1}{N}\sum_{l=1}^N\int_0^t \text{trace}\big([a(k_n(s),X^{i,n}_{k_n(s)},\mu^{N,n}_{k_n(s)})-a(s,X^{i,n}_s,\mu^{N,n}_s)]\partial_\upsilon\partial_\mu \mathscr{V}^{(m)}(s,X^{i,n}_s,\mu^{N,n}_s)(X^{l,n}_s)\big) \, \d s\\
&+\frac{1}{N}\sum_{l=1}^N\int_0^t \partial_\mu \mathscr{V}^{(m)}(s,X^{i,n}_s,\mu^{N,n}_s)(X^{l,n}_s)\sigma(k_n(s),X^{i,n}_{k_n(s)},\mu^{N,n}_{k_n(s)}) \, \d W^{(l)}_s\\
&+\frac{1}{2N^2}\sum_{l=1}^N\int_0^t \text{trace}(a(k_n(s),X^{i,n}_{k_n(s)},\mu^{N,n}_{k_n(s)})\partial^2_\mu \mathscr{V}^{(m)}(s,X^{i,n}_s,\mu^{N,n}_s)(X^{l,n}_s,X^{l,n}_s)) \, \d s.
    \end{aligned}
\end{equation}
\normalsize

 \noindent \emph{Step 2:} \\
 
We now establish appropriate upper-bounds for each term appearing in the right-hand side of the above identity.

It follows from Lemma \ref{EstimUm}(ii) that the map $\mu \mapsto \mathscr{V}^{(m)}(t, x, \mu)$ (resp. $x\mapsto \mathscr{V}^{(m)}(t, x, \mu)$) is Lipschitz continuous with respect to the product distance $\mathcal{W}_1$ distance (resp. with respect to the Euclidean distance) with Lipschitz constant bounded by $K (T-t)^{\frac{1+\eta}{2}}$ uniformly in $m$ and with respect to the variables $t$ and $x$ (resp. $t$ and $\mu$). Hence, there exists a constant $C_T=C(T, (\textbf{HR}), (\textbf{HE}))>0$ with $\lim_{T\downarrow 0} C_T=0$ such that 
$$
\E[\lvert \mathscr{V}^{(m)}(0,\xi^1,\mu^N_0)-\mathscr{V}^{(m)}(0,\xi^1,\mu_0)\rvert^2]\leq C_T\E[\mathcal{W}_1(\mu^N_0,\mu_0)^2],
$$ 

\noindent and 
$$
\E[\lvert \mathscr{V}^{(m)}(t,\bar{X}^i_t, \mathcal{L}(X_t^{\xi}))-\mathscr{V}^{(m)}(t,X^{i,n}_t,\mu^{N,n}_t)\rvert^2]\leq C_T(\E[\lvert\bar{X}^i_t-X^{i,n}_t\rvert^2]+\E[\mathcal{W}_1(\mathcal{L}(X_t^{\xi}),\mu^{N,n}_t)^2]).
$$

Similarly, it follows from the Burkholder-Davis-Gundy inequality, Lemma \ref{EstimUm}(ii) and the uniform Lipschitz regularity of $(x,\mu)\mapsto\sigma(t,x,\mu)$ that
\begin{align*}
\mathbb{E}\Big[ \Big(\int_0^t \big[&\big((\partial_x \mathscr{V}^{(m)}  -\mathbbm{1})\sigma\big)(s,{X}^{i,n}_s,\mu^{N,n}_s)  -\big((\partial_x \mathscr{V}^{(m)}-\mathbbm{1})\sigma\big)(s,\bar{X}^i_s,\mathcal{L}(X_s^{\xi}))\big]\d W^{(i)}_s \Big)^2\Big] \\
& \leq C \int_0^t \E\Big[\Big(\big((\partial_x \mathscr{V}^{(m)} -\mathbbm{1})\sigma\big)(s,{X}^{i,n}_s,\mu^{N,n}_s)-\big((\partial_x \mathscr{V}^{(m)}-\mathbbm{1})\sigma\big)(s,\bar{X}^i_s,\mathcal{L}(X_s^{\xi}))\Big)^2\Big] \, \d s \\
& \leq C\Big(\int_0^t \E[\lvert X^{i,n}_s-\bar{X}^i_s\rvert^2] \, \d s + \int_0^t \E[\mathcal{W}_1(\mu^{N,n}_s,\mathcal{L}(X_s^{\xi}))^2] \, \d s \Big).
\end{align*}

The fifth and sixth terms are handled using the H\"older regularity of the maps $(t, x, m) \mapsto b(t, x, m)$ and $t\mapsto \sigma(t, x, m)$ (uniformly in $x$ and $m$) as well as the uniform Lipschitz regularity of $(x, m)\mapsto \sigma(t,x,m)$. Hence, for some constant $C<\infty$, we get
\begin{align*}
\E[\lvert & b(k_n(s),X^{i,n}_{k_n(s)},\mu^{N,n}_{k_n(s)})-b(s,X^{i,n}_s,\mu^{N,n}_s)\rvert^2 ] +\E[\lvert \sigma(k_n(s),X^{i,n}_{k_n(s)},\mu^{N,n}_{k_n(s)})-\sigma(s,X^{i,n}_s,\mu^{N,n}_s)\rvert^2 ] \\
& \quad \leq C\big((s-k_n(s))^\eta+\E[\lvert X^{i,n}_{k_n(s)}-X^{i,n}_{s}\rvert^{2\eta}]+\E[\mathcal{W}_2(\mu^{N,n}_{k_n(s)},\mu^{N,n}_{s})^{2\eta}]\big),
\end{align*} 
\noindent so that
\begin{align*}
\E\Big[\Big| \int_0^t & (b(k_n(s),X^{i,n}_{k_n(s)},\mu^{N,n}_{k_n(s)})-b(s,X^{i,n}_s,\mu^{N,n}_s)) \, \d s \Big|^2 \Big] \\
& \quad +\E\Big[\Big| \int_0^t (\sigma(k_n(s),X^{i,n}_{k_n(s)},\mu^{N,n}_{k_n(s)})-\sigma(s,X^{i,n}_s,\mu^{N,n}_s)) \, \d W^{(i)}_s \Big|^2 \Big] \\
& \quad \quad \leq C \int_0^t \Big((s-k_n(s))^\eta+\E[\lvert X^{i,n}_{k_n(s)}-X^{i,n}_{s}\rvert^{2\eta}]+\E[\mathcal{W}_2(\mu^{N,n}_{k_n(s)},\mu^{N,n}_{s})^{2\eta}]\Big) \, \d s\\
& \quad \quad \leq C \int_0^t \Big((s-k_n(s))^\eta+\E[\lvert X^{i,n}_{k_n(s)}-X^{i,n}_{s}\rvert^{2}]^{\eta}\Big) \, \d s\\
& \quad \quad\leq C h^{\eta}. 
\end{align*}

The seventh, eighth, ninth and tenth terms are handled similarly using again Lemma \ref{EstimUm}(ii). Hence, 
\begin{align*}
& \E\Big[\Big|  \int_0^t  (b(k_n(s),X^{i,n}_{k_n(s)},\mu^{N,n}_{k_n(s)})-b(s,X^{i,n}_s,\mu^{N,n}_s))\cdot \partial_x \mathscr{V}^{(m)}(s,X^{i,n}_s,\mu^{N,n}_s) \, \d s \Big|^2 \Big] \\
&  +\E\Big[\Big| \int_0^t \partial_x \mathscr{V}^{(m)}(s,X^{i,n}_s,\mu^{N,n}_s)(\sigma(k_n(s),X^{i,n}_{k_n(s)},\mu^{N,n}_{k_n(s)})-\sigma(s,X^{i,n}_s,\mu^{N,n}_s)) \, \d W^{(i)}_s \Big|^2 \Big] \\
&  + \E\Big[\Big| \frac{1}{N}\sum_{l=1}^N\int_0^t (b(k_n(s), X^{i,n}_{k_n(s)},\mu^{N,n}_{k_n(s)})-b(s, X^{i,n}_s,\mu^{N,n}_s))\cdot \partial_\mu \mathscr{V}^{(m)}(s,X^{i,n}_s,\mu^{N,n}_s)(X^{l,n}_s) \, \d s \Big|^2 \Big]\\
&  + \E\Big[ \Big| \frac{1}{N}\sum_{l=1}^N\int_0^t \text{trace}\big([a(k_n(s),X^{i,n}_{k_n(s)},\mu^{N,n}_{k_n(s)})-a(s,X^{i,n}_s,\mu^{N,n}_s)]\\
& \quad \quad \partial_\upsilon\partial_\mu \mathscr{V}^{(m)}(s,X^{i,n}_s,\mu^{N,n}_s)(X^{l,n}_s)\big) \, \d s \Big|^2 \Big]\\
& \quad \quad \quad \leq C h^{\eta}. 
\end{align*}

We bound the eleventh term in the right-hand side of \eqref{DiffTwoProc} using the Burkholder-Davis-Gundy inequality, the boundedness of the diffusion matrix and again Lemma \ref{EstimUm}(ii). Hence, there exists a constant $C<\infty$ such that for all
\begin{align*}
\E\Big[\Big(\sup_{t\leq T}  \frac{1}{N}\sum_{l=1}^N & \int_0^t \partial_\mu \mathscr{V}^{(m)}(s,X^{i,n}_s,\mu^{N,n}_s)(X^{l,n}_s)\sigma(k_n(s),X^{i,n}_{k_n(s)},\mu^{N,n}_{k_n(s)})\d W^{(l)}_s \Big)^2\Big]\\
& \leq \frac{C}{N^2}\sum_{l=1}^N\int_0^T \E[\lvert\partial_\mu \mathscr{V}^{(m)}(s,X^{i,n}_s,\mu^{N,n}_s)(X^{l,n}_s)\sigma(k_n(s),X^{i,n}_{k_n(s)},\mu^{N,n}_{k_n(s)})\rvert^2] \, \d s\\
& \leq \frac{C}{N}.
\end{align*}

The last term in the right-hand side of \eqref{DiffTwoProc} is handled using the boundedness of the diffusion matrix $a=\sigma\sigma^{t}$ and Lemma \ref{EstimUm}(ii). Hence, there exists a constant $C<\infty$ such that 
$$
\sup_{0\leq t\leq T}\Big| \frac{1}{2N^2}\sum_{l=1}^N\int_0^t \text{trace}(a(k_n(s),X^{i,n}_{k_n(s)},\mu^{N,n}_{k_n(s)})\partial^2_\mu \mathscr{V}^{(m)}(s,X^{i,n}_s,\mu^{N,n}_s)(X^{l,n}_s,X^{l,n}_s)) \, \d s \Big|\leq \frac{C}{N}.
$$

Gathering the previous terms and using Gr\"onwall's inequality, we obtain
\begin{equation}\label{ineq:square:before:limit:m}
\begin{aligned}
\mathbb{E}[|\bar{X}^i_t - X^{i, n}_t|^2]& \leq C_T( \E[\mathcal{W}_1(\mu^N_0,\mu_0)^2] + \E[\lvert\bar{X}^i_t-X^{i,n}_t\rvert^2] + \E[\mathcal{W}_1(\mathcal{L}(X_t^{\xi}),\mu^{N,n}_t)^2] \\
& + \int_0^t \E[\mathcal{W}_1(\mathcal{L}(X_s^{\xi}),\mu^{N,n}_s)^2] \, \d s ) + C (h^\eta + \frac1N)\\
& \quad + C \E\Big[\Big(\int_0^T \lvert ((\partial_s+\mathcal{L}_s + \mathscr{L}_s)\mathscr{V}^{(m)}-b)(s,X^{i,n}_s,\mu^{N,n}_s)\rvert \d s\Big)^2\Big]\\
& \quad + C \E\Big[\Big(\int_0^T \lvert ((\partial_s+\mathcal{L}_s+ \mathscr{L}_s)\mathscr{V}^{(m)}-b)(s,\bar{X}^i_s,\mathcal{L}(X_s^{\xi}))\rvert \d s \Big)^2\Big],
\end{aligned}
\end{equation}

\noindent for some constants $C,\, C_T< \infty$ such that $\lim_{T\downarrow 0} C_T=0$. 

Our aim now is to pass to the limit as $m\uparrow \infty$ in the previous inequality and show that the last two terms, denoted respectively by ${\rm I}^{m}$ and ${\rm J}^m$, vanish.

We deal with ${\rm I}^m$ by using the following decomposition
\begin{align*}
{\rm I}^m & = \E\Bigg[\Big(\int_0^T |((\partial_s+\mathcal{L}_{s}+\mathscr{L}_s) \mathscr{V}^{(m)} -b)(s, X_s^{i, n}, \mu^{N,n}_{s})| \textbf{1}_{|\mathbf{X}_{s}^{N,n}|\leq R} \, \d s \Big)^2\Bigg] \\
& \quad + \E\Bigg[ \Big( \int_0^T |( (\partial_s+\mathcal{L}_{s} + \mathscr{L}_s)\mathscr{V}^{(m)}-b)(s, X^{i, n}_s, \mu^{N,n}_{s})| \textbf{1}_{|\mathbf{X}_{s_1}^{N,n}| >  R}\, \d s \Big)^2 \Bigg] \\
& =: {\rm I}^{m, R}_{1} + {\rm I}^{m, R}_{2}, 
\end{align*}

\noindent where $R>0$ is a fixed parameter, recalling that $\mathbf{X}^{N, n}_t = (X^{1, n}_t, \cdots, X^{N, n}_t)$. 

According to Lemma \ref{EstimUm}(iv), for any $R>0$, $(\partial_s + \mathcal{L}_s + \mathscr{L}_{s})  \mathscr{V}^{(m)}(s,X^{i, n}_s, \mu^{N,n}_{s}) \textbf{1}_{|\mathbf{X}^{N, n}_{s}|\leq R }$ converges $a.s.$ to $(\partial_s + \mathcal{L}_s + \mathscr{L}_{s})  \mathscr{V}(s, X_s^{i, n}, \mu^{N,n}_{s}) \textbf{1}_{|\mathbf{X}^{N, n}_{s}|\leq R } = b(s, X_s^{i, n},\mu^{N,n}_{s}) \textbf{1}_{|\mathbf{X}^{N, n}_{s}|\leq R }$, along a subsequence, which in turn using the boundedness of the coefficients $b_i$ and $a_{i,j}$, the estimates of Lemma \ref{EstimUm}(ii) and the dominated convergence theorem yield
\begin{align*}
\lim_{m\uparrow \infty} {\rm I}^{m, R}_{1} & = 0.
\end{align*}

 From Lemma \ref{EstimUm}(ii) and the boundedness of $b$, we get
\begin{align*}
   \sup_{m\geq 1}{\rm I}_2^{m, R} \leq C \mathbb{P}(\sup_{0\leq s\leq T}|\mathbf{X}^{N, n}_{s}|> R)
\end{align*}

\noindent which clearly yields 
\begin{equation}\label{esti:second:term:part_temp}
\lim\sup_{R \uparrow \infty} \sup_{m\geq 1}{\rm I}_2^{m, R} = 0.
\end{equation}

Hence, coming back to the decomposition of ${\rm I}^m$ and passing to the limit as $m \uparrow \infty$ and then as $R \uparrow \infty$, we conclude that $\lim_m {\rm I}^m = 0$. Omiting some technical details, we similarly prove that $\lim_m {\rm J}^m = 0$.

We now pass to the limit as $m\uparrow \infty$ in \eqref{ineq:square:before:limit:m} and then sum over $i=1, \cdots, N$. Hence, 
\begin{equation}\label{ineq:square:after:limit:m}
\begin{aligned}
\mathbb{E}\Big[\frac{1}{N}\sum_{i=1}^{N}|\bar{X}^i_t - X^{i, n}_t|^2 \Big]& \leq C_T\Big( \E[\mathcal{W}_1(\mu^N_0,\mu_0)^2] + \E\Big[\frac{1}{N}\sum_{i=1}^{N}\lvert\bar{X}^i_t-X^{i,n}_t\rvert^2\Big] \\
& \quad +\E[\mathcal{W}_1(\mathcal{L}(X_t^{\xi}),\mu^{N,n}_t)^2] \Big)\\
& \quad + C\int_0^t \E[\mathcal{W}_1(\mathcal{L}(X_s^{\xi}),\mu^{N,n}_s)^2] \, \d s + C (h^\eta + \frac1N).
\end{aligned}
\end{equation}

We now introduce $\bar{\mu}^N_t=\frac{1}{N}\sum_{i=1}^N\delta_{\bar{X}^i_t}$ the empirical measure of the i.i.d random variables $(\bar{X}^i_t)_{1\leq i\leq N}$ and use the triangle inequality to deduce that
\begin{equation}\label{EstimDistW2mutmuNnt}
\begin{aligned}
    \mathcal{W}_1(\mathcal{L}(X_t^{\xi}),\mu^{N,n}_t)^2& \leq 2\mathcal{W}_1(\mathcal{L}(X_t^{\xi}),\bar{\mu}^N_t)^2+2\mathcal{W}_1(\bar{\mu}^N_t,\mu^{N,n}_t)^2\\
    & \leq 2\mathcal{W}_1(\mathcal{L}(X_t^{\xi}),\bar{\mu}^N_t)^2+\frac{2}{N}\sum_{i=1}^N\lvert \bar{X}^i_t-X^{i,n}_t\rvert^2.
\end{aligned}
\end{equation}

We plug the previous inequality into \eqref{ineq:square:after:limit:m}, use again Gr\"onwall's inequality and choose $T$ small enough such that $C_T\leq\frac{1}{6}$. We obtain
\begin{align*}
   \E\Big[\frac{1}{N}\sum_{i=1}^N\lvert\bar{X}^i_t-X^{i,n}_t\rvert^2 \Big]& \leq C_T \Big( \E[\mathcal{W}_1(\mu^N_0,\mu_0)^2]+\sup_{0\leq t\leq T}\E[\mathcal{W}_1(\mathcal{L}(X_t^{\xi}),\bar{\mu}^{N}_t)^2] \Big)\\
   & \quad +C (h^{\eta} + \frac{1}{N}).
\end{align*}

Finally, the strong well-posedness of the SDE \eqref{SDEbarXi} as well as the exchangeability of $(\xi^i,W^{(i)})$ imply that $(\bar{X}^i,X^{i,n})_{1\leq i\leq N}$ are exchangeable in law so that
$$
\frac{1}{N}\E\Big[\sum_{i=1}^N\lvert\bar{X}^i_t-X^{i,n}_t\rvert^2\Big]=\E[\lvert\bar{X}^1_t-X^{1,n}_t\rvert^2],
$$

\noindent which in turn implies
\begin{align*}
    \max_{1\leq i\leq N}\sup_{0\leq t\leq T}& \E[\lvert\bar{X}^i_t-X^{i,n}_t\rvert^2] \\ 
        &\leq C_T \Big( \E[\mathcal{W}_1(\mu^N_0,\mu_0)^2]+\sup_{0\leq t\leq T}\E[\mathcal{W}_1(\mathcal{L}(X_t^{\xi}),\bar{\mu}^{N}_t)^2] \Big)+C(h^{\eta} + \frac{1}{N}).
\end{align*}

Now, it follows from the standard inequality $\mathcal{W}_1 \leq \mathcal{W}_2$ and Theorem 1 in \cite{fournier2013rate} that 
\begin{equation}\label{fournier:guillin:consequence}
\E[\mathcal{W}_1(\mu^N_0,\mu_0)^2]+\sup_{0\leq t\leq T}\E[\mathcal{W}_1(\mathcal{L}(X_t^{\xi}),\bar{\mu}^{N}_t)^2]\leq C\varepsilon_N,
\end{equation}

\noindent for some constant $C$ that depends only upon $d$, $q$ and $M_q(\mu)$ and where $\varepsilon_N$ is defined in \eqref{def:epsilonN}. Hence, we conclude that there exists a constant $C=C((\textbf{HR}), (\textbf{HE}), T, d, q, M_q(\mu))>0$  such that 
\begin{equation*}
    \max_{1\leq i\leq N}\sup_{0\leq t\leq T}\E[\lvert\bar{X}^1_t-X^{1,n}_t\rvert^2]\leq C(\varepsilon_N+h^\eta).
\end{equation*}

It follows from the triangle inequality and the previous inequality that 
\begin{align*}
    \sup_{0\leq t \leq T} \mathbb{E}[\mathcal{W}_2(\mathcal{L}(X_t^{\xi}), \mu_t^{N, n})^2] &\leq 2 (\mathbb{E}[\mathcal{W}_2(\mathcal{L}(X_t^{\xi}), \bar{\mu}_t^{N})^2] + \max_{1\leq i \leq N}\sup_{0\leq t \leq T}\mathbb{E}[|\bar{X}^i_t-X^{i, n}_t|^2])\\
        & \leq C(\varepsilon_N+h^\eta).    
\end{align*}

The first inequality \eqref{strong:error:sup:outisde} of Theorem \ref{Thm:Conv:EulerMaru} is eventually obtained by combining the two previous inequalities for $T$ small enough.

One may then extend the above estimate to an arbitrary finite time horizon $T$ by considering a partition of the time interval $[0,T]$ with a sufficiently small time mesh and repeating the above argument, observing that the estimates of Lemma \ref{lemma:estimate:deriv:um} are uniform in $x$, $\mu$, $\upsilon$ and $\upsilon'$. \\

\noindent \emph{Step 3:} \\

In order to prove the second inequality \eqref{strong:error:sup:inside} of Theorem \ref{Thm:Conv:EulerMaru}, we come back to \eqref{DiffTwoProc} and follow similar lines of reasonings but taking first the square of the norm, then the supremum in $t\in [0,T]$. Hence, we obtain, thanks to the above estimate, for $T$ small enough,
\begin{equation*}
    \begin{split}
    \max_{1\leq i\leq N}\mathbb{E}[\sup_{0\leq t\leq T}\lvert\bar{X}^1_t-X^{1,n}_t\rvert^2] &\leq C_T \Big( \mathbb{E}[\mathcal{W}_1(\mu^N_0,\mu_0)^2]+\mathbb{E}[\sup_{0\leq t\leq T}\mathcal{W}_1(\mathcal{L}(X_t^{\xi}),\bar{\mu}^{N}_t)^2] \\
    & \quad + \int_0^T \mathbb{E}[\mathcal{W}_1(\bar{\mu}^{N}_s,\mathcal{L}(X_s^{\xi}))^2] \, \d s \Big) +C \Big(h^\eta + \frac{1}{N} \Big).   
    \end{split}
\end{equation*}

The second term is handled using Lemma 1 \cite{ForBackwardSDENorCons}. We obtain
$$
\mathbb{E}[\sup_{0\leq t\leq T}\mathcal{W}_1(\mathcal{L}(X_t^{\xi}),\bar{\mu}^{N}_t)^2]\leq \mathbb{E}[\sup_{0\leq t\leq T}\mathcal{W}_2(\mathcal{L}(X_t^{\xi}),\bar{\mu}^{N}_t)^2]\leq C\sqrt{\varepsilon_N}.
$$

The first and third terms are handled as previously using \eqref{fournier:guillin:consequence}. Hence, we obtain 
$$
 \max_{1\leq i\leq N}\mathbb{E}[\sup_{0\leq t\leq T}\lvert\bar{X}^1_t-X^{1,n}_t\rvert^2]\leq C(\sqrt{\varepsilon_N}+h^\eta).
$$
 
It follows from the triangle inequality, the exchangeability in law of $(\bar{X}^i, X^{i, n})_{1\leq i \leq N}$ and the two previous inequalities that
\begin{equation}
    \begin{split}
       \mathbb{E}[\sup_{0\leq t\leq T} \mathcal{W}_2(\mathcal{L}(X_t^{\xi}),\mu^{N,n}_t)^2]&\leq 2\mathbb{E}[\sup_{0\leq t\leq T} \mathcal{W}_2(\mathcal{L}(X_t^{\xi}),\bar{\mu}^N_t)^2]+{2}\mathbb{E}[\sup_{0\leq t\leq T} \lvert \bar{X}^1_t-X^{1,n}_t\rvert^2]\\
       &\leq C(\sqrt{\varepsilon_N}+h^\eta).
    \end{split}
\end{equation}

The proof of \eqref{strong:error:sup:inside} is now complete.

\subsection{Auxiliary results for the proofs of Theorem \ref{Thm:Conv:EulerMaru:semigroup} and \ref{thm:error:upper:bound:smooth:case}}\label{AuxResMaruyama2}
The proofs of Theorems \ref{Thm:Conv:EulerMaru:semigroup} and \ref{thm:error:upper:bound:smooth:case} hinge upon the existence of a unique solution $\mathscr{U}:[0,t]\times \mathcal{P}_2(\mathbb{R}^d) \rightarrow \mathbb{R}$ to the backward Kolmogorov PDE:
\begin{equation}\label{Backward:Kolmorgorov:PDE:semigroup}
\begin{cases}
(\partial_t + \mathscr{L}_s)\mathscr{U}(s,\mu)&  = 0, \quad (s, \mu) \in [0,t) \times \mathcal{P}_2(\mathbb{R}^d), \\ 
\quad \mathscr{U}(t, \mu) & = \Phi(\mu), \quad \mu \in \mathcal{P}_2(\mathbb{R}^d),
\end{cases}
\end{equation}

\noindent where $\mathscr{L}_s$ is the operator given by \eqref{Lt}. Two different settings are here considered. In Theorem \ref{Thm:Conv:EulerMaru:semigroup}, the coefficients $b_i$, $a_{i, j}$ and the terminal condition $\Phi$ are irregular so that we will work with a regularized version $\mathscr{U}^{(m)}$ of the solution. In the smooth setting of Theorem \ref{thm:error:upper:bound:smooth:case}, the coefficients and terminal condition are smooth so that we will work directly with the unique solution $\mathscr{U}$ to the above PDE. 

\subsubsection{Irregular setting}\label{subsubsec:irregularsetting}

Under \textbf{(HE)} and \textbf{(HR)}, it follows from Theorem 3.8 \cite{CHAUDRUDERAYNAL20211} that for any $\Phi \in \mathscr{C}^{2,\alpha}(\mathcal{P}_2(\mathbb{R}^d), \mathbb{R}^d)$ there exists a unique solution $\mathscr{U} \in \mathcal{C}^{1,  2}([0,t)\times \mathcal{P}_2(\mathbb{R}^d)) \cap \mathcal{C}([0,t]\times \mathcal{P}_2(\mathbb{R}^d))$ to the above PDE which satisfies
$$
\mathscr{U}(s, \mu) = \Phi(\mathcal{L}({X}^{s, \xi}_t)), \quad (s, \mu) \in[0,t] \times \mathcal{P}_2(\mathbb{R}^d).
$$

Importantly, according to \cite{CHAUDRUDERAYNAL20211} and \cite{CHAUDRUDERAYNAL20221}, the existence of the second order Wasserstein derivative $\partial_\mu^2 \mathscr{U}(s,\mu)$ is not guaranteed under the sole assumption \textbf{(HE)} and \textbf{(HR)}. Hence, instead of working directly with $\mathscr{U}$, we will work with the regularized version $(\mathscr{U}^{(m)})_{m\geq1}$ defined by
$$
\mathscr{U}^{(m)}(s, \mu) = \Phi(\mathcal{L}({X}^{s, \xi, (m)}_t)), \quad (s, \mu) \in[0,t] \times \mathcal{P}_2(\mathbb{R}^d),
$$

\noindent recalling that $(X^{s, \xi, (m)}_t)_{s \leq t \leq T}$ is given by the unique weak solution to \eqref{Xm1}. It follows from Proposition 4.1 \cite{CHAUDRUDERAYNAL20211} that the density function $p_m(\mu, s, t, x, z)$ of the random vector $X_t^{s, x, \mu, (m)}$ exists for any $0\leq s < t \leq T$, recalling that $(X^{s, x, \mu, (m)}_t)_{s \leq t \leq T}$ is given by the unique weak solution to \eqref{Xm2}. We now recall some important Gaussian estimates on the derivatives of $p_m(\mu, s, t, z)$ directly taken from \cite{CHAUDRUDERAYNAL20221, CHAUDRUDERAYNAL20211}.

\begin{Lemma}\label{lemma:gaussian:bounds:pm} Assume that (\textbf{HE}) and (\textbf{HR}) hold. Then, for any $T>0$, any $(t, z) \in (0,T] \times \mathbb{R}^d$ and any integer $m$, the following properties hold: 

    \begin{itemize}
        \item The map $[0,t) \times \mathbb{R}^d \times \mathcal{P}_2(\mathbb{R}^d) \ni (s, x, \mu)\mapsto p_m(\mu, s, t, x, z)$ is in $\mathcal{C}^{1, 2, 2}_f([0,t)\times \mathbb{R}^d \times \mathcal{P}_2(\mathbb{R}^d))$.
        \item There exist positive constants $C:=C(T,(\textbf{HR}), (\textbf{HE}))$, $T\mapsto C(T,(\textbf{HR}), (\textbf{HE}))$ being non-decreasing, $c:=c(\lambda)$ such that for any $(s, x, \mu) \in [0,t) \times \mathbb{R}^d \times \mathcal{P}_2(\mathbb{R}^d)$ and any positive integer $m$
        \begin{align*}
            |\partial_x^{\ell} p_m(\mu, s, t, x, z)| & \leq \frac{C}{(t-s)^{\frac{\ell}{2}}} \, g(c(t-s), z-x), \quad \ell=0, 1, 2, \\
            |\partial^{\ell}_\mu p_m(\mu, s, t, x, z)|& \leq \frac{C}{(t-s)^{\frac{\ell-\eta}{2}}} \, g(c(t-s), z-x), \quad \ell=1, 2, \\
            |\partial_\upsilon \partial_\mu p_m(\mu, s, t, x, z)|& \leq \frac{C}{(t-s)^{1-\frac{\eta}{2}}} \, g(c(t-s), z-x),\\
            |\partial_x \partial_\mu p_m(\mu, s, t, x, z)|& \leq \frac{C}{(t-s)^{1-\frac{\eta}{2}}} \,  g(c(t-s), z-x),\\
            | \partial_s p_m(\mu, s, t, x, z)|& \leq \frac{C}{t-s} \, g(c(t-s), z-x).
        \end{align*}  
    \end{itemize}
\end{Lemma}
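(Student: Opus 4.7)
The plan is to carry out an induction on $m\geq 1$ combined with the McKean--Singer parametrix method, yielding a uniform-in-$m$ version of Proposition 4.1 in \cite{CHAUDRUDERAYNAL20211} and of the density estimates in \cite{CHAUDRUDERAYNAL20221}. The key structural point is that, by construction, the coefficients $(r,y)\mapsto b(r,y,\mathcal{L}(X^{s,\xi,(m-1)}_r))$ and $(r,y)\mapsto \sigma(r,y,\mathcal{L}(X^{s,\xi,(m-1)}_r))$ of the decoupled SDE \eqref{Xm2} are deterministic, uniformly bounded, $\eta/2$-H\"older in time and $\eta$-H\"older in space, while $a = \sigma\sigma\trans$ is uniformly non-degenerate by \textbf{(HE)}. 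Hence, for fixed $m$, \eqref{Xm2} is a standard non-McKean SDE to which classical parametrix theory applies.

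First, applying the parametrix expansion with the Gaussian proxy of covariance $(t-s)\,a(s,z,\mathcal{L}(X^{s,\xi,(m-1)}_s))$ yields existence of $p_m(\mu,s,t,x,z)$ together with the Gaussian bounds on $\partial_x^\ell p_m$ for $\ell=0,1,2$ and on $\partial_s p_m$. The constants here depend only on $T$, $\lambda$ and on the H\"older moduli of $b$ and $a$; in particular they are independent of $m$.

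Second, to obtain the sensitivities in $\mu$, I would induct on $m$. The base case $m=1$ is trivial since the coefficients in \eqref{Xm2} at this level involve only the frozen measure $\nu$, so $p_1$ is independent of $\mu$. For the inductive step, observe that the whole $\mu$-dependence of $p_m$ enters through the marginal laws
\[
\mathcal{L}(X^{s,\xi,(m-1)}_r)(\d z) = \int_{\mathbb{R}^d} p_{m-1}(\mu,s,r,x,z)\,\mu(\d x)\,\d z.
\]
A term-by-term differentiation of the parametrix series, using \textbf{(HR)}(ii)-(iii) and the inductive bound on $\partial_\mu p_{m-1}$, yields the existence and the claimed Gaussian bounds for $\partial_\mu p_m$, $\partial_\mu^2 p_m$, $\partial_\upsilon\partial_\mu p_m$ and $\partial_x\partial_\mu p_m$. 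Each differentiation in $\mu$ produces either an extra $(t-s)^{\eta/2}$ factor coming from the H\"older-in-measure regularity of $b$ and $a$ or one coming from the inductive estimate on $\partial_\mu p_{m-1}$, which combines smoothly with the Gaussian proxy via the space-time inequality \eqref{space:time:inequality}. The $\mathcal{C}^{1,2,2}_f$ regularity in $(s,x,\mu)$ is a byproduct of the locally uniform convergence of the parametrix series and of its term-by-term derivatives.

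The main obstacle is the uniformity in $m$ of the constants appearing in the Gaussian estimates. This is dealt with by the Volterra-type fixed-point argument already carried out in the two references above: the kernels arising in the iterated parametrix expansion carry integrable singularities compatible with the extra $(t-s)^{\eta/2}$ smoothing just described, so that their repeated convolution produces an absolutely convergent series whose sum is controlled by $T$, $\lambda$ and the regularity constants of $b$, $a$ alone --- independently of $m$. This uniformity is essential, since it is what allows the subsequent passage to the limit $m\uparrow\infty$ in Lemma \ref{EstimUm} and in the semigroup analysis leading to Theorem \ref{Thm:Conv:EulerMaru:semigroup}.
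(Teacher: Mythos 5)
The paper does not actually prove this lemma: immediately before the statement it says the estimates are ``directly taken from \cite{CHAUDRUDERAYNAL20221, CHAUDRUDERAYNAL20211}'', and no proof is supplied. Your proposal therefore takes a genuinely different route — you sketch the underlying parametrix-plus-induction argument from those references rather than citing them. As a summary of that strategy it is broadly faithful: the decoupled dynamics \eqref{Xm2} has deterministic, bounded, non-degenerate, H\"older coefficients once the $(m-1)$-st marginal flow is frozen, so the scalar estimates on $\partial_x^\ell p_m$ and $\partial_s p_m$ follow from classical parametrix theory uniformly in $m$, and the $\mu$-sensitivities propagate by induction through the identity $\mathcal{L}(X^{s,\xi,(m-1)}_r)=\int p_{m-1}(\mu,s,r,x,\cdot)\,\mu(\d x)$ combined with \textbf{(HR)}(ii)--(iii), with uniformity in $m$ coming from the Volterra/Gr\"onwall closure of the iterated kernels.

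Two points would need care if you were to expand this into a full proof. First, the claim that ``each differentiation in $\mu$ produces an extra $(t-s)^{\eta/2}$ factor'' over-predicts the bound at second order: the lemma gives $|\partial^2_\mu p_m|\lesssim (t-s)^{-1+\eta/2}\,g(c(t-s),z-x)$, i.e.\ the cumulative gain over the naive $(t-s)^{-1}$ is a single $(t-s)^{\eta/2}$, not $(t-s)^{\eta}$; the $\eta$-smoothing is picked up once along the parametrix chain, not once per $\mu$-derivative. Second, the non-decreasing-in-$T$ dependence of the constants and the $\mathcal{C}^{1,2,2}_f$ regularity (in particular, continuity of $\partial^2_\mu p_m$ at points of the support and its Fr\'echet-derivative characterization) are established in the references through a careful term-by-term analysis of the parametrix series; calling them a ``byproduct of locally uniform convergence'' compresses a non-trivial verification. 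Neither issue is fatal to the outline, but both mark places where a self-contained proof would have to do real work that the paper avoids by citation.
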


The following estimates will be useful for the proof of our second main result.
\begin{Lemma}\label{lemma:estimate:deriv:um}
  Assume that (\textbf{HE}) and (\textbf{HR}) hold. Then, there exists a positive constant $C = C(T, (\textbf{HR})), (\textbf{HE})) < \infty$, $T\mapsto C(T, (\textbf{HR}), (\textbf{HE}))$ being non-decreasing, such that for all positive integer $m$, for all $0\leq s < t \leq T$ and all $(\mu, \upsilon, \upsilon')\in \mathcal{P}_2(\mathbb{R}^d)\times(\R^d)^2$, it holds
$$
\lvert\partial_\mu \mathscr{U}^{(m)}(s,\mu)(\upsilon)\rvert\leq C(t-s)^{\frac{-1+\alpha}{2}} (1+|\upsilon|+M_2(\mu))),
$$
$$
\lvert\partial_\upsilon\partial_\mu \mathscr{U}^{(m)}(s,\mu)(\upsilon)\rvert + \lvert\partial^2_\mu \mathscr{U}^{(m)}(s,\mu)(\upsilon, \upsilon')\rvert + \lvert\partial_t \mathscr{U}^{(m)}(s,\mu)\rvert \leq C(t-s)^{-1+\frac{\alpha}{2}} (1+|\upsilon|+|\upsilon'| + M_2(\mu)). 
$$
\end{Lemma}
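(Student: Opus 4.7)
The strategy is to express $\mathscr{U}^{(m)}$ in terms of the density $p_m$ and to exploit the chain rule for linear functional derivatives, thereby reducing the problem to estimating various space and measure derivatives of $p_m$ (for which the Gaussian bounds of Lemma \ref{lemma:gaussian:bounds:pm} are available) integrated against the linear functional derivatives $\delta \Phi/\delta m$ and $\delta^2\Phi/\delta m^2$ of $\Phi$ (for which the Hölder regularity \eqref{local:holder:reg:first:order:linear:functional:deriv:phi}--\eqref{local:holder:reg:second:order:linear:functional:deriv:phi} and the growth bound \eqref{growth:condition:deriv:phi} are available). Precisely, I would write
\begin{gather*}
\mathscr{U}^{(m)}(s,\mu) = \Phi\big(\nu_m^{s,\mu}\big), \quad \nu_m^{s,\mu}(\d z) = q_m(\mu,s,t,z)\,\d z, \quad q_m(\mu,s,t,z) = \int_{\mathbb{R}^d} p_m(\mu,s,t,x,z)\,\mu(\d x),
\end{gather*}
then compute $\delta \mathscr{U}^{(m)}/\delta m$ via the chain rule and differentiate once more in the spatial variable to obtain
\begin{gather*}
\partial_\mu \mathscr{U}^{(m)}(s,\mu)(\upsilon) = \int_{\mathbb{R}^d} \frac{\delta \Phi}{\delta m}(\nu_m^{s,\mu})(z)\Big[\partial_x p_m(\mu,s,t,\upsilon,z) + \int_{\mathbb{R}^d}\partial_\mu p_m(\mu,s,t,x,z)(\upsilon)\,\mu(\d x)\Big]\d z.
\end{gather*}

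The core observation for obtaining the sharp singularity in $(t-s)$ is the cancellation $\int \partial_x p_m(\mu,s,t,\upsilon,z)\,\d z = 0$ and $\int \partial_\mu p_m(\mu,s,t,x,z)(\upsilon)\,\d z = 0$, both inherited from $\int p_m(\mu,s,t,x,z)\,\d z=1$. I would therefore center by replacing $\delta \Phi/\delta m(\nu_m^{s,\mu})(z)$ with $\delta \Phi/\delta m(\nu_m^{s,\mu})(z) - \delta \Phi/\delta m(\nu_m^{s,\mu})(\upsilon)$ in the first term and with $\delta \Phi/\delta m(\nu_m^{s,\mu})(z) - \delta \Phi/\delta m(\nu_m^{s,\mu})(x)$ in the second. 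The Hölder assumption \eqref{local:holder:reg:first:order:linear:functional:deriv:phi} then yields the factors $|z-\upsilon|^\alpha$ and $|z-x|^\alpha$ respectively; combining with the Gaussian estimates of Lemma \ref{lemma:gaussian:bounds:pm} and the space-time inequality \eqref{space:time:inequality} produces a first contribution of order $(t-s)^{(\alpha-1)/2}$ and a second of order $(t-s)^{(\eta+\alpha-1)/2}$, both controlled by $(t-s)^{(\alpha-1)/2}$ on bounded time intervals. A standard moment estimate for the frozen flow $X^{s,\xi,(m)}$ gives $M_2(\nu_m^{s,\mu}) \leq C(1+M_2(\mu))$ uniformly in $m$, so the linear growth $1+|\upsilon|+M_2(\mu)$ in the bound is recovered (the $|\upsilon|$ contribution arises when the growth bound \eqref{growth:condition:deriv:phi} is used, after the Gaussian tail produces terms like $|\upsilon|+\sqrt{t-s}$).

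The estimates for $\partial_\upsilon \partial_\mu \mathscr{U}^{(m)}$ and $\partial^2_\mu \mathscr{U}^{(m)}$ proceed along exactly the same lines but involve higher-order pieces of $p_m$. For $\partial_\upsilon \partial_\mu \mathscr{U}^{(m)}$, differentiating the previous formula in $\upsilon$ produces terms with $\partial^2_x p_m$ (naive rate $(t-s)^{-1}$) and $\partial_x \partial_\mu p_m$ (rate $(t-s)^{-1+\eta/2}$); the identical centering trick together with Hölder regularity of $\delta\Phi/\delta m$ brings the singularity down to $(t-s)^{-1+\alpha/2}$. For $\partial^2_\mu \mathscr{U}^{(m)}$, the chain rule for the second linear functional derivative of $\Phi \circ \nu_m^{s,\mu}$ yields (i) a term with $\delta^2\Phi/\delta m^2(z,z')$ contracted against the tensor product of two first-order derivatives of $p_m$ in $(\upsilon,z)$ and $(\upsilon',z')$, and (ii) a term with $\delta \Phi/\delta m(z)$ against $\partial_x\partial_\mu p_m$ and $\partial_\mu^2 p_m$; centering $\delta^2\Phi/\delta m^2(z,z')-\delta^2\Phi/\delta m^2(z,\upsilon')$ and invoking \eqref{local:holder:reg:second:order:linear:functional:deriv:phi} converts the singular product $\partial_x p_m(\cdot,\upsilon,\cdot)\partial_x p_m(\cdot,\upsilon',\cdot)$ of rate $(t-s)^{-1}$ into $(t-s)^{-1+\alpha/2}$, while the growth factor $(1+|z|)$ in the Hölder estimate integrates against the Gaussian to give the linear spatial dependence $1+|\upsilon|+|\upsilon'|+M_2(\mu)$ in the final bound. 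Finally, $\partial_t \mathscr{U}^{(m)}$ is treated by using the backward Kolmogorov PDE satisfied by the density $p_m$ (in the forward variables $(s,x)$) to rewrite $\partial_s q_m$ as a combination of $\partial_x p_m$ and $\partial^2_x p_m$ integrated against the bounded coefficients $b$ and $a$; the same centering then yields the singularity $(t-s)^{-1+\alpha/2}$. The main technical obstacle is the bookkeeping inherent in the $\partial^2_\mu$ computation: the expansion generates multiple cross terms, each of which requires a carefully chosen centering point (either $\upsilon$, $\upsilon'$, $x$, or $x'$) so that only one Hölder factor of order $\alpha/2$ in time is expended to tame the $(t-s)^{-1}$ singularity, avoiding a second cancellation that would require regularity of $\Phi$ beyond what is assumed.
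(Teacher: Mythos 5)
Your proposal matches the paper's approach: both express the derivatives of $\mathscr{U}^{(m)}$ as centered integrals of the flat derivatives of $\Phi$ against the derivatives of $p_m$, then combine the local $\alpha$-H\"older and growth bounds \eqref{local:holder:reg:first:order:linear:functional:deriv:phi}--\eqref{growth:condition:deriv:phi} with the Gaussian estimates of Lemma \ref{lemma:gaussian:bounds:pm} and the space-time inequality \eqref{space:time:inequality}. The only cosmetic difference is that you derive the representation formulas from scratch via the chain rule and the mass-conservation cancellations, whereas the paper cites them from prior work, and for $\partial_t \mathscr{U}^{(m)}$ the paper directly invokes the Gaussian bound on $\partial_s p_m$ rather than rewriting through the Kolmogorov PDE.
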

\begin{proof}

\noindent \emph{Step 1: }  It follows from Proposition 5.1 \cite{CHAUDRUDERAYNAL20221} and Proposition 4.1 \cite{CHAUDRUDERAYNAL20211} combined with Proposition 2.3 \cite{CHAUDRUDERAYNAL20211} that $\mathscr{U}^{(m)} \in \mathcal{C}^{1, 2}_f([0,t) \times \mathcal{P}_2(\mathbb{R}^d))$ with derivatives that can be expressed as follows:
{\small
\begin{equation}\label{first:order:measure:derivative:um}
\begin{aligned}
    \partial^{i}_\mu & \mathscr{U}^{(m)}(s,\mu)(\upsilon)\\
    &=\int_{\mathbb{R}^d}  \Big( \frac{\delta \Phi}{\delta m}(\mathcal{L}(X^{(s,\xi, (m)}_t))(y) - \frac{\delta \Phi}{\delta m}(\mathcal{L}(X^{s,\xi, (m)}_t))(\upsilon)\Big) \partial_{x_i} p_m(\mu, s,t, \upsilon,y) \, \d y\\
    & + \int_{(\mathbb{R}^d)^2} \Big( \frac{\delta \Phi}{\delta m}(\mathcal{L}(X^{(s,\xi, (m)}_t))(y) -\frac{\delta \Phi}{\delta m}(\mathcal{L}(X^{s,\xi, (m)}_t))(x) \Big)\, \partial^{i}_\mu p_m(\mu,s,t,x,y)(\upsilon) \, \d y \, \mu(\d x),
\end{aligned}
\end{equation}
\begin{equation}\label{cross:order:measure:derivative:um}
\begin{aligned}
    \partial_{\upsilon_j} & \partial^{i}_\mu \mathscr{U}^{(m)}(s,\mu)(\upsilon)\\
    &= \int_{\mathbb{R}^d}  \Big( \frac{\delta \Phi}{\delta m}(\mathcal{L}(X^{(s,\xi, (m)}_t))(y) - \frac{\delta \Phi}{\delta m}(\mathcal{L}(X^{s,\xi, (m)}_t))(\upsilon)\Big) \partial^2_{x_i, x_j} p_m(\mu, s, t, \upsilon, y) \, \d y \\
    & +\int_{(\mathbb{R}^d)^2} \Big(\frac{\delta \Phi}{\delta m}(\mathcal{L}(X^{s,\xi, (m)}_t))(y)-\frac{\delta \Phi}{\delta m}(\mathcal{L}(X^{(s,\xi, (m)}_t))(x)\Big)\, \partial_{\upsilon_{j}}\partial^{i}_\mu p_m(\mu, s, t,x,y)(\upsilon) \, \d y \, \mu(\d x),\\
\end{aligned}
\end{equation}

\begin{equation}\label{second:order:mes:derivative:um}
    \begin{aligned}
    &\partial^{(i,j)}_\mu \mathscr{U}^{(m)}(s,\mu)(\upsilon,\upsilon')\\
    & =\int_{(\mathbb{R}^d)^2} \Big( \frac{\delta^2 \Phi}{\delta m^2}(\mathcal{L}(X^{s,\xi, (m)}_t))(y,y') -\frac{\delta^2 \Phi}{\delta m^2}(\mathcal{L}(X^{(s,\xi, (m)}_t))(y,\upsilon') \Big) \partial_{x_i} p_m(\mu, s, t,\upsilon,y) \\
    & \quad \quad \times \partial_{x_j} p_m(\mu, s, t ,\upsilon',y') \, \, \d y \, \d y'\\
    &  + \int_{(\mathbb{R}^d)^3} \Big(\frac{\delta^2 \Phi}{\delta m^2}(\mathcal{L}(X^{s,\xi, (m)}_t))(y,y')-\frac{\delta^2 \Phi}{\delta m^2}(\mathcal{L}(X^{s,\xi, (m)}_t))(y,x)\Big) \partial_{x_i} p_m(\mu,s ,t,\upsilon,y)\\
    & \quad \quad \times \partial^{j}_\mu p_m(\mu,s , t,x,y')(\upsilon') \, \, \d y \, \d y' \, \mu(\d x)\\
    & +\int_{(\mathbb{R}^d)^4} \Big( \frac{\delta^2 \Phi}{\delta m^2}(\mathcal{L}(X^{s,\xi, (m)}_t))(y, y') - \frac{\delta^2 \Phi}{\delta m^2}(\mathcal{L}(X^{s,\xi, (m)}_t))(y, x') \Big) \, \partial^{i}_\mu p_m(\mu, s, t,x,y)(\upsilon) \\
    & \quad \quad \times \partial^{j}_\mu p_m(\mu, s,t,x',y')(\upsilon') \, \d y \, \d y' \, \mu(\d x)\, \mu(\d x')\\
     & +\int_{(\mathbb{R}^d)^3} \Big( \frac{\delta^2 \Phi}{\delta m^2}(\mathcal{L}(X^{s,\xi, (m)}_t))(y, y') - \frac{\delta^2 \Phi}{\delta m^2}(\mathcal{L}(X^{s,\xi, (m)}_t))(y, \upsilon')\Big) \, \partial^{i}_\mu p_m(\mu, s, t,x,y)(\upsilon) \\
     & \quad \quad \times \partial_{x_j} p_m(\mu, s, t,\upsilon',y') \, \d y \, \d y' \, \mu(\d x)\\
     &  +\int_{\mathbb{R}^d} \Big( \frac{\delta \Phi}{\delta m} (\mathcal{L}(X^{s,\xi, (m)}_t))(y) - \frac{\delta \Phi}{\delta m} (\mathcal{L}(X^{s,\xi, (m)}_t))(\upsilon) \Big) \, \partial^{j}_\mu\partial_{x_i} p_m(\mu, s, t, \upsilon, y)(\upsilon') \, \d y\\
    &  +\int_{\mathbb{R}^d}  \Big( \frac{\delta \Phi}{\delta m}(\mathcal{L}(X^{s,\xi, (m)}_t))(y) -  \frac{\delta \Phi}{\delta m}(\mathcal{L}(X^{s,\xi, (m)}_t))(\upsilon') \Big) \,\partial_{x_j} \partial^{i}_\mu p_m(\mu,s , t,\upsilon',y)(\upsilon) \, \d y\\
    &  +\int_{(\mathbb{R}^d)^2}  \Big( \frac{\delta \Phi}{\delta m}(\mathcal{L}(X^{s,\xi, (m)}_t))(y) - \frac{\delta \Phi}{\delta m}(\mathcal{L}(X^{s,\xi, (m)}_t))(x) \Big) \, \partial^{i, j}_\mu p_m(\mu, s,t,x,y)(\upsilon, \upsilon') \, \d y \, \mu(\d x)
\end{aligned}
\end{equation}
}
\noindent and {\small
\begin{equation}\label{time:derivative:um}
\partial_t \mathscr{U}^{(m)}(s, \mu) = \int_{\mathbb{R}^d} \Big(\frac{\delta \Phi}{\delta m}(\mathcal{L}(X^{s,\xi, (m)}_t))(y) - \frac{\delta \Phi}{\delta m}(\mathcal{L}(X^{s,\xi, (m)}_t))(x)\Big) \partial_s p_m(\mu, s, t, x,y) \, \d y \, \mu(\d x).
\end{equation}
}
\noindent \emph{Step 2: } Our aim now is to establish the bounds on the derivatives by using the local H\"older regularity of the flat derivatives of $\Phi$ combined with the uniform Gaussian bounds on the derivatives of $p_m$ recalled in Lemma \ref{lemma:gaussian:bounds:pm} and the space-time inequality \eqref{space:time:inequality}. 

In particular, we may break the first integral appearing in the right-hand side of \eqref{first:order:measure:derivative:um} into two parts ${\rm J}_1$ and ${\rm J}_2$ by dividing the domain of integration into two domains. In the first part ${\rm J}_1$, the $\d y$-integration is taken over a bounded domain $D$ that contains $\upsilon$ such that $|y-\upsilon| \geq 1$ if $y \notin D$. Using \eqref{local:holder:reg:first:order:linear:functional:deriv:phi}, that is, the $\alpha$-H\"older regularity of $ [\delta \Phi/\delta m](m)(.)$ on $D$, the Gaussian bound satisfied by $\partial_x p_m(\mu, s, t, x, y)$ of Lemma \ref{lemma:gaussian:bounds:pm} together with the space time inequality \eqref{space:time:inequality} and noting that $M_2(\mathcal{L}(X_t^{s, \xi, (m)})) \leq C(1+M_2(\mu))$, we get
$$
|{\rm J}_1| \leq C  \, (t-s)^{\frac{-1 + \alpha}{2}}  (1+ M_2(\mu)).
$$

As for ${\rm J}_2$, from \eqref{growth:condition:deriv:phi}, Lemma \ref{lemma:gaussian:bounds:pm} and again the space-time inequality \eqref{space:time:inequality}, we obtain
\begin{align*}
|{\rm J}_2| & \leq C  \,  \int_{|y-\upsilon| \geq 1} (t-s)^{-\frac{1-\eta}{2}} (1+ |y| + |\upsilon|+ M_2(\mathcal{L}(X^{s,\xi, (m)}_t))) \, g(c(t-s), y-\upsilon) \, \d y \,  \\
& \leq C  \,  (t-s)^{\frac{-1+\alpha}{2}} (1+ |\upsilon| + M_2(\mu)). 
\end{align*}

We deal with the second integral appearing in the right-hand side of \eqref{first:order:measure:derivative:um} using a similar decomposition. In particular, from \eqref{local:holder:reg:first:order:linear:functional:deriv:phi}, \eqref{growth:condition:deriv:phi}, Lemma \ref{lemma:gaussian:bounds:pm} and the space-time inequality \eqref{space:time:inequality}, we similarly get
\begin{align*}
\Big| \int_{(\mathbb{R}^d)^2} \Big( \frac{\delta \Phi}{\delta m}(\mathcal{L}(X^{s,\xi, (m)}_t))(y) & - \frac{\delta \Phi}{\delta m}(\mathcal{L}(X^{s,\xi, (m)}_t))(x) \Big)\, \partial^{i}_\mu p_m(\mu,s,t,x,y)(\upsilon) \, \d y \, \mu(\d x) \Big| \\
& \leq C (t-s)^{\frac{-1+\alpha}{2}} (1+M_2(\mu)).
\end{align*}

Gathering the three previous estimates, we obtain
\begin{equation}\label{first:deriv:mes:U}
\begin{aligned}
| \partial^{i}_\mu \mathscr{U}^{(m)}(s,\mu)(\upsilon)| 
& \leq C (t-s)^{\frac{-1+\alpha}{2}} (1 + |\upsilon| + M_2(\mu)).
\end{aligned}
\end{equation}

We use similar computations to deal with the other terms. In particular, from \eqref{cross:order:measure:derivative:um}, \eqref{second:order:mes:derivative:um}, \eqref{time:derivative:um}, \eqref{local:holder:reg:first:order:linear:functional:deriv:phi}, \eqref{growth:condition:deriv:phi},  Lemma \ref{lemma:gaussian:bounds:pm} and the space-time inequality \eqref{space:time:inequality}, we obtain
\begin{equation}\label{cross:deriv:mes:upsilon:U}
\begin{aligned}
    \lvert   \partial_{\upsilon_j} \partial^{i}_\mu \mathscr{U}^{(m)}(s,\mu)(\upsilon)\rvert & + \lvert   \partial^{(i,j)}_\mu \mathscr{U}^{(m)}(s,\mu)(\upsilon,\upsilon')\rvert +  \lvert   \partial_t  \mathscr{U}^{(m)}(s, \mu)\rvert \\
    &\leq C (t-s)^{-1+\frac{\alpha}{2}} (1 + |\upsilon|+ |\upsilon'| + M_2(\mu)).
\end{aligned}
\end{equation}

\end{proof}

\subsubsection{Regular setting}\label{subsubsec:regularsetting}

As previously mentioned, the strategy is similar to the one used in the proof of the previous results, namely, we rely on the well-posedness of the backward Kolmogorov PDE \eqref{Backward:Kolmorgorov:PDE:semigroup}. 

In the current smooth setting, we take advantage of the flow derivatives of the unique solution to the McKean-Vlasov SDE \eqref{SDE:MCKEAN} and its decoupling field to compute the derivatives of $\mathscr{U}(s,.)$. We let $(\widetilde{W}_{t})_{t\geq0}$ be a $\tilde{q}$-dimensional Brownian motion and consider the unique strong solution to the following stochastic differential equation with dynamics
\begin{equation}\label{MKV:auxiliary:for:proof}
\widetilde{X}_t^{s, \xi} = \xi + \int_s^t b(r, \widetilde{X}_r^{s, \xi}, \mathcal{L}({X}^{s, \xi}_r)) \, \d r + \int_s^t \tilde{\sigma}(r, \widetilde{X}_r^{s, \xi}, \mathcal{L}({X}^{s, \xi}_r)) \, \d \widetilde{W}_r, \quad s\leq t \leq T.
\end{equation}

 We also denote by $\widetilde{X}_t^{s, x, \mu}$ the unique solution to the decoupled SDE associated to \eqref{MKV:auxiliary:for:proof} starting from $x$ at time $s$. We now introduce the main object of interest, that is, the map 
 $$
 \widetilde{\mathscr{U}}(s, \mu) = \Phi(\mathcal{L}(\widetilde{X}^{s, \xi}_t)), \quad (s, \mu) \in [0,T] \times \mathcal{P}_2(\mathbb{R}^d).
 $$

 It follows from the well-posedness of the non-linear martingale problem associated to \eqref{MKV:auxiliary:for:proof} (which only depends on $\tilde{\sigma}$ through the diffusion matrix $a=\sigma \sigma^{\trans} = \tilde{\sigma} \tilde{\sigma}^{\trans}$) that $\mathcal{L}(\widetilde{X}^{s, \xi}_t) = \mathcal{L}({X}^{s, \xi}_t)$. Hence, the map $\mathscr{U}(s, \mu) = \Phi(\mathcal{L}({X}^{s, \xi}_t))$ satisfies for any $s \leq t \leq T$ and any $\mu \in \mathcal{P}_2(\mathbb{R}^d)$,
 $$
 \mathscr{U}(s, \mu) = \widetilde{\mathscr{U}}(s, \mu) = \Phi(\mathcal{L}(\widetilde{X}^{s, \xi}_t)). 
 $$


The next result, which is similar in nature to Lemmas 6.1 and 6.2 as well as Theorem 7.2 in \cite{buckdahn2017}, characterizes the regularity of $\mathscr{U}$ and proves that it is the unique solution to the backward Kolmogorov PDE \eqref{Backward:Kolmorgorov:PDE:semigroup}. We provide its proof for sake of completeness.




\begin{lemma}\label{existence:and:reg:sol:kolmogorov:smooth:setting}
Under the framework of Theorem \ref{thm:error:upper:bound:smooth:case}, the following conclusions hold:

\begin{itemize}
    \item [(i)] For any $1\leq i, j \leq d$, the derivatives $\partial^{i}_\mu \mathscr{U}(s,\mu)(\upsilon)$, $\partial_{\upsilon_j} \partial^{i}_\mu \mathscr{U}(s,\mu)(\upsilon)$ exist, are continuous on $[0,t]\times \mathcal{P}_2(\mathbb{R}^d) \times \mathbb{R}^d$, bounded and Lipschitz continuous with respect to $\mu$ and $\upsilon$ uniformly in $s\in [0,t]$.  
    \item[(ii)] For any $1\leq i, j \leq d$, the derivative $\partial^{(i, j)}_{\mu} \mathscr{U}(s, \mu)(\upsilon, \upsilon')$ is continuous on $[0,t]\times \mathcal{P}_2(\mathbb{R}^d) \times (\mathbb{R}^d)^2$ and bounded.
    \item[(iii)] The map $\mathscr{U}$ is the unique solution to the PDE \eqref{Backward:Kolmorgorov:PDE:semigroup} in the class of maps that belong to $\mathcal{C}^{1,2}([0,t]\times \mathcal{P}_2(\mathbb{R}^d))$ with bounded derivatives.  
\end{itemize}
\end{lemma}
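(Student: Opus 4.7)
The plan is to propagate the smoothness assumed of $\Phi$ through the flow $\mu \mapsto \mathcal{L}(\widetilde X_t^{s,\xi})$ using (\textbf{HFR}), and then derive the PDE by an Itô-type formula on $\mathcal{P}_2$ along the McKean--Vlasov flow. Throughout, the identity $\mathscr{U}(s,\mu) = \widetilde{\mathscr{U}}(s,\mu)$ coupled with $a = \widetilde\sigma\,\widetilde\sigma^{\trans}$ allows us to work with the genuinely strong SDE \eqref{MKV:auxiliary:for:proof} rather than with \eqref{SDE:MCKEAN}, which is essential for what follows.

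\textbf{Step 1 (linearization of the decoupled flow).} I would first establish, by SDE linearization in the spirit of Buckdahn--Li--Peng--Rainer \cite{buckdahn2017} (Lemmas 4.1 and 6.1 therein), that under (\textbf{HFR}) the decoupled flow $(x,\mu) \mapsto \widetilde X_t^{s,x,\mu}$ admits $L^2$-bounded first derivatives $\partial_{x_j}\widetilde X_t^{s,x,\mu}$, $\partial_\mu^j \widetilde X_t^{s,x,\mu}(\upsilon)$ and second derivatives $\partial_{x_k x_j}^2 \widetilde X_t^{s,x,\mu}$, $\partial_{x_k}\partial_\mu^j \widetilde X_t^{s,x,\mu}(\upsilon)$, $\partial_{\upsilon_k}\partial_\mu^j \widetilde X_t^{s,x,\mu}(\upsilon)$, $\partial_\mu^{(i,j)}\widetilde X_t^{s,x,\mu}(\upsilon,\upsilon')$. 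Each of these is identified with the solution of a linear SDE obtained by formally differentiating \eqref{MKV:auxiliary:for:proof}, whose coefficients involve the derivatives of $b$ and $\widetilde\sigma$ that are bounded under (\textbf{HFR})(ii); Grönwall arguments then yield uniform $L^2$-bounds and Lipschitz regularity in $(s,x,\mu,\upsilon,\upsilon')$ on $[0,t]$.

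\textbf{Step 2 (chain rule for $\mathscr{U}$).} Combining Step 1 with the chain rule on $\mathcal{P}_2(\mathbb{R}^d)$ (Sections~5.3--5.4 of \cite{cardel19}), one obtains explicit representations of $\partial_\mu^i\mathscr{U}(s,\mu)(\upsilon)$, $\partial_{\upsilon_j}\partial_\mu^i\mathscr{U}(s,\mu)(\upsilon)$ and $\partial_\mu^{(i,j)}\mathscr{U}(s,\mu)(\upsilon,\upsilon')$ as expectations of products involving the first- and second-order derivatives of $\Phi$, evaluated along the flow, multiplied by the corresponding first- and second-order derivatives of $\widetilde X_t^{s,\cdot,\cdot}$ from Step 1. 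Boundedness of $\partial_\mu\Phi$, $\partial_\upsilon\partial_\mu\Phi$ and $\partial_\mu^2\Phi$, together with the $L^2$-bounds from Step 1, produces the boundedness claims in (i)--(ii); the Lipschitz continuity of $\partial_\mu\Phi$ and $\partial_\upsilon\partial_\mu\Phi$ combined with the Lipschitz regularity of the linearized flow in $(\mu,\upsilon)$ yields the Lipschitz estimates in (i); joint continuity in all arguments follows from standard $L^2$-stability of SDEs.

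\textbf{Step 3 (PDE and uniqueness).} For (iii), fix $(s,\mu) \in [0,t)\times \mathcal{P}_2(\mathbb{R}^d)$, take $\xi \sim \mu$, and consider the McKean--Vlasov flow $r \mapsto \mathcal{L}(X_r^{s,\xi})$ on $[s,t]$. By weak uniqueness and the flow property, $\mathscr{U}(r,\mathcal{L}(X_r^{s,\xi})) = \Phi(\mathcal{L}(X_t^{s,\xi}))$ is constant in $r$. Applying the Itô--Wentzell formula on $\mathcal{P}_2$ (e.g.\ Proposition~5.102 of \cite{cardel19}, justified by the regularity obtained in Steps~1--2) and evaluating its derivative at $r=s$ yields $(\partial_s + \mathscr{L}_s)\mathscr{U}(s,\mu) = 0$; continuity at $r=t$ provides the terminal condition. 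For uniqueness, any other $V \in \mathcal{C}^{1,2}([0,t]\times\mathcal{P}_2(\mathbb{R}^d))$ with bounded derivatives solving \eqref{Backward:Kolmorgorov:PDE:semigroup} satisfies, by the same Itô formula, that $r \mapsto V(r,\mathcal{L}(X_r^{s,\xi}))$ has zero derivative on $[s,t]$, so $V(s,\mu) = V(t,\mathcal{L}(X_t^{s,\xi})) = \Phi(\mathcal{L}(X_t^{s,\xi})) = \mathscr{U}(s,\mu)$.

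\textbf{Main obstacle.} The delicate part is Step~1 at the second-order Wasserstein level: establishing existence, boundedness, and joint continuity of $\partial_\mu^{(i,j)}\widetilde X_t^{s,x,\mu}(\upsilon,\upsilon')$ (which requires a careful bi-linearization of \eqref{MKV:auxiliary:for:proof} and Fréchet-differentiability at second order of the lift to $L^2(\Omega;\mathbb{R}^d)$), and verifying that its composition with $\partial_\mu^2\Phi$ produces a version of $\partial_\mu^{(i,j)}\mathscr{U}$ satisfying the symmetry and joint continuity requirements of Definition \ref{DefCp22}. Once this is secured, the chain-rule assembly in Step~2 and the Itô argument in Step~3 are essentially routine.
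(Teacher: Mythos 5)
Your proposal follows essentially the same route as the paper: differentiate the decoupled flow $\widetilde{X}_t^{s,x,\mu}$, assemble the Wasserstein derivatives of $\mathscr{U}$ by the chain rule, and derive the PDE plus uniqueness via the flow property and It\^o's formula on $\mathcal{P}_2(\mathbb{R}^d)$. The one point where you diverge is that the paper does not re-derive the flow regularity by linearization \`a la \cite{buckdahn2017}; it instead invokes Theorem 3.2 of \cite{crisan:murray} directly, which under (\textbf{HFR}) already supplies existence, boundedness and Lipschitz $L^p$-continuity of $\partial_{x_i}\widetilde X$, $\partial^2_{x_i x_j}\widetilde X$, $\partial^i_\mu\widetilde X$, $\partial^j_\mu\partial_{x_i}\widetilde X$ and $\partial^{(i,j)}_\mu\widetilde X$ — i.e.\ exactly the ``main obstacle'' you flag — so your Step 1 can be replaced by a citation and your argument then closes.
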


\begin{proof}
Under the current regularity assumption on the coefficients $b_i$ and $\sigma_{i, j}$, it follows from Theorem 3.2 \cite{crisan:murray} that the derivatives $\partial_{x_i} \widetilde{X}_t^{s, x, \mu}$, $\partial^{2}_{x_i, x_j} \widetilde{X}_t^{s, x, \mu}$, $\partial^{i}_{\mu} \widetilde{X}_t^{s, x, \mu}(\upsilon)$, $\partial_\mu^{j} \partial_{x_i} \widetilde{X}_t^{s, x, \mu}(\upsilon)$ and $\partial^{(i, j)}_{\mu} \widetilde{X}_t^{s, x, \mu}(\upsilon)$, $1\leq i, j \leq d$, exist, are bounded and Lipschitz-continuous with respect to $x$, $\mu$ and $\upsilon$, in $L^{p}(\mathbb{P})$, for any $p\geq1$. 

Consequently, the derivatives $\partial^{i}_\mu \mathscr{U}(s,\mu)(\upsilon)$ and $\partial_{\upsilon_j} \partial^{i}_\mu \mathscr{U}(s,\mu)(\upsilon)$ exist and satisfy 
\begin{equation}\label{first:order:deriv:sol:kolmogorov:pde:smooth:setting}
\begin{aligned}
\partial^{i}_\mu \mathscr{U}(s,\mu)(\upsilon) & = \mathbb{E}[\partial^k_\mu \Phi(\mathcal{L}({X}^{s, \xi}_t))(\widetilde{X}_t^{s, \upsilon, \mu}) \partial_{x_i} (\widetilde{X}_t^{s, \upsilon, \mu})^k] \\
& \quad + \mathbb{E}[\partial^k_\mu \Phi(\mathcal{L}({X}^{s, \xi}_t))(\widetilde{X}_t^{s, \xi}) \partial^i_\mu (\widetilde{X}_t^{s, \xi})^k(\upsilon)]
\end{aligned}
\end{equation}

\noindent and
\begin{align*}
\partial_{\upsilon_j}\partial^{i}_\mu \mathscr{U}(s,\mu)(\upsilon) & = \mathbb{E}[\partial_{\upsilon_{\ell}} \partial^k_\mu \Phi(\mathcal{L}({X}^{s, \xi}_t))(\widetilde{X}_t^{s, \upsilon, \mu}) \partial_{x_j} (\widetilde{X}_t^{s, \upsilon, \mu})^\ell \partial_{x_i} (\widetilde{X}_t^{s, \upsilon, \mu})^k] \\
& \quad + \mathbb{E}[\partial^k_\mu \Phi(\mathcal{L}({X}^{s, \xi}_t))(\widetilde{X}_t^{s, \upsilon, \mu}) \partial^2_{x_i, x_j} (\widetilde{X}_t^{s, \upsilon, \mu})^k]\\
& \quad \quad + \mathbb{E}[\partial^k_\mu \Phi(\mathcal{L}({X}^{s, \xi}_t))(\widetilde{X}_t^{s, \xi}) \partial_{\upsilon_{j}}\partial^i_\mu (\widetilde{X}_t^{s, \xi})^k(\upsilon)],
\end{align*}

\noindent recalling that we use the summation over repeated indices convention and where $\partial^i_\mu (\widetilde{X}_t^{s, \xi})^k(\upsilon) = \partial^i_\mu (\widetilde{X}_t^{s, x, \mu})^k(\upsilon)_{|x=\xi}$, $\partial_{\upsilon_{j}}\partial^i_\mu (\widetilde{X}_t^{s, \xi})^k(\upsilon) = \partial_{\upsilon_{j}}\partial^i_\mu (\widetilde{X}_t^{s, x, \xi})^k(\upsilon)_{|x=\xi}$.

Hence, the aforementioned derivatives are continuous on $[0,t]\times \mathcal{P}_2(\mathbb{R}^d) \times \mathbb{R}^d$, bounded and Lipschitz continuous with respect to $\mu$ and $\upsilon$ uniformly in $s\in [0,t]$. We similarly obtain that $\partial^{(i,j)}_\mu \mathscr{U}(s,\mu)(\upsilon, \upsilon')$ exists and by taking the $\partial_\mu^{j}$ derivative on both sides of \eqref{first:order:deriv:sol:kolmogorov:pde:smooth:setting}, we obtain
\begin{equation}
    \begin{aligned}
        \partial_\mu^{(i,j)} \mathscr{U}(s,\mu)(\upsilon,\upsilon')&=\E[\hat\E[\partial_\mu^{(k,\ell)}\Phi(\mathcal{L}({X}^{s, \xi}_t))(\widetilde{X}_t^{s, \upsilon, \mu}, \widetilde{X}_t^{s, \upsilon', \mu})\partial_{x_j}(\widehat{{X}}_t^{s, \upsilon', \mu})^\ell\partial_{x_i}(\widetilde{X}_t^{s, \upsilon, \mu})^k]]\\
        &+\E[\partial_{\upsilon_{\ell}}\partial_\mu^k \Phi(\mathcal{L}({X}^{s, \xi}_t))(\widetilde{X}_t^{s, \upsilon, \mu}) \partial_\mu^j(\widetilde{X}_t^{s,\upsilon,\mu})^\ell(\upsilon')\partial_{x_i}(\widetilde{X}_t^{s, \upsilon, \mu})^k]\\
        &+\E[\partial_\mu^k \Phi(\mathcal{L}({X}^{s, \xi}_t))(\widetilde{X}_t^{s, \upsilon, \mu})\partial_\mu^j\partial_{x_i}(\widetilde{X}_t^{s, \upsilon, \mu})^k]\\
&+\E[\hat\E[\partial_\mu^{(k,\ell)}\Phi(\mathcal{L}({X}^{s, \xi}_t))(\widetilde{X}_t^{s, \xi},\widehat{{X}}_t^{s, \xi})\partial_\mu^j(\widehat{X}_t^{s, \xi})^\ell(\upsilon')\partial^i_\mu (\widetilde{X}_t^{s, \xi})^k(\upsilon)]]\\
        &+\E[\partial_{\upsilon_{\ell}}\partial_\mu^k \Phi(\mathcal{L}({X}^{s, \xi}_t))(\widetilde{X}_t^{s, \xi}) \partial_\mu^j(\widetilde{X}_t^{s,\xi})^\ell(\upsilon')\partial^i_\mu (\widetilde{X}_t^{s, \xi})^k(\upsilon)]\\
        &+\E[\partial_\mu^k \Phi(\mathcal{L}({X}^{s, \xi}_t))(\widetilde{X}_t^{s, \xi})\partial_\mu^{(i,j)} (\widetilde{X}_t^{s, \xi})^k(\upsilon,\upsilon')]\\
    \end{aligned}
\end{equation}

\noindent where $(\widehat{{X}}^{s,\upsilon,\mu}_t)_{s\leq t\leq T}$(resp. $(\widehat{{X}}^{s,\xi}_t)_{s\leq t\leq T}$) is a copy of the corresponding original processes $(\widetilde{X}^{s,\upsilon,\mu}_t)_{s\leq t\leq T}$(resp. $(\widetilde{X}^{s,\xi}_t)_{s\leq t\leq T}$) defined on a copy $(\hat\Omega,\hat\Fc,\hat\P)$ of the original probability space $(\Omega,\Fc,\P)$, and $\hat \E$ is the expectation
taken on $(\hat\Omega,\hat\Fc,\hat\P)$ with respect to $\hat\P$. From the previous identity, we similarly deduce that the derivative $\partial^{(i, j)}_{\mu} \mathscr{U}(s, \mu)(\upsilon, \upsilon')$ is continuous on $[0,t]\times \mathcal{P}_2(\mathbb{R}^d) \times (\mathbb{R}^d)^2$ and bounded.

It eventually follows from the Markov property satisfied by the unique solution to the McKean-Vlasov SDE \eqref{SDE:MCKEAN} that $s\mapsto \mathscr{U}(s, \mu)$ is continuously differentiable and that $\partial_t \mathscr{U}(s, \mu) = - \mathscr{L}_s \mathscr{U}(s, \mu)$ for any $(s,\mu) \in [0,t) \times \mathcal{P}_2(\mathbb{R}^d)$. 

In order to prove uniqueness, we remark that It\^o's rule guarantees that $\Phi(\mathcal{L}({X}^{s, \xi}_t)) = U(t, \mathcal{L}({X}^{s, \xi}_t)) = U(s, \mu)$ for any solution $U \in \mathcal{C}^{1,2}([0,t]\times \mathcal{P}_2(\mathbb{R}^d))$ to the PDE \eqref{Backward:Kolmorgorov:PDE:semigroup} with bounded derivatives. Hence, $\mathscr{U}(s, \mu) = U(s, \mu)$ for any $(s,\mu)\in [0,t]\times \mathcal{P}_2(\mathbb{R}^d)$ which proves uniqueness.

\end{proof}

\subsection{Proof of Theorem \ref{Thm:Conv:EulerMaru:semigroup}}\label{proof:Thm:Conv:EulerMaru:semigroup}
The main idea is to write the following key decomposition 
 \begin{equation}\label{key:decomposition:theorem:weak:strong:error:semigroup}
 \Phi(\mu^{N,n}_t) - \Phi(\mathcal{L}({X}^{\xi}_t)) = \mathscr{U}(0, \mu^{N}_0) - \mathscr{U}(0,\mu) + \mathscr{U}(t, \mu^{N,n}_t) - \mathscr{U}(0, \mu^{N}_0).
 \end{equation}

As previously mentioned, instead of working directly with $\mathscr{U}$ to deal with the two terms in the above decomposition, we will work with the approximation sequence $(\mathscr{U}^{(m)})_{m\geq1}$ studied earlier.\\

\noindent \emph{Proof of \eqref{esti:sem:chaos}}\\

 \emph{Step 1:}
The first term of the decomposition \eqref{key:decomposition:theorem:weak:strong:error:semigroup} corresponds to the error between the empirical measure at time $0$ and the initial law $\mu$ when both acts on the unique solution to the backward Kolmogorov PDE \eqref{Backward:Kolmorgorov:PDE:semigroup}. We approximate it by $\mathscr{U}^{(m)}(0, \mu^{N}_0) - \mathscr{U}^{(m)}(0,\mu)$. Assuming that the random variables $(\xi^{i})_{1\leq i \leq N}$ are i.i.d., it follows from Theorem 2.11 \cite{chassagneux:szpruch:tse} (see equation (2.21) therein) that
\begin{equation}\label{init:error:weak:prop:chaos}
\begin{aligned}
 \E[\mathscr{U}^{(m)}(0, \mu^{N}_0)  ] & - \mathscr{U}^{(m)}(0,\mu) \\
  & = \frac{1}{N} \mathbb{E}\Big[\int_{[0,1]^2}  \int_{\mathbb{R}^d} \lambda  \frac{\delta^2 \mathscr{U}^{(m)}}{\delta m^2}(0, m_{\lambda, \lambda_1}^{N})(\tilde{\xi}_1, y) (\delta_{\tilde{\xi}_1} - \delta_{\xi_1})(\d y)  \, \d \lambda \d \lambda_1 \Big]
\end{aligned}
\end{equation}
\noindent where $\tilde{\xi}^1$ is a random variable with law $\mu$ that is independent of $(\xi^{i})_{1\leq i \leq N}$ and
\begin{equation}
\begin{aligned}
m^{N}_{\lambda, \lambda_1} & : = \mu + \lambda (\mu_0^{N}-\mu)+  \lambda \lambda_1 N^{-1} (\delta_{\tilde{\xi}^1} - \delta_{\xi^1}).
\end{aligned}
\end{equation}

 Now, Lemma 2.5 in \cite{chassagneux:szpruch:tse} together with Lemma \ref{lemma:estimate:deriv:um} guarantees that
$$
\Big|  \frac{\delta^2 \mathscr{U}^{(m)}}{\delta m^2}(0, m_{\lambda, \lambda_1}^{N})(\upsilon_1,  \upsilon_2)\Big| \leq C \frac{|\upsilon_1| |\upsilon_2|}{t^{1-\frac{\alpha}{2}}} (1+|\upsilon_1|+|\upsilon_2| + M_2(m^N_{\lambda, \lambda_1})).
$$

\noindent Hence, using the fact that $M_4(\mu)^4=\mathbb{E}[|\xi^{1}|^4]<\infty$, we get 
\begin{equation}\label{weak:error:bound:initial:condition}
\Big|\E[\mathscr{U}^{(m)}(0, \mu^{N}_0) ]  - \mathscr{U}^{(m)}(0, \mu) \Big| \leq \frac{C}{t^{1-\frac{\alpha}{2}}N}.
\end{equation}

\noindent \emph{Step 2: }
The second term of the decomposition \eqref{key:decomposition:theorem:weak:strong:error:semigroup} is approximated by $\mathscr{U}^{(m)}(t, \mu^{N,n}_t) - \mathscr{U}^{(m)}(0, \mu^{N}_0)$. In order to proceed, we apply It\^o's rule to $(\mathscr{U}^{(m)}(s,\mu^{N, n}_s))_{s \in [0,t]}$. Hence,
\begin{equation*}
\begin{aligned}
&\mathscr{U}^{(m)}(s,\mu^{N,n}_s)=\mathscr{U}^{(m)}(0,\mu^N_0)+\int_0^s(\partial_t+\mathscr{L}_{s_1})\mathscr{U}^{(m)}(s_1,\mu^{N,n}_{s_1}) \, \d s_1\\
&+\frac{1}{N}\sum_{i=1}^N\int_0^s (b(k_n(s_1),X^{i,n}_{k_n(s_1)},\mu^{N,n}_{k_n(s_1)})-b(s_1,X^{i,n}_{s_1},\mu^{N,n}_{s_1}))\cdot \partial_\mu \mathscr{U}^{(m)}(s_1,\mu^{N,n}_{s_1})(X^{i,n}_{s_1}) \, \d s_1\\
&+\frac{1}{N}\sum_{i=1}^N\int_0^s \text{trace}\big([a(k_n(s_1),X^{i,n}_{k_n(s_1)},\mu^{N,n}_{k_n(s_1)})-a(s,X^{i,n}_{s_1},\mu^{N,n}_{s_1})]\partial_\upsilon \partial_\mu \mathscr{U}^{(m)}(s_1, \mu^{N,n}_{s_1})(X^{i,n}_{s_1})\big) \, \d s_1\\
&+\frac{1}{N}\sum_{i=1}^N\int_0^s \partial_\mu \mathscr{U}^{(m)}(s_1,\mu^{N,n}_{s_1})(X^{i,n}_{s_1})\sigma(k_n(s_1),X^{i,n}_{s_1},\mu^{N,n}_{k_n(s_1)})\, \d W^{(i)}_{s_1}\\
&+\frac{1}{2N^2}\sum_{i=1}^N\int_0^s \text{trace}(a(k_n(s_1),X^{i,n}_{k_n(s_1)},\mu^{N,n}_{k_n(s_1)})\partial^2_\mu \mathscr{U}^{(m)}(s_1,\mu^{N,n}_{s_1})(X^{i,n}_{s_1},X^{i,n}_{s_1})) \, \d s_1.
\end{aligned}
\end{equation*}





Note that \eqref{first:deriv:mes:U} guarantees that the stochastic integral appearing in the right-hand side of the above identity is a martingale. Hence, taking expectation in the previous identity and then letting $s=t$, we get
\begin{equation}\label{key:decomposition:expected:Um}
    \begin{aligned}
    & \E[   \mathscr{U}^{(m)}(t,\mu^{N,n}_t)  - \mathscr{U}^{(m)}(0,\mu^N_0)] =\E\Big[\int_0^t (\partial_t+\mathscr{L}_{s_1})\mathscr{U}^{(m)}(s_1,\mu^{N,n}_{s_1}) \, \d s_1\Big]\\
    &+\frac{1}{N}\sum_{i=1}^N\E\Big[\int_0^t (b(k_n(s_1),X^{i,n}_{k_n(s_1)},\mu^{N,n}_{k_n(s_1)})-b(s_1,X^{i,n}_{s_1},\mu^{N,n}_{s_1}))\cdot \partial_\mu \mathscr{U}^{(m)}(s_1,\mu^{N,n}_{s_1})(X^{i , n}_{s_1}) \, \d s_1\Big]\\
&+\frac{1}{N}\sum_{i=1}^N\E\Big[\int_0^t \text{trace}\Big([a(k_n(s_1),X^{i,n}_{k_n(s_1)},\mu^{N,n}_{k_n(s_1)})-a(s,X^{i,n}_{s_1},\mu^{N,n}_{s_1})]\partial_\upsilon \partial_\mu \mathscr{U}^{(m)}(s_1, \mu^{N,n}_{s_1})(X^{i,n}_{s_1})\big) \, \d s_1\Big]\\
&+\frac{1}{2N^2}\sum_{i=1}^N\E\Big[\int_0^t \text{trace}(a(k_n(s_1),X^{i,n}_{k_n(s_1)},\mu^{N,n}_{k_n(s_1)})\partial^2_\mu \mathscr{U}^{(m)}(s_1,\mu^{N,n}_{s_1})(X^{i,n}_{s_1},X^{i,n}_{s_1})) \, \d s_1\Big]\\
& := {\rm I}^m_1 + {\rm I}^m_2 + {\rm I}^m_3 + {\rm I}^m_4.
    \end{aligned}
\end{equation}

\noindent \emph{Step 3:}

We here establish appropriate upper-bounds for ${\rm I}^m_2$, ${\rm I}^m_3$ and ${\rm I}^m_4$ and then prove that ${\rm I}^m_1$ vanishes as $m\uparrow \infty$. 
First, the H\"older regularity of the coefficients $b_i$ together with \eqref{first:deriv:mes:U} gives
\begin{align*}
|{\rm I}^m_2| &\leq K \int_{0}^{t} \frac{1}{(t-s_1)^{\frac{1-\alpha}{2}}} \Big((s_1-k_n(s_1))^{\frac{\eta}{2}} +  \Big(\frac{1}{N}\sum_{i=1}^N \mathbb{E}[|X^{i,n}_{s_1}-X^{i,n}_{k_n(s_1)}|^2]\Big)^{\frac{\eta}{2}}\Big) \, \d s_1\\
& \leq C t^{\frac{1+\alpha}{2}} h^{\frac{\eta}{2}}.
\end{align*}

We deal with ${\rm I}^m_3$ and ${\rm I}^m_4$ in a similar way. Namely, the H\"older regularity and boundedness of $a_{i,j}$ together with \eqref{cross:deriv:mes:upsilon:U} yields
$$
|{\rm I}^m_3| \leq C t^{\frac{\alpha}{2}} h^{\frac{\eta}{2}} \quad \mbox{ and } \quad |{\rm I}^m_4| \leq C \frac{t^{\frac{\alpha}{2}}}{N} .
$$

Gathering the three previous bounds and coming back to \eqref{key:decomposition:expected:Um}, we obtain 
\begin{equation}\label{bound:Um:before:limit}
\Big| \E[   \mathscr{U}^{(m)}(t,\mu^{N,n}_t)  - \mathscr{U}^{(m)}(0,\mu^N_0)] \Big| \leq \Big| {\rm I}^{m}_1\Big| + K t^{\frac{\alpha}{2}}(h^{\frac{\eta}{2}}+N^{-1}).
\end{equation}

We now pass to the limit as $m \uparrow \infty$ in the previous inequality. In order to deal with ${\rm I}^m_1$, we use the following decomposition
\begin{align*}
{\rm I}^m_1 & = \E\Bigg[\int_0^t (\partial_t+\mathcal{L}_{s_1}) \mathscr{U}^{(m)}(s_1,\mu^{N,n}_{s_1}) \textbf{1}_{|\mathbf{X}_{s_1}^{N,n}|\leq R}\, \d s_1\Bigg] + \E\Bigg[\int_0^t (\partial_t+\mathcal{L}_{s_1})\mathscr{U}^{(m)}(s_1,\mu^{N,n}_{s_1}) \textbf{1}_{|\mathbf{X}_{s_1}^{N,n}| >  R}\, \d s_1\Bigg] \\
& =: {\rm I}^{m, R}_{1, 1} + {\rm I}^{m, R}_{1,2}, 
\end{align*}

\noindent where $R>0$ is a fixed parameter.

According to \cite{CHAUDRUDERAYNAL20221, CHAUDRUDERAYNAL20211}, there exists a subsequence of $(s, \mu) \mapsto \partial_t\mathscr{U}^{(m)}(s, \mu)$, $(s, \mu, \upsilon)\mapsto \partial_\mu \mathscr{U}^{(m)}(s, \mu)(\upsilon)$, \, $\partial_\upsilon \partial_\mu \mathscr{U}^{(m)}(s, \mu)(\upsilon)$, $m \geq1$, still denoted by $\partial_t \mathscr{U}^{(m)}$, $\partial_\mu \mathscr{U}^{(m)}$ and $\partial_\upsilon \partial_\mu \mathscr{U}^{(m)}$, that converges to $\partial_t \mathscr{U}(s,\mu)$,  $\partial_\mu \mathscr{U}(s,\mu)(\upsilon)$ and $\partial_\upsilon \partial_\mu \mathscr{U}(s,\mu)(\upsilon)$ uniformly on compact sets of $  [0,T) \times \mathcal{P}_2(\mathbb{R}^d)$ and $  [0,T) \times \mathcal{P}_2(\mathbb{R}^d) \times \mathbb{R}^d$ respectively. 

Hence, for any $R>0$, $(\partial_t + \mathscr{L}_{s_1})  \mathscr{U}^{(m)}(s_1,\mu^{N,n}_{s_1}) \textbf{1}_{|\mathbf{X}^{N, n}_{s_1}|\leq R }$ converges $a.s.$ to $(\partial_t + \mathscr{L}_{s_1})  \mathscr{U}(s_1, \mu^{N,n}_{s_1}) \textbf{1}_{|\mathbf{X}^{N, n}_{s_1}|\leq R }$ which in turn using the boundedness of the coefficients $b_i$ and $a_{i,j}$, the estimates \eqref{first:deriv:mes:U}, \eqref{cross:deriv:mes:upsilon:U} and the dominated convergence theorem yield
\begin{align}
\lim_{m\uparrow \infty} {\rm I}^{m, R}_{1,1} & = \mathbb{E}\Big[\int_0^t  (\partial_t + \mathscr{L}_{s_1})  \mathscr{U}(s_1, \mu^{N,n}_{s_1}) \textbf{1}_{|\mathbf{X}^{N, n}_{s_1}|\leq R } \, \d s_1 \Big],\label{esti:lim:approx:part_temp}
\end{align}

\noindent and, again by dominated convergence, 
\begin{align*}
  \lim_{R\uparrow \infty} \mathbb{E}\Big[\int_0^t  (\partial_t + \mathscr{L}_{s_1})  \mathscr{U}(s_1, \mu^{N,n}_{s_1}) \textbf{1}_{|\mathbf{X}^{N, n}_{s_1}|\leq R } \, \d s_1 \Big] = \mathbb{E}\Big[\int_0^t  (\partial_t + \mathscr{L}_{s_1})  \mathscr{U}(s_1,\mu^{N,n}_{s_1}) \, \d s_1 \Big] = 0, 
\end{align*}
 \noindent since $[0,t]\times \mathcal{P}_2(\mathbb{R}^d) \ni (s, \mu) \mapsto  \mathscr{U}(s,\mu) $ is the unique solution to the PDE \eqref{Backward:Kolmorgorov:PDE:semigroup}. Combining the two previous limits, we get $\lim_{R\uparrow \infty}\lim_{m\uparrow \infty} {\rm I}_{1,1}^{m,R} = 0$.
 
 To deal with ${\rm I}^{m,R}_{1,2}$, we use the estimates \eqref{first:deriv:mes:U}, \eqref{cross:deriv:mes:upsilon:U} as well as the boundedness of the coefficients to get 
 \begin{align*}
 \lim\sup_{R\uparrow \infty} \sup_{m\geq1}\Big| {\rm I}_{1, 2}^{m, R} \Big| & \leq  C \lim \sup_{R\uparrow \infty}\mathbb{E}\Big[\int_0^t \frac{1+M_2(\mu_{s_1}^{N,n})}{(t-s_1)^{1-\frac{\alpha}{2}}}\textbf{1}_{|\mathbf{X}^{N, n}_{s_1}|\geq R } \, \d s_1\Big] \\
 & \leq K \lim_{R\uparrow \infty}\mathbb{P}(\sup_{0\leq s\leq t}|\mathbf{X}_s^{N,n}|\geq R)^{\frac12} = 0.
 \end{align*}

Hence, combining the previous limits, namely, passing to the limit in $m$ along the aforementioned subsequence and then in $R$ gives
$$
\lim_{m} {\rm I}_1^{m} = 0.
$$

 A similar argument gives 
 $$
 \lim_{m}\E\Big[ \mathscr{U}^{(m)}(t, \mu^{N, n}_t) -  \mathscr{U}^{(m)}(0, \mu_0^N) \Big] = \mathbb{E}[ \mathscr{U}(t, \mu^{N,n}_t) -  \mathscr{U}(0, \mu_0^N)] = \mathbb{E}[\Phi(\mu^{N,n}_t)- \mathscr{U}(0, \mu_0^{N})].
 $$

Hence, passing to the limit in $m$ in the inequality \eqref{bound:Um:before:limit}, we obtain 
$$
|\mathbb{E}[\Phi(\mu^{N,n}_t) -  \mathscr{U}(0, \mu_0^{N})]| \leq K t^{\frac{\alpha}{2}}(h^{\frac{\eta}{2}}+N^{-1}).
$$

Similarly, passing to the limit as $m \uparrow \infty$ in \eqref{weak:error:bound:initial:condition} gives
$$
\Big|\E[\mathscr{U}(0, \mu^{N}_0) ]  - \mathscr{U}(0, \mu) \Big| = \lim_m \Big|\E[\mathscr{U}^{(m)}(0, \mu^{N}_0) ]  - \mathscr{U}^{(m)}(0, \mu) \Big| \leq \frac{K}{t^{1-\frac{\alpha}{2}}N}.
$$

The estimate \eqref{esti:sem:chaos} is eventually obtained by coming back to \eqref{key:decomposition:theorem:weak:strong:error:semigroup} and by gathering the two previous estimates.\\

\noindent \emph{Proof of \eqref{strong:conv:chaos}:}\\

We come back to the key decomposition \eqref{key:decomposition:theorem:weak:strong:error:semigroup}.\\

\emph{Step 1:}
We first provide a bound for $\mathbb{E}[|\mathscr{U}(0, \mu^{N}_0) - \mathscr{U}(0,\mu)|]$. It follows from the very definition of the flat derivative and the identity $\partial_\mu \mathscr{U}(0, \mu)(\upsilon) = \partial_{\upsilon} \frac{\delta \mathscr{U}}{\delta m}(0, \mu)(\upsilon)$ that for any probability measures $\mu, \nu \in \mathcal{P}_2(\mathbb{R}^d)$ and any coupling $\pi$ between $\mu$ and $\nu$ that
\begin{align*}
\mathscr{U}(0, \mu) - \mathscr{U}(0, \nu) & = \int_0^1 \int_{(\mathbb{R}^d)^2} \left\{\frac{\delta \mathscr{U}}{\delta m}(0, \lambda \mu + (1-\lambda) \nu)(y) -\frac{\delta \mathscr{U}}{\delta m}(0, \lambda \mu + (1-\lambda) \nu)(x) \right\} \, \pi(\d x, \d y) \, \d \lambda\\
& = \int_{[0,1]^2} \int_{(\mathbb{R}^d)^2} \partial_\mu \mathscr{U}(0, \lambda \mu + (1-\lambda) \nu)(\lambda' y+(1-\lambda')x).(x-y) \, \pi(\d x, \d y) \, \d \lambda \, \d \lambda'.
\end{align*}

Hence, from \eqref{first:deriv:mes:U} and the previous identity, we deduce
$$
|\mathscr{U}(0, \mu) - \mathscr{U}(0, \nu)| \leq C t^{\frac{\alpha-1}{2}} (1+M_2(\mu)+M_2(\nu))\mathcal{W}_2(\mu, \nu)
$$

\noindent which in turn implies
\begin{equation}\label{strong:error:initial:condition}
\mathbb{E}[|\mathscr{U}(0, \mu^{N}_0) - \mathscr{U}(0,\mu)|] \leq C t^{\frac{\alpha-1}{2}} \mathbb{E}[\mathcal{W}_2(\mu_0^N, \mu)^2]^{\frac12}.
\end{equation}

\noindent \emph{Step 2:}
Recalling It\^o's rule applied to 
 $(\mathscr{U}^{(m)}(s,\mu^{N, n}_s))_{s \in [0,t]}$, we obtain
\begin{equation}\label{key:decomposition:expected:Um:WithAbsVal}
\begin{aligned}
&\lvert \mathscr{U}^{(m)}(t,\mu^{N,n}_t)- \mathscr{U}^{(m)}(0,\mu^N_0)\rvert\leq \Big| \int_0^t(\partial_t+\mathscr{L}_{s_1}) \mathscr{U}^{(m)}(s_1,\mu^{N,n}_{s_1}) \, \d s_1 \Big|\\\
&+\frac{1}{N}\sum_{i=1}^N\int_0^t \lvert b(k_n(s_1),X^{i,n}_{k_n(s_1)},\mu^{N,n}_{k_n(s_1)})-b(s_1,X^{i,n}_{s_1},\mu^{N,n}_{s_1})\rvert\lvert \partial_\mu \mathscr{U}^{(m)}(s_1,\mu^{N,n}_{s_1})(X^{i,n}_{s_1})\rvert \, \d s_1\\
&+\frac{1}{N}\sum_{i=1}^N\int_0^t \lvert a(k_n(s_1),X^{i,n}_{k_n(s_1)},\mu^{N,n}_{k_n(s_1)})-a(s_1,X^{i,n}_{s_1},\mu^{N,n}_{s_1})\rvert\lvert\partial_\upsilon \partial_\mu \mathscr{U}^{(m)}(s_1,\mu^{N,n}_{s_1})(X^{i,n}_{s_1})\rvert \, \d s_1\\
&+\Bigg|\int_0^t \frac{1}{N}\sum_{i=1}^N \partial_\mu \mathscr{U}^{(m)}(s_1,\mu^{N,n}_{s_1})(X^{i,n}_{s_1})\sigma(k_n(s_1),X^{i,n}_{s_1},\mu^{N,n}_{k_n(s_1)})\, \d W^{(i)}_{s_1}\Bigg|\\
&+\frac{1}{2N^2}\sum_{i=1}^N\int_0^t \lvert a(k_n(s_1),X^{i,n}_{k_n(s_1)},\mu^{N,n}_{k_n(s_1)})\rvert\lvert\partial^2_\mu \mathscr{U}^{(m)}(s_1,\mu^{N,n}_{s_1})(X^{i,n}_{s_1},X^{i,n}_{s_1}))\rvert \, \d s_1\\
:&={\rm J}^m_1+ {\rm J}^m_2 + {\rm J}^m_3+ {\rm J}^m_4+ {\rm J}^m_5.
\end{aligned}
\end{equation}

From similar arguments as those used in the last step of the proof of \eqref{esti:sem:chaos}, we get
$$
\lim_{m} \mathbb{E}[|{\rm J}^m_1|] = 0
$$

\noindent and 
$$
\lim_{m} \mathbb{E}[\lvert \mathscr{U}^{(m)}(t,\mu^{N,n}_t)- \mathscr{U}^{(m)}(0,\mu^N_0)\rvert] = \mathbb{E}[|\Phi(\mu^{N,n}_t) - \mathscr{U}(0, \mu_0^N)|].
$$

As in the proof of \eqref{esti:sem:chaos}, using the H\"older regularity of $a_{i, j}$ and $b_i$ as well as \eqref{first:deriv:mes:U} and \eqref{cross:deriv:mes:upsilon:U}, we get
$$
\E [|{\rm J}^m_2|]+\E [|{\rm J}^m_3|]\leq C t^{\frac{\alpha}{2}}h^{\frac{\eta}{2}}.
$$

Using the Cauchy-Schwarz inequality and \eqref{first:deriv:mes:U}, we obtain
\begin{align*}
\mathbb{E}[|{\rm J}_4^{m}|]
& \leq  \frac{C}{\sqrt{N}} \mathbb{E}\Big[\Big(\int_0^t \frac{1+M_2(\mu_{s_1}^{N,n})^2 +|X_{s_1}^{i,n}|^2}{(t-s_1)^{1-\alpha}} \, \d s_1 \Big)^{\frac{1}{2}}\Big] \leq C \frac{t^{\frac{\alpha}{2}}}{\sqrt{N}}.
\end{align*}

The boundedness of $a$ together with \eqref{cross:deriv:mes:upsilon:U} yields
$$
\E[|{\rm J}^m_5|] \leq C \frac{t^{\frac{\alpha}{2}}}{N}. 
$$

Gathering the previous estimates and limits, we conclude
$$
\mathbb{E}[|\Phi(\mu^{N,n}_t) - \mathscr{U}(0, \mu_0^N)|] \leq C t^{\frac{\alpha}{2}} \Big(  h^{\frac{\eta}{2}} + \frac{1}{\sqrt{N}} \Big)
$$

\noindent which together with \eqref{strong:error:initial:condition} allows to complete the proof of \eqref{strong:conv:chaos}.

\subsection{Proof of Theorem \ref{thm:error:upper:bound:smooth:case}}\label{proof:thm:error:upper:bound:smooth:case}

As in the proof of Theorem \ref{Thm:Conv:EulerMaru:semigroup}, we rely on the key identity \eqref{key:decomposition:theorem:weak:strong:error:semigroup} and study separately the two terms.

\noindent \emph{Step 1:}

From the identity \eqref{init:error:weak:prop:chaos} which is valid for $\mathscr{U}$ instead of $\mathscr{U}^{(m)}$, the uniform boundedness of $\partial^{(i , j)}_\mu \mathscr{U}(s,\mu)$ and Lemma 2.5 in \cite{chassagneux:szpruch:tse}, we get
\begin{equation}\label{weak:error:bound:initial:condition:smooth:setting}
\Big|\E[\mathscr{U}(0, \mu^{N}_0) ]  - \mathscr{U}(0, \mu) \Big| \leq \frac{C}{N}.
\end{equation}

It thus remains to prove that
\begin{equation}\label{weak:error:bound:diff:PDE:smooth:setting}
\sup_{0\leq t \leq T}|\mathbb{E}[\Phi(\mu_t^{N,n}) - \mathscr{U}(0,\mu_0^N)]| \leq C (N^{-1}+h).
\end{equation}

Now, from It\^o's rule applied to $(\mathscr{U}(s,\mu_s^{N,n}))_{0\leq s\leq t}$, it holds
 \begin{equation}\label{itos:rule:test:function:smooth:setting}
\begin{aligned}
& \mathbb{E}[ \Phi(\mu^{N, n}_{t})  - \mathscr{U}(0,\mu_0^N) ] \\
& = 
\int_0^t  \E\Big[ [b_i(k_n(s_1), X^{1, n}_{k_n(s_1)}, \mu^{N, n}_{k_n(s_1)}) - b_i(s_1, X^{1, n}_{s_1}, \mu^{N, n}_{s_1})]  \partial^i_\mu \mathscr{U}(s_1, \mu^{N, n}_{s_1})(X^{1, n}_{s_1}) \Big] \, \d s_1  \\
&  + \int_0^t \frac12 \E\Big[ [ a_{i, j}(k_n(s_1), X^{1, n}_{k_n(s_1)}, \mu^{N, n}_{k_n(s_1)}) -  a_{i, j}(s_1, X^{1, n}_{s_1}, \mu^{N, n}_{s_1})]  \partial_{\upsilon_j} \partial^{i}_\mu  \mathscr{U}(s_1, \mu^{N, n}_{s_1})(X^{1, n}_{s_1})  \Big] \, \, \d s_1\\
& + \int_0^t \frac{1}{2N} \E\left[ a_{i,j}(k_n(s_1), X^{1, n}_{k_n(s_1)}, \mu^{N, n}_{k_n(s_1)})  \partial^{(i,j)}_\mu \mathscr{U}(s_1, \mu^{N, n}_{s_1})(X^{1, n}_{s_1}, X^{1, n}_{s_1}) \right] \, \d s_1. 
\end{aligned}
\end{equation}

We now decompose the integrand in the $\d s_1$-integral of the first term in the right-hand side of \eqref{itos:rule:test:function:smooth:setting} as follows
\begin{align*}
\E\Big[ & [b_i(k_n(s_1), X^{1, n}_{k_n(s_1)}, \mu^{N, n}_{k_n(s_1)})  - b_i(s_1, X^{1, n}_{s_1}, \mu^{N, n}_{s_1})]  \partial^i_\mu \mathscr{U}(s_1, \mu^{N, n}_{s_1})(X^{1, n}_{s_1}) \Big] \\
& = \E\Big[ [b_i(k_n(s_1), X^{1, n}_{k_n(s_1)}, \mu^{N, n}_{k_n(s_1)}) - b_i(s_1, X^{1, n}_{s_1}, \mu^{N, n}_{s_1})] \\
& \quad \times [ \partial^i_\mu \mathscr{U}(s_1, \mu^{N, n}_{s_1})(X^{1, n}_{s_1}) - \partial^i_\mu \mathscr{U}(s_1, \mu^{N, n}_{k_n(s_1)})(X^{1, n}_{k_n(s_1)})] \Big]\\
& \quad + \E\Big[ [b_i(k_n(s_1), X^{1, n}_{k_n(s_1)}, \mu^{N, n}_{k_n(s_1)}) - b_i(s_1, X^{1, n}_{s_1}, \mu^{N, n}_{s_1})]   \partial^i_\mu \mathscr{U}(s_1, \mu^{N, n}_{k_n(s_1)})(X^{1, n}_{k_n(s_1)}) \Big]\\
& =: {\rm E}^{i, n, N}_1(s_1) + {\rm E}^{i, n, N}_2(s_1).
\end{align*}

Using the Lipschitz regularity of the mappings $(s, x) \mapsto b_i(s, x, \mu)$ (uniformly in $\mu$) and $(\mu, \upsilon) \mapsto \partial_\mu^i \mathscr{U}(s,\mu)(\upsilon)$, recalling that $\partial_\mu b_i(s, x, \mu)(\upsilon)$, $\partial_\upsilon \partial_\mu^i \mathscr{U}(s, \mu)(\upsilon)$ and $\partial_\mu \partial_\mu^i \mathscr{U}(s, \mu)(\upsilon)$ are uniformly bounded, we get 
\begin{align*}
|{\rm E}^{i, n, N}_1(s_1)| & \leq K \mathbb{E}\Big[ [ (s_1-k_n(s_1)) + |X^{1, n}_{s_1} - X^{1, n}_{k_n(s_1)}| + \mathcal{W}_1(\mu^{N, n}_{s_1}, \mu^{N,n}_{k_n(s_1)}) ] \\
& \quad \times [|X^{1, n}_{s_1} - X^{1, n}_{k_n(s_1)}| +  \mathcal{W}_1(\mu^{N, n}_{s_1}, \mu^{N,n}_{k_n(s_1)})] \Big]. 
\end{align*}

It then follows from the standard inequality $\mathcal{W}_1(\mu^{N, n}_{s_1}, \mu^{N,n}_{k_n(s_1)})\leq \frac{1}{N}\sum_{l=1}^{N} |X^{l, n}_{s_1}-X^{l,n}_{k_n(s_1)}|$ and the standard estimate $\sup_{0\leq s_1 \leq T}\max_{1\leq l \leq N} \mathbb{E}[|X^{l, n}_{s_1}-X^{l, n}_{k_n(s_1)}|^2]\leq K h $ that
$$
\sup_{0\leq s_1\leq T}\max_{1\leq i \leq d}|{\rm E}^{i, n, N}_1(s_1)| \leq K h.
$$

We deal with ${\rm E}^{i, n, N}_2(s_1)$ by using It\^o's rule together with the uniform boundedness of the derivatives $\partial_s b_i(s, x, \mu)$, $\partial_x b_i(s, x, \mu)$, $\partial^2_x b_i(s, x, \mu)$,  $\partial_\mu b_i(s, x, \mu)$, $\partial_\upsilon \partial_\mu b_i(s, x, \mu)$, $\partial^2_\mu b_i(s, x, \mu)$ and $ \partial^i_\mu \mathscr{U}(s_1, \mu)$ to deduce that
$$
\sup_{0\leq s_1 \leq T} \max_{1\leq i \leq d}|{\rm E}^{i, n, N}_2(s_1)| \leq K h.
$$

Combining the two previous estimates, we conclude that
\begin{equation}\label{first:term:estimate:weak:error}
\begin{aligned}
\Big| \int_0^t  \E\Big[ [b_i(k_n(s_1), X^{1, n}_{k_n(s_1)}, \mu^{N, n}_{k_n(s_1)}) & - b_i(s_1, X^{1, n}_{s_1}, \mu^{N, n}_{s_1})]  \partial^i_\mu \mathscr{U}(s_1, \mu^{N, n}_{s_1})(X^{1, n}_{s_1}) \Big] \, \d s_1\Big|\\ & \leq K t h.
\end{aligned}
\end{equation}

In order to deal with the second term in \eqref{itos:rule:test:function:smooth:setting}, we remark that the derivatives $\partial_s a_{i, j}(s, x, \mu)$, $\partial_x a_{i, j}(s, x, \mu)$, $\partial^2_x a_{i, j}(s, x, \mu)$,  $\partial_\mu a_{i, j}(s, x, \mu)(\upsilon)$, $\partial_\upsilon \partial_\mu a_{i, j}(s, x, \mu)(\upsilon)$, $\partial^2_\mu a_{i, j}(s, x, \mu)(\upsilon)$ are of affine growth under our current assumption and that the random variable $\partial_{\upsilon_{j}}\partial^i_\mu \mathscr{U}(s_1, \mu^{N, n}_{s_1})(X^{1, n}_{s_1})$ is bounded. Then, by a similar decomposition as for the second term and from similar computations, we deduce that
\begin{equation}\label{second:term:estimate:weak:error}
\begin{aligned}
\Big| \int_0^t  \E\Big[ [a_{i, j}(k_n(s_1), X^{1, n}_{k_n(s_1)}, \mu^{N, n}_{k_n(s_1)}) & - a_{i, j}(s_1, X^{1, n}_{s_1}, \mu^{N, n}_{s_1})]  \partial_{\upsilon_{j}}\partial^i_\mu \mathscr{U}(s_1, \mu^{N, n}_{s_1})(X^{1, n}_{s_1}) \Big] \, \d s_1\Big|\\ & \leq K t h.
\end{aligned}
\end{equation}

We now come back to \eqref{itos:rule:test:function:smooth:setting}, use \eqref{first:term:estimate:weak:error}, \eqref{second:term:estimate:weak:error} and the boundedness of $\partial^{(i,j)}_\mu \mathscr{U}(s, \mu)(\upsilon, \upsilon)$. Hence, 
$$
\sup_{0\leq t \leq T} |\mathbb{E}[ \Phi(\mu^{N, n}_{t})  - \mathscr{U}(0,\mu_0^N) ] | \leq K T (h+N^{-1}).
$$

The previous bound combined with \eqref{weak:error:bound:initial:condition:smooth:setting} allows to conclude the proof.

\bibliography{bibliography.bib}

\end{document}